\newtheorem{Theorem}{Theorem}[section]
\newtheorem{Proposition}[Theorem]{Proposition}
\newtheorem{Lemma}[Theorem]{Lemma}
\newtheorem{Corollary}[Theorem]{Corollary}
\theoremstyle{remark}
\newtheorem{Remark}[Theorem]{Remark}
\theoremstyle{definition}
\newtheorem{Definition}[Theorem]{Definition}
\begin{document}
\bibliographystyle{plain}
\title{Sobolev-Lorentz spaces in the Euclidean setting and counterexamples}
\author[\c{S}. Costea]{\c{S}erban Costea}
\address{\c{S}. Costea\\
Department of Mathematics\\
University of Pisa\\
Largo Bruno Pontecorvo 5\\
I-56127 Pisa, Italy}
\email{secostea@@mail.dm.unipi.it}

\keywords{Sobolev spaces, Lorentz spaces, Morrey embedding theorems}
\subjclass[2010]{Primary: 31C45, 46E35}

\thanks{The author was partly supported by the Project PRIN 2010-11 "Calculus of Variations" and by the University of Pisa via grant PRA-2015-0017.}

\begin{abstract}
This paper studies the inclusions between different Sobolev-Lorentz spaces $W^{1,(p,q)}(\Omega)$ defined on open sets $\Omega \subset {\mathbf{R}^n},$ where $n \ge 1$ is an integer, $1<p<\infty$ and $1 \le q \le \infty.$ We prove that if $1 \le q<r \le \infty,$ then $W^{1,(p,q)}(\Omega)$ is strictly included in $W^{1,(p,r)}(\Omega).$

We show that although $H^{1,(p,\infty)}(\Omega) \subsetneq W^{1,(p,\infty)}(\Omega)$ where $\Omega \subset {\mathbf{R}}^n$ is open and $n \ge 1,$ there exists a partial converse. Namely, we show that if a function $u$ in $W^{1,(p,\infty)}(\Omega), n \ge 1$ is such that $u$ and its distributional gradient $\nabla u$ have absolutely continuous $(p,\infty)$-norm, then $u$ belongs to $H^{1,(p,\infty)}(\Omega)$ as well.

We also extend the Morrey embedding theorem to the Sobolev-Lorentz spaces $H_{0}^{1,(p,q)}(\Omega)$ with $1 \le n<p<\infty$ and $1 \le q \le \infty.$ Namely, we prove that the Sobolev-Lorentz spaces $H_{0}^{1,(p,q)}(\Omega)$ embed into the space of H\"{o}lder continuous functions on $\overline{\Omega}$ with exponent $1-\frac{n}{p}$ whenever $\Omega \subset {\mathbf{R}}^n$ is open, $1 \le n<p<\infty,$ and $1 \le q \le \infty.$

\end{abstract}

\maketitle
\section{Introduction}

 In this paper we study the Sobolev-Lorentz spaces in the Euclidean setting and the inclusions between them. This paper is motivated by the results obtained in my 2006 PhD thesis \cite{Cos0} and in my book \cite{Cos3}. There I studied the Sobolev-Lorentz spaces and the associated Sobolev-Lorentz capacities in the Euclidean setting for $n \ge 2.$ The restriction on $n$ there was due to the the fact that I studied the $n,q$-capacity for $n>1.$

 The Sobolev-Lorentz spaces have also been studied by Cianchi-Pick in \cite{CiPi1} and \cite{CiPi2}, by Kauhanen-Koskela-Mal{\'{y}} in \cite{KKM}, and by Mal{\'{y}}-Swanson-Ziemer in \cite{MSZ}.

 The classical Sobolev spaces were studied by Gilbarg-Trudinger in \cite{GT}, Maz'ja in \cite{Maz}, Evans in \cite{Eva}, Heinonen-Kilpel\"{a}inen-Martio in \cite{HKM}, and by Ziemer in \cite{Zie}.

 The Lorentz spaces were studied by Bennett-Sharpley in \cite{BS}, Hunt in \cite{Hun}, and by Stein-Weiss in \cite{SW}.

 The Newtonian Sobolev spaces in the metric setting were studied by Shanmugalingam in \cite{Sha1} and \cite{Sha2}. See also Heinonen \cite{Hei}. Costea-Miranda studied the Newtonian Lorentz Sobolev spaces and the corresponding global $p,q$-capacities in \cite{CosMir}.

 There are several other definitions of Sobolev-type spaces in the metric setting
 when $p=q$; see Haj{\l}asz \cite{Haj1}, \cite{Haj2}, Heinonen-Koskela \cite{HeK}, Cheeger \cite{Che}, and Franchi-Haj{\l}asz-Koskela \cite{FHK}. It has been shown that under reasonable hypotheses, the majority of these definitions yields the same space; see Franchi-Haj{\l}asz-Koskela \cite{FHK} and Shanmugalingam \cite{Sha1}.

 The Sobolev-Lorentz relative $p,q$-capacity was studied in the Euclidean setting by Costea (see \cite{Cos0}, \cite{Cos1} and \cite{Cos3}) and by Costea-Maz'ya \cite{CosMaz}.
 The Sobolev $p$-capacity was studied by Maz'ya \cite{Maz} and by
 Heinonen-Kilpel\"{a}inen-Martio \cite{HKM} in ${\mathbf{R}}^n$ and by J. Bj{\"o}rn \cite{Bjo}, Costea \cite{Cos2} and Kinnunen-Martio \cite{KiM1} and \cite{KiM2} in metric spaces.

 The Sobolev-Lorentz spaces can be also studied in the Euclidean setting for $n=1.$ We do it in this paper. Many of the results on Sobolev-Lorentz spaces that we obtained in \cite{Cos0} and \cite{Cos3} in dimension $n \ge 2$ were extended here to the case $n=1.$

 In Section \ref{Section Lorentz spaces} we start by presenting some of the basic properties of the Lorentz spaces $L^{p,q}(\Omega; {\mathbf{R}}^m),$ where $\Omega \subset {\mathbf{R}}^n$ is open, $n, m \ge 1$ are integers, $1<p<\infty$ and $1 \le q \le \infty.$

 It is known that $L^{p,q}((0, \Omega_n r^n)) \subsetneq L^{p,s}((0, \Omega_n r^n)).$ We see this in Theorem \ref{Lpr stricly included in Lps} by constructing a function $u$ in $L^{p,s}((0, \Omega_n r^n)) \setminus L^{p,q}((0, \Omega_n r^n)).$ Here $r>0,$ $n \ge 1$, $1<p<\infty$ and $1 \le q<s \le \infty.$

 This function $u$ is used in Theorem \ref{Lpr strictly included in Lps via grad of smooth fns} to construct a radial function $v$ that is smooth in the punctured ball $B^{*}(0,r)$ such that $|\nabla v|$ is in $L^{p,s}(B(0,r)) \setminus L^{p,q}(B(0,r)).$ Later it will be shown in Theorem \ref{W1pr strictly included in W1ps} that $v$ is in $W^{1,(p,s)}(B(0,r)) \setminus W^{1,(p,q)}(B(0,r)).$
 This shows that the inclusion $W^{1,(p,q)}(B(0,r)) \subset W^{1,(p,s)}(B(0,r))$ is strict whenever $r>0,$ $n \ge 1$, $1<p<\infty$ and $1 \le q<s \le \infty.$

 In Section \ref{Section Sobolev Lorentz spaces} we revisit many of the results from my PhD thesis \cite[Chapter V]{Cos0} and from my book \cite[Chapter 3]{Cos3} and we extend them to the case $n=1.$ We improve some of the old results from \cite[Chapter V]{Cos0} and from \cite[Chapter 3]{Cos3}.

 We also obtain some new results in this section. Among them we mention the case $q=\infty$ for Theorems \ref{H=W revisited} and \ref{H=H_0 revisited} (see the discussion below) as well as the strict inclusion $W^{1,(p,q)}(B(0,r)) \subsetneq W^{1,(p,s)}(B(0,r))$ that we discussed above. As before, $r>0,$ $n \ge 1$, $1<p<\infty$ and $1 \le q<s \le \infty$ (see Theorem \ref{W1pr strictly included in W1ps}).

 For $n \ge 2,$ we proved in Costea \cite{Cos0} and \cite{Cos3} (by using partition of unity and convolution) that $H^{1,(p,q)}(\Omega)=W^{1,(p,q)}(\Omega)$ whenever $1<p<\infty$ and $1 \le q<\infty.$
 The partition of unity and convolution technique used there is similar to the techniques used by
 Ziemer in \cite{Zie} and by Heinonen-Kilpel\"{a}inen-Martio in \cite{HKM}.

 We proved in \cite{Cos0} and \cite{Cos3} (for $n \ge 2$) that $H^{1,(p,\infty)}(\Omega) \subsetneq W^{1,(p,\infty)}(\Omega).$ Once we constructed a function $u \in W^{1,(p, \infty)}(\Omega)$ such that its distributional gradient $\nabla u$ did not have an absolutely continuous $(p,\infty)$-norm, we proved there that $u$ was not in $H^{1,(p,\infty)}(\Omega).$

 In Section \ref{Section Sobolev Lorentz spaces} of this paper, Proposition \ref{up in Lpinfty loc and in W1pinfty loc minus H1pinfty} and Theorem \ref{H1pinfty subsetneq W1pinfty} show that $H^{1,(p,\infty)}(\Omega) \subsetneq W^{1,(p,\infty)}(\Omega)$ for $n \ge 1.$ In this paper we also give a partial converse. Namely, we show in Theorem \ref{H=W revisited} that if a function $u$ in $W^{1,(p,q)}(\Omega), n \ge 1,$ $1 \le q \le \infty$ is such that $u$ and its distributional gradient $\nabla u$ have absolutely continuous $(p,q)$-norm, then $u$ belongs to $H^{1,(p,q)}(\Omega)$ as well. This result is new for $q=\infty$ and $n \ge 1$ and improves a result from \cite{Cos0} and \cite{Cos3}, proved there for $n \ge 2$ and $1 \le q<\infty.$ We proved this result via a partition of unity and convolution argument, because convolution and partition of unity work well on functions $u$ that have absolutely continuous $(p,q)$-norm along with their distributional gradients $\nabla u.$

 In Theorem \ref{H=H_0 revisited} we show that if a function $u$ in $W^{1,(p,q)}({\mathbf{R}}^n), n \ge 1$ is such that $u$ and its distributional gradient $\nabla u$ have absolutely continuous $(p,q)$-norm, $1 \le q \le \infty,$ then $u$ belongs to $H_{0}^{1,(p,q)}({\mathbf{R}}^n)$ as well. This result is new when $q=\infty$ and $n \ge 1$ and improves a result from \cite{Cos0} and \cite{Cos3}, proved there for $n \ge 2$ and $1 \le q<\infty.$

 In Section \ref{Section Morrey embedding theorems} (among other things) we prove the Morrey embedding theorem for the Sobolev-Lorentz spaces $H_{0}^{1,(p,q)}(\Omega).$

 For $n=1,$ we prove in Theorem \ref{Holder 1/p' continuity for u in W1pq n equal 1} that if $\Omega \subset {\mathbf{R}}$ is an open interval, then $H_{0}^{1,(p,q)}(\Omega)$ and $W^{1,(p,q)}(\Omega)$ embed into the space of H\"{o}lder continuous functions in $\overline{\Omega}$ with exponent $1-\frac{1}{p}.$

 For $1<n<p<\infty,$ we prove in Theorem \ref{Morrey embedding 1<n<p} (among other things) that the spaces $H_{0}^{1,(p,q)}(\Omega)$ and $C_{0}(\Omega) \cap W^{1,(p,q)}(\Omega)$ embed into the space $C^{0, 1-\frac{n}{p}}(\overline{\Omega})$ of H\"{o}lder continuous functions in $\overline{\Omega}$ with exponent $1-\frac{n}{p}$ whenever $\Omega \subset {\mathbf{R}}^n$ is open
 and $1 \le q \le \infty.$

 Since we deal with functions in $H_{0}^{1,(p,q)}(\Omega)$ or in $C_{0}(\Omega) \cap W^{1,(p,q)}(\Omega)$ when $1<n<p<\infty$ and $1 \le q \le \infty,$ no regularity assumptions on $\partial \Omega$ are needed.

 When $1<n<p<\infty,$ we first prove the Morrey embedding $C_{0}(\Omega) \cap W^{1,(p,q)}(\Omega) \hookrightarrow C^{0, 1-\frac{n}{p}}(\overline{\Omega}).$ The embedding $H_{0}^{1,(p,q)}(\Omega) \hookrightarrow C^{0, 1-\frac{n}{p}}(\overline{\Omega})$ follows afterwards, after an approximation argument with functions from $C_{0}^{\infty}(\Omega).$ We also rely on the well-known Poincar\'{e} inequality in the Euclidean setting and we invoke the classical embedding for $1<n<s<p<\infty,$ proved by Evans in \cite{Eva} and by Gilbarg-Trudinger in \cite{GT}.

 \section{Notations}

 We recall the standard notation to be used throughout this paper.
 Throughout this paper, $C$ will denote a positive constant whose value
 is not necessarily the same at each occurrence; it may vary even within
 a line. $C(a,b, \ldots)$ is a constant that depends only on the parameters $a,b, \cdots.$

 Throughout this paper $\Omega$ will denote a nonempty open subset of ${\mathbf{R}}^n,$ while $dx=d m_n(x)$ will denote the Lebesgue $n$-measure in ${\mathbf{R}}^n,$ where $n \ge 1$ is integer.
 For $E \subset {\mathbf{R}}^n,$ the boundary, the closure, and the complement of $E$ with
 respect to ${\mathbf{R}}^n$ will be denoted by $\partial E,$ $\overline{E},$ and
 $\mathbf{R}^n \setminus E,$ respectively, while $|E|=\int_{E} dx$
 will denote the measure of $E$ whenever $E$ is measurable; $\mbox{diam }E$ is the Euclidean
 diameter of $E$ and $E \subset \subset F$ means that $\overline{E}$ is a compact subset of $F.$

 Moreover, $B(a,r)= \{ x \in \mathbf{R}^n: |x-a|<r \}$
 is the open ball with center $a \in \mathbf{R}^n$ and radius $r>0,$
 $B^{*}(a,r)=\{ x \in \mathbf{R}^n: 0<|x-a|<r \}$ is the punctured
 open ball with center $a \in \mathbf{R}^n$ and radius $r>0,$
 while $\overline{B}(a,r)= \{ x \in \mathbf{R}^n: |x-a| \le r \}$ is the
 closed ball with center $a \in \mathbf{R}^n$ and radius $r>0.$

 For two sets $A, B \subset \mathbf{R}^n,$ we define $\mbox{dist}(A,B),$ the distance
 between $A$ and $B,$ by
 \begin{equation*}
 \mbox{dist}(A,B)=\inf_{a \in A, b \in B} |a-b|.
 \end{equation*}

 For $n \ge 1$ integer, $\Omega_n$ denotes the Lebesgue measure of the
 $n$-dimensional unit ball. (That is, $\Omega_n=|B(0,1)|$). For $n \ge 2$ integer,
 $\omega_{n-1}$ denotes the spherical measure of the $n-1$-dimensional sphere;
 thus, $\omega_{n-1}=n \Omega_n$ for every integer $n \ge 2.$

 For $\Omega \subset \mathbf{R}^n,$ $C(\Omega)$ is the set of all
 continuous functions $u: \Omega \rightarrow {\mathbf{R}},$ while
 $C(\overline{\Omega})$ is the set of all continuous functions
 $u: \overline{\Omega} \rightarrow {\mathbf{R}}.$ Moreover,
 for a measurable $u: \Omega \rightarrow {\mathbf{R}},$ $\mbox {supp } u$ is the
 smallest closed set such that $u$ vanishes outside $\mbox {supp } u.$
 We also define
 \begin{eqnarray*}
 C^{k}(\Omega)&=&\{ \varphi: \Omega \rightarrow \mathbf{R}: \mbox{ the $k$th-derivative of $\varphi$ is continuous} \}\\
 C_{0}^{k}(\Omega)&=&\{\varphi \in C^{k}(\Omega): \mbox{supp } \varphi \subset \subset \Omega \}\\
 C^{\infty}(\Omega)&=& \bigcap_{k=1}^{\infty} C^{k}(\Omega)\\
 C_{0}^{\infty}(\Omega)&=& \{ \varphi \in C^{\infty}(\Omega): \mbox{supp } \varphi \subset \subset \Omega\}.
 \end{eqnarray*}
 For a function $\varphi \in C^{\infty}(\Omega)$ we write
 \begin{equation*}
 \nabla \varphi=(\partial_1 \varphi, \partial_2 \varphi, \ldots, \partial_n \varphi)
 \end{equation*}
 for the gradient of $\varphi.$

 Let $f: \Omega \rightarrow {\mathbf{R}}$ be integrable. For $E \subset
 \Omega$ measurable with $0<|E|<\infty,$ we define
 $$f_{E}=\frac{1}{|E|} \int_{E} f dx.$$

 For a measurable vector-valued function $f=(f_1, \ldots, f_m):
 \Omega \rightarrow \mathbf{R}^m,$ we let
 \begin{equation*}\label{def abs val of a vector function}
 |f|=\sqrt{f_1^2+f_2^2+\ldots+f_m^2}.
 \end{equation*}
 $L^{\infty}(\Omega; \mathbf{R}^m)$ denotes the space of essentially
 bounded measurable functions $u: \Omega \rightarrow \mathbf{R}^m$
 with
 \begin{equation*}
 ||u||_{L^{\infty}(\Omega)}=\mbox{ess} \sup |u| <\infty.
 \end{equation*}

\section{Lorentz Spaces} \label{Section Lorentz spaces}

\subsection{Definitions and basic properties}
 Let $f:\Omega \rightarrow \mathbf{R}$ be a measurable function. We
 define $\lambda_{[f]},$ the \textit{distribution function} of $f$ as follows (see Bennett-Sharpley \cite[Definition II.1.1]{BS} and Stein-Weiss \cite[p.\ 57]{SW}):
 $$\lambda_{[f]}(t)=|\{x \in \Omega: |f(x)| > t \}|, \qquad t \ge 0.$$
 We define $f^{*},$  the \textit{nonincreasing rearrangement} of $f$ by
 $$f^{*}(t)=\inf\{v: \lambda_{[f]}(v) \le t \}, \quad t \ge 0.$$
 (See Bennett-Sharpley \cite[Definition II.1.5]{BS} and Stein-Weiss \cite[p.\ 189]{SW}).
 We notice that $f$ and $f^{*}$ have the same distribution function.
 Moreover, for every positive $\alpha$ we have $(|f|^{\alpha})^{*}=(|f|^{*})^{\alpha}$
 and if $|g|\le |f|$ a.e. on $\Omega,$ then $g^{*}\le f^{*}.$
 (See Bennett-Sharpley \cite[Proposition II.1.7]{BS}).
 We also define $f^{**}$, the \textit{maximal function} of $f^{*}$ by
 $$f^{**}(t)=m_{f^{*}}(t)=\frac{1}{t} \int_{0}^{t} f^{*}(s) ds, \quad t >0.$$
 (See Bennett-Sharpley \cite[Definition II.3.1]{BS} and Stein-Weiss \cite[p.\ 203]{SW}).

Throughout this paper, we denote by $p'$ the H\"{o}lder
conjugate of $p \in [1,\infty],$ that is
$$p'=\left\{ \begin{array}{ll}
 \infty & \mbox{if $p=1$}\\
 \frac{p}{p-1} & \mbox{if $1<p<\infty$}\\
 1 & \mbox{if $p=\infty$}.
 \end{array}
 \right.
$$

The \textit{Lorentz space} $L^{p,q}(\Omega),$ $1<p<\infty,$ $1\le
q\le \infty,$ is defined as follows:
$$L^{p,q}(\Omega)= \{f: \Omega \rightarrow \mathbf{R}: f \mbox { is measurable and }
||f||_{L^{p,q}(\Omega)}<\infty\},$$
where
$$||f||_{L^{p,q}(\Omega)}=||f||_{p,q}=\left\{ \begin{array}{lc}
\left( \int_{0}^{\infty} (t^{\frac{1}{p}}f^{*}(t))^q \, \frac{dt}{t}
\right)^{\frac{1}{q}} & 1 \le q < \infty \\
\sup_{t>0} t \lambda_{[f]}(t)^{\frac{1}{p}}=\sup_{s>0}
s^{\frac{1}{p}} f^{*}(s) & q=\infty.
\end{array}
\right.
$$
(See Bennett-Sharpley \cite[Definition IV.4.1]{BS} and Stein-Weiss \cite[p.\ 191]{SW}). If $1 \le
q\le p,$ then $||\cdot||_{L^{p,q}(\Omega)}$ already represents a
norm, but for $p < q \le \infty$ it represents a quasinorm that is
equivalent to the norm $||\cdot||_{L^{(p,q)}(\Omega)},$ where
$$||f||_{L^{(p,q)}(\Omega)}=||f||_{(p,q)}=\left\{ \begin{array}{lc}
\left( \int_{0}^{\infty} (t^{\frac{1}{p}}f^{**}(t))^q \, \frac{dt}{t} \right)^{\frac{1}{q}} & 1 \le q < \infty \\
\sup_{t>0} t^{\frac{1}{p}} f^{**}(t) & q=\infty.
\end{array}
\right.
$$
(See Bennett-Sharpley \cite[Definition IV.4.4]{BS}).

Namely, from Lemma IV.4.5 in Bennett-Sharpley \cite{BS}  we have that
$$||f||_{L^{p,q}(\Omega)} \le ||f||_{L^{(p,q)}(\Omega)} \le \frac {p}{p-1} ||f||_{L^{p,q}(\Omega)}$$
for every $1\le q \le \infty.$

For a measurable vector-valued function $f=(f_1,\ldots, f_m): \Omega
\rightarrow  \mathbf{R}^m$ we say that $f \in L^{p,q}(\Omega;
\mathbf{R}^m)$ if and only if $f_i \in L^{p,q}(\Omega)$ for
$i=1,2,\ldots, m,$ if and only if $|f| \in L^{p,q}(\Omega)$ and we
define
\begin{equation*}
||f||_{L^{p,q}(\Omega;\mathbf{R}^m)}=||\,|f|\,||_{L^{p,q}(\Omega)}.
\end{equation*}
Similarly
\begin{equation*} ||f||_{L^{(p,q)}(\Omega;
\mathbf{R}^m)}=||\,|f|\,||_{L^{(p,q)}(\Omega)}.
\end{equation*}
 Obviously, it follows from the real-valued case that
$$||f||_{L^{p,q}(\Omega; \mathbf{R}^m)} \le ||f||_{L^{(p,q)}(\Omega; \mathbf{R}^m)}
\le \frac {p}{p-1} ||f||_{L^{p,q}(\Omega; \mathbf{R}^m)}$$ for every
$1 \le q \le \infty,$ and like in the real-valued case,
$||\cdot||_{L^{p,q}(\Omega; \mathbf{R}^m)}$ is already a norm when
$1\le q \le p,$ while it is a quasinorm when $p<q\le \infty.$

It is known that $(L^{p,q}(\Omega; \mathbf{R}^m),
||\cdot||_{L^{p,q}(\Omega; \mathbf{R}^m)})$ is a Banach space for
$1\le q \le p,$ while $(L^{p,q}(\Omega; \mathbf{R}^m),
||\cdot||_{L^{(p,q)}(\Omega; \mathbf{R}^m)})$ is a Banach space for
$1<p< \infty,$ $1\le q \le \infty.$ These spaces are reflexive if
$1<q<\infty$ and the dual of $L^{p,q}\Omega; \mathbf{R}^m)$ is, up to equivalence
of norms, the space $L^{p',q'}(\Omega; \mathbf{R}^m)$ for $1 \le q<\infty.$
See Bennett-Sharpley \cite[Theorem IV.4.7, Corollaries I.4.3 and
IV.4.8]{BS}, Hunt \cite[p.\ 259-262]{Hun}, the definition of $L^{p,q}(\Omega; \mathbf{R}^m)$
and the discussion after Proposition \ref{characterization of fns with absolutely continuous norm}.

\subsection{Absolute continuity of the $(p,q)$-norm and reflexivity of the Lorentz spaces}

\begin{Definition}\label{defn of fns with absolutely continuous norm}
{\rm(See Bennett-Sharpley \cite[Definition I.3.1]{BS}).}
Let $n, m \ge 1$ be two integers, $1<p<\infty$ and $1 \le q \le \infty.$ Suppose $\Omega \subset {\mathbf{R}^n}$ is open.
Let $X=L^{p,q}(\Omega; {\mathbf{R}}^m).$
A function $f$ in $X$ is said to have
\textit{absolutely continuous norm} in $X$ if and only if $||f
\chi_{E_k}||_{X} \rightarrow 0$ for every sequence $E_k$ satisfying
$E_k \rightarrow \emptyset$ a.e.
\end{Definition}

The following proposition gives a characterization of functions with absolutely continuous norm
in $X=L^{p,q}(\Omega; {\mathbf{R}}^m).$

\begin{Proposition} \label{characterization of fns with absolutely continuous norm}
{\rm(See Bennett-Sharpley \cite[Proposition I.3.6]{BS}).}
A function $f$ in $X$ has absolutely continuous norm if and only if the following condition holds: whenever $f_k$ {\rm($k=1,2, \ldots$)}, and $g$ are measurable functions satisfying $|f_k| \le |f|$ for all $k$ and $f_k \rightarrow g$ a.e., then $||f_k-g||_{X} \rightarrow 0.$
\end{Proposition}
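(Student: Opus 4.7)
The plan is to separate the two implications, and within the forward direction to treat $1 \le q < \infty$ and $q = \infty$ by different arguments.

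The reverse implication is essentially tautological: if the hypothesis holds and $E_k \to \emptyset$ a.e., then $f_k := f \chi_{\Omega \setminus E_k}$ satisfies $|f_k| \le |f|$ and $f_k \to f$ a.e., so $||f \chi_{E_k}||_X = ||f_k - f||_X \to 0$, which is exactly absolute continuity of the $(p,q)$-norm at $f$ in the sense of Definition \ref{defn of fns with absolutely continuous norm}.

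For the forward direction, set $h_k = f_k - g$. Passing to the a.e.\ limit in $|f_k| \le |f|$ gives $|g| \le |f|$, hence $|h_k| \le 2|f|$ and $h_k \to 0$ a.e. Working with the scalar functions $|h_k|$ and $|f|$ reduces the problem to the scalar Lorentz (quasi)norm. The first step is to show $h_k^{*}(t) \to 0$ for every $t > 0$. For this I use that $\{|h_k| > \epsilon\} \subset \{|f| > \epsilon/2\}$, the latter having finite measure since $f \in L^{p,q}$ with $1 < p < \infty$; because $\chi_{\{|h_k| > \epsilon\}} \to 0$ a.e.\ and is dominated by $\chi_{\{|f| > \epsilon/2\}}$, ordinary dominated convergence yields $\lambda_{[h_k]}(\epsilon) \to 0$ for every $\epsilon > 0$, and the defining formula $h_k^{*}(t) = \inf\{s : \lambda_{[h_k]}(s) \le t\}$ converts this into pointwise decay of $h_k^{*}$. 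Combined with the uniform majorant $h_k^{*} \le 2 f^{*}$, this settles the case $1 \le q < \infty$: the integrand $(t^{1/p} h_k^{*}(t))^{q}$ defining $||h_k||_{p,q}^{q}$ is dominated by the integrable function $(2 t^{1/p} f^{*}(t))^{q}$ and tends to $0$ pointwise, so dominated convergence gives $||h_k||_{p,q} \to 0$.

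The main obstacle is $q = \infty$, since pointwise decay of $h_k^{*}$ does not force $\sup_{t} t^{1/p} h_k^{*}(t) \to 0$, and this is where the hypothesis of absolute continuity of $f$'s norm must genuinely be used. I would proceed by a three-piece decomposition. Given $\epsilon > 0$, first choose $s_0 > 0$ with $||f \chi_{\{0 < |f| \le s_0\}}||_{p,\infty} < \epsilon$, possible because the sets $\{0 < |f| \le s_0\}$ decrease to $\emptyset$ as $s_0 \downarrow 0$; then set $A = \{|f| > s_0\}$, which has finite measure. This bounds the piece of $h_k$ outside $A$ by $||h_k \chi_{A^{c}}||_{p,\infty} \le 2 ||f \chi_{A^{c}}||_{p,\infty} < 2\epsilon$. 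On the finite-measure set $A$, Egorov's theorem furnishes $B \subset A$ with $|A \setminus B|$ arbitrarily small and $h_k \to 0$ uniformly on $B$; the $B$-piece is handled by the elementary bound $||h_k \chi_B||_{p,\infty} \le ||h_k \chi_B||_{\infty} \, |B|^{1/p} \to 0$, and the $(A \setminus B)$-piece is absorbed using the uniform form of absolute continuity, namely $\sup\{||f \chi_E||_{p,\infty} : E \subset A, |E| < \delta\} \to 0$ as $\delta \to 0$, which I would derive from Definition \ref{defn of fns with absolutely continuous norm} by a standard decreasing-union contradiction (if it failed, one could extract $E_k$ with $|E_k| < 2^{-k}$ yet $||f \chi_{E_k}||_{p,\infty} \ge c > 0$, and the decreasing sets $\bigcup_{j \ge k} E_j$ would shrink to a null set while their norms stayed bounded away from zero). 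The quasi-triangle inequality for $||\cdot||_{p,\infty}$, based on $h_k^{*}(t) \le (h_k \chi_A)^{*}(t/2) + (h_k \chi_{A^{c}})^{*}(t/2)$, then assembles the three bounds into $||h_k||_{p,\infty} \to 0$.
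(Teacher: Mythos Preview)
The paper does not give its own proof of this proposition; it is stated with a citation to Bennett--Sharpley \cite[Proposition I.3.6]{BS} and left at that. So there is no in-paper argument to compare against.

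Your argument is correct. The reverse implication is exactly the one-line observation you give. For the forward implication your treatment of $1\le q<\infty$ via pointwise decay of $h_k^{*}$ together with the majorant $h_k^{*}\le 2f^{*}$ and dominated convergence on $(0,\infty)$ is clean and valid. The $q=\infty$ case is where genuine work is needed, and your three-piece Egorov decomposition is a standard and correct route: the tail piece $\{0<|f|\le s_0\}$ is handled by absolute continuity applied to sets decreasing to $\emptyset$, the uniform-convergence piece by the elementary bound $\|\,\cdot\,\|_{p,\infty}\le |B|^{1/p}\|\,\cdot\,\|_{\infty}$, and the small-measure piece by the uniform version of absolute continuity, whose derivation via $F_k=\bigcup_{j\ge k}E_j$ is fine. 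The quasi-triangle assembly (or, since $\|\cdot\|_{(p,\infty)}$ is an actual norm, the ordinary triangle inequality) then closes the argument.

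By way of comparison, the Bennett--Sharpley proof is carried out in the generality of an arbitrary Banach function space, using only the Fatou property and the lattice structure of the norm rather than any explicit formula for it; your proof is more concrete and Lorentz-specific, especially in splitting off the $q<\infty$ case with dominated convergence on the defining integral. Both approaches yield the result, and yours has the advantage of being self-contained for the spaces actually used in the paper.
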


Let $X_a$ be the subspace of $X$ consisting of functions with absolutely continuous
norm and let $X_b$ be the closure in $X$ of the set of simple functions. It is known
that $X_a=X_b$ when $X=L^{p,q}(\Omega; \mathbf{R}^m)$ for $1<p<\infty,$
$1\le q\le \infty,$ and $m \ge 1$ integer. (See Bennett-Sharpley \cite[Theorem I.3.13]{BS}).
Moreover, we have $X_a=X_b=X$ when $X=L^{p,q}(\Omega; \mathbf{R}^m)$
for $1<p<\infty,$ $1\le q<\infty,$ and $m \ge 1$ integer.
(See Bennett-Sharpley \cite[Theorem IV.4.7 and Corollary IV.4.8]{BS}
and the definition of $L^{p,q}(\Omega; \mathbf{R}^m)$).

From Proposition \ref{function in Lpinfty but not in Lpq q finite} it follows
that $X_a \subsetneq X$ for $X=L^{p,\infty}(\Omega;
\mathbf{R}^m)$ whenever $m \ge 1$ is an integer.
Since $L^{p,\infty}(\Omega; \mathbf{R}^m)$ can be identified with
$(L^{p',1}(\Omega; \mathbf{R}^m))^{*}$ (see Bennett-Sharpley \cite[Corollary IV.4.8]{BS}
and the definition of $L^{p,q}(\Omega; \mathbf{R}^m)$), it
follows from Bennett-Sharpley \cite[Corollaries I.4.3, I.4.4, IV.4.8 and Theorem
IV.4.7]{BS} that neither $L^{p,1}(\Omega; {\mathbf{R}^m}),$ nor $L^{p,\infty}(\Omega; {\mathbf{R}^m})$
are reflexive whenever $1<p<\infty.$

\subsection{Strict inclusions between Lorentz spaces}

\vspace{2mm}

\begin{Remark} \label{relation between Lpr and Lps}
It is known
(see Bennett-Sharpley \cite[Proposition IV.4.2]{BS}) that for every $p \in
(1,\infty)$ and $1\le r<s\le \infty$ there exists a constant $C(p,r,s)>0$ such
that
\begin{equation}\label{relation between Lpr and Lps norm}
||f||_{L^{p,s}(\Omega)} \le C(p,r,s) ||f||_{L^{p,r}(\Omega)}
\end{equation}
for all measurable functions $f \in L^{p,r}(\Omega).$ In particular,
$L^{p,r}(\Omega) \subset L^{p,s}(\Omega).$ Like in the real-valued
case, it follows that
\begin{equation*}
||f||_{L^{p,s}(\Omega; \mathbf{R}^m)} \le C(p,r,s)
||f||_{L^{p,r}(\Omega; \mathbf{R}^m)}
\end{equation*}
for every $m \ge 1$ integer and for all measurable functions $f \in
L^{p,r}(\Omega; \mathbf{R}^m),$ where $C(p,r,s)$ is the constant
from (\ref{relation between Lpr and Lps norm}). In particular,
$$L^{p,r}(\Omega; \mathbf{R}^m) \subset
L^{p,s}(\Omega;\mathbf{R}^m) \mbox{ for every $m \ge 1$ integer.}$$
\end{Remark}

The above inclusion is strict. (See Ziemer \cite[p.\ 37, Exercise 1.7]{Zie}). Given an open ball $B(0,r) \subset {\mathbf{R}^n},$ where $n \ge 1$ integer, $r>0$ and $1 \le q_1<q_2 \le \infty,$ we construct next in Theorem \ref{Lpr stricly included in Lps} a function $u \in L^{p,q_2}(B(0,r); \mathbf{R}^m) \setminus L^{p,q_1}(B(0,r);\mathbf{R}^m).$ In addition, Theorem \ref{Lpr stricly included in Lps} will allow us to construct later a radial function $v$ that is smooth in a punctured ball $B^{*}(0,r)$ such that $|\nabla v|$ is in $L^{p,q_2}(B(0,r)) \setminus L^{p,q_1}(B(0,r)).$ It is enough to assume that $m=1$ when proving this strict inclusion.

\begin{Theorem} \label{Lpr stricly included in Lps}
Let $n \ge 1$ be an integer. Let $0 <\alpha \le 1$ and $r>0.$
Suppose $1<p<\infty$ and $1 \le q_1<q_2 \le \infty.$
We define $u_{r, \alpha, p}$ on $[0, \Omega_n r^n)$ by
\begin{equation} \label{defn of uralphap}
u_{r, \alpha, p}(t)=t^{-\frac{1}{p}} \ln^{-\alpha} \left( \frac{\Omega_n r^n e^{p \alpha}}{t} \right). 
\end{equation}

We also define
\begin{eqnarray}
\label{defn of uradraplhap}
& & u_{rad, r, \alpha, p}:[0, r) \rightarrow [0, \infty], u_{rad, r, \alpha, p}(t):=u_{r, \alpha, p}(\Omega_n t^n) \mbox{ and }\\
\label{defn of uralphanp}
& & u_{r,\alpha, n, p}: B(0,r) \subset {\mathbf{R}^n} \rightarrow [0, \infty], u_{r, \alpha, n, p}(x):=u_{rad, r, \alpha, p}(|x|).
\end{eqnarray}

Then

{\rm(i)} $u_{r, \alpha, p}$ is a decreasing function on $[0, \Omega_n r^n)$ and
\begin{equation}
\label{uralphanpstar=uralphapstar=uralphap}
u_{r, \alpha, n, p}^{*}(t)=u_{r, \alpha, p}^{*}(t)=u_{r, \alpha, p}(t) \mbox{ for all } t \in [0, \Omega_n r^n).
\end{equation}

{\rm(ii)} $u_{r, \alpha, n, p} \in L^{p, q_2}(B(0,r)) \setminus L^{p,q_1}(B(0,r))$ if $1\le q_1 \le \frac{1}{\alpha} < q_2 \le \infty.$

\end{Theorem}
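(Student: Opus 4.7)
The plan is to handle parts (i) and (ii) separately, with (i) being primarily a calculus exercise and (ii) being a change-of-variables computation in the rearrangement integral.

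For part (i), I would first show $u_{r,\alpha,p}$ is decreasing on $[0,\Omega_n r^n)$. Write $u_{r,\alpha,p}(t)=1/g(t)$, where
\[
g(t)=t^{1/p}\,h(t)^{\alpha},\qquad h(t)=\ln\!\left(\frac{\Omega_n r^n e^{p\alpha}}{t}\right),
\]
so $h'(t)=-1/t$, and a direct computation gives
\[
g'(t)=t^{1/p-1}h(t)^{\alpha-1}\!\left(\frac{h(t)}{p}-\alpha\right).
\]
The factor in parentheses is nonnegative precisely when $h(t)\ge p\alpha$, i.e., when $t\le \Omega_n r^n$, so $g$ is increasing and $u_{r,\alpha,p}$ is decreasing. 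Being decreasing and nonnegative, $u_{r,\alpha,p}$ coincides with its own nonincreasing rearrangement on $[0,\Omega_n r^n)$. To finish (i), I would compute the distribution function of $u_{r,\alpha,n,p}$: since $u_{r,\alpha,n,p}(x)=u_{rad,r,\alpha,p}(|x|)$ and $u_{rad,r,\alpha,p}$ is decreasing in $|x|$, the super-level set $\{u_{r,\alpha,n,p}>s\}$ is an open ball $B(0,\rho_s)$. Using $u_{rad,r,\alpha,p}(\rho)=u_{r,\alpha,p}(\Omega_n \rho^n)$ and $|B(0,\rho_s)|=\Omega_n \rho_s^n$, the distribution functions of $u_{r,\alpha,n,p}$ and $u_{r,\alpha,p}$ coincide, yielding \eqref{uralphanpstar=uralphapstar=uralphap}.

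For part (ii), I would reduce the norm computation using (i). Since $u_{r,\alpha,n,p}^{*}(t)=u_{r,\alpha,p}(t)$ for $t\in[0,\Omega_n r^n)$ and $u_{r,\alpha,n,p}^{*}(t)=0$ for $t\ge \Omega_n r^n$, we get $t^{1/p}u_{r,\alpha,n,p}^{*}(t)=\ln^{-\alpha}(\Omega_n r^n e^{p\alpha}/t)$ on $(0,\Omega_n r^n)$. For $1\le q<\infty$, the substitution $s=\ln(\Omega_n r^n e^{p\alpha}/t)$, $ds=-dt/t$, transforms
\[
\|u_{r,\alpha,n,p}\|_{L^{p,q}(B(0,r))}^{q}=\int_{0}^{\Omega_n r^n}\!\ln^{-\alpha q}\!\left(\frac{\Omega_n r^n e^{p\alpha}}{t}\right)\frac{dt}{t}=\int_{p\alpha}^{\infty}\! s^{-\alpha q}\,ds,
\]
which is finite if and only if $\alpha q>1$, i.e., $q>1/\alpha$. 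For $q=\infty$, the function $t\mapsto \ln^{-\alpha}(\Omega_n r^n e^{p\alpha}/t)$ is increasing on $(0,\Omega_n r^n)$ and tends to $(p\alpha)^{-\alpha}<\infty$ as $t\to\Omega_n r^n$, so $\|u_{r,\alpha,n,p}\|_{L^{p,\infty}(B(0,r))}=(p\alpha)^{-\alpha}$. Combining these two cases, $u_{r,\alpha,n,p}\in L^{p,q_2}(B(0,r))$ whenever $q_2>1/\alpha$ (including $q_2=\infty$), while $u_{r,\alpha,n,p}\notin L^{p,q_1}(B(0,r))$ whenever $q_1\le 1/\alpha$.

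The only real obstacle is verifying the monotonicity of $u_{r,\alpha,p}$; the factor $e^{p\alpha}$ inside the logarithm is placed exactly so that $g'\ge 0$ holds on the full interval $[0,\Omega_n r^n)$, and this is what makes $u_{r,\alpha,p}$ itself equal to its rearrangement (avoiding any need to rearrange an expression that is not already monotone). Once that is in hand, everything else is a single substitution and a comparison of the resulting power-type integral near $s=\infty$.
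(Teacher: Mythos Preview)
Your proposal is correct and follows essentially the same route as the paper: both establish monotonicity of $u_{r,\alpha,p}$ by differentiating (you differentiate the reciprocal $g=1/u_{r,\alpha,p}$, the paper differentiates $u_{r,\alpha,p}$ directly, but the key inequality $h(t)\ge p\alpha\iff t\le \Omega_n r^n$ is identical), then identify distribution functions to get \eqref{uralphanpstar=uralphapstar=uralphap}, and finally compute the $L^{p,q}$ norms via the same integral $\int_0^{\Omega_n r^n}\ln^{-\alpha q}(\Omega_n r^n e^{p\alpha}/t)\,dt/t$. Your explicit substitution $s=\ln(\Omega_n r^n e^{p\alpha}/t)$ and your closing remark about the role of the $e^{p\alpha}$ factor are nice touches that the paper leaves implicit.
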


\begin{proof} (i) Since $u_{r, \alpha, p}$ is defined on $[0, \Omega_n r^n)$, it follows that $u_{r, \alpha, p}^{*}(t)=0$ whenever $\Omega_n r^n \le t<\infty.$
Similarly, since $u_{r, \alpha, n, p}$ is defined on $B(0,r)$ and $|B(0,r)|=\Omega_n r^n,$ it follows that $u_{r, \alpha, n, p}^{*}(t)=0$ whenever $\Omega_n r^n \le t<\infty.$ Once we show that $u_{r, \alpha, p}$ is decreasing on $[0, \Omega_n r^n),$ the definition of $u_{r, \alpha, n, p}$ implies immediately that $u_{r, \alpha, n, p}$ and $u_{r, \alpha, p}$ have the same distribution function, proving claim (i).

We see that $u_{r, \alpha, p}$ is smooth and strictly positive on $(0, \Omega_n r^n).$ Moreover, it is easy to see that $\lim_{t \rightarrow 0} u_{r, \alpha, p}(t)=\infty.$ Thus, in order to show that $u_{r, \alpha, p}$ is decreasing on $[0, \Omega_n r^n),$ it is enough to show that $u_{r, \alpha, p}'(t)<0$ on $(0, \Omega_n r^n).$

For $t \in (0, \Omega_n r^n)$ we have
\begin{eqnarray*}
u_{r, \alpha, p}'(t)&=&-\frac{1}{p} t^{-1-\frac{1}{p}} \ln^{-\alpha} \left( \frac{\Omega_n r^n e^{p \alpha}}{t} \right)+\alpha
t^{-1-\frac{1}{p}} \ln^{-\alpha-1} \left( \frac{\Omega_n r^n e^{p \alpha}}{t} \right)\\
&=&t^{-1-\frac{1}{p}} \ln^{-\alpha-1} \left( \frac{\Omega_n r^n e^{p \alpha}}{t} \right) \left(\alpha-\frac{1}{p} \ln \left( \frac{\Omega_n r^n e^{p \alpha}}{t} \right)\right).
\end{eqnarray*}

We see that $$\ln \left( \frac{\Omega_n r^n e^{p \alpha}}{t} \right)>0, \mbox{ for all } t \in (0, \Omega_n r^n).$$

Thus, for $t \in (0, \Omega_n r^n)$ we have
\begin{eqnarray*}
u_{r, \alpha, p}'(t)<0 &\iff& \alpha-\frac{1}{p} \ln \left( \frac{\Omega_n r^n e^{p \alpha}}{t} \right)<0 \iff\\
\ln \left( \frac{\Omega_n r^n e^{p \alpha}}{t} \right)>p \alpha &\iff& \frac{\Omega_n r^n e^{p \alpha}}{t}>e^{p \alpha} \iff \Omega_n r^n>t.
\end{eqnarray*}

But the last inequality is obviously true for all $t \in (0, \Omega_n r^n).$ Thus, $u_{r,\alpha, p}'$ is strictly negative on $(0, \Omega_n r^n),$ which implies that $u_{r, \alpha, p}$ is strictly decreasing on $[0, \Omega_n r^n).$

The definition of $u_{r, \alpha, n, p}$ and the fact that $u_{r, \alpha, p}$ is continuous, strictly decreasing and strictly positive on
$(0, \Omega_n r^n)$ imply immediately that $u_{r, \alpha, n, p}$ and $u_{r, \alpha, p}$ have the same distribution function.
This yields (\ref{uralphanpstar=uralphapstar=uralphap}), finishing the proof of (i).

\vskip 2mm

(ii) We proved in part (i) that $u_{r, \alpha, n, p}^{*}(t)=u_{r, \alpha, p}^{*}(t)$ for all $t \ge 0.$ Thus,
$$||u_{r, \alpha, n, p}||_{L^{p,q}(B(0,r))}=||u_{r, \alpha, p}||_{L^{p,q}((0, \Omega_n r^n))}$$
for every $q$ in $[1,\infty].$

For $1<p<\infty$ and $1 \le q \le \infty$ we let $I_{r, \alpha, p, q}=||u_{r, \alpha, p}||_{L^{p,q}((0,\Omega_n r^n))}.$

Then via (\ref{uralphanpstar=uralphapstar=uralphap}) we have
\begin{eqnarray*}
I_{r, \alpha, p, q}&=&\sup_{0 \le t \le \Omega_n r^n} t^{\frac{1}{p}} u_{r, \alpha, p}^{*}(t)
= \sup_{0 \le t \le \Omega_n r^n} t^{\frac{1}{p}} u_{r, \alpha, p}(t) \\
&=& \sup_{0 \le t \le \Omega_n r^n} \ln^{-\alpha} \left(\frac{\Omega_n r^n e^{p \alpha}}{t} \right)=(p \alpha)^{-\alpha}
\end{eqnarray*}
for $q=\infty$
and
\begin{eqnarray*}
I_{r, \alpha, p, q}^{q}&=&\int_{0}^{\Omega_n r^n} \left(t^{\frac{1}{p}} u_{r, \alpha, p}^{*}(t) \right)^q \frac{dt}{t}
= \int_{0}^{\Omega_n r^n} \left(t^{\frac{1}{p}} u_{r, \alpha, p}(t) \right)^q \frac{dt}{t}\\
&=& \int_{0}^{\Omega_n r^n} \ln^{-q \alpha} \left( \frac{\Omega_n r^n e^{p \alpha}}{t} \right) \frac{dt}{t}
\end{eqnarray*}
for $1 \le q<\infty.$

For a given $q$ in $[1, \infty),$ the last integral in the above sequence is an improper one and converges if and only if $1-q \alpha<0$ if and only if $\frac{1}{\alpha}<q.$
An easy computation shows that the value of the convergent improper integral is

$$\frac{1}{-1+q \alpha} \ln^{1-q \alpha} \left( \frac{\Omega_n r^n e^{p \alpha}}{\Omega_n r^n}\right)=\frac{(p \alpha)^{1-q \alpha}}{-1+q \alpha}.$$

Thus, if $1 \le q_1 \le \frac{1}{\alpha} < q_2 \le \infty,$ we have
$$||u_{r, \alpha, n, p}||_{L^{p,q_2}(B(0,r))}<\infty=||u_{r, \alpha, n, p}||_{L^{p,q_1}(B(0,r))}.$$

Hence, we proved that $u_{r, \alpha, n, p} \in L^{p,q_2}(B(0,r)) \setminus L^{p,q_1}(B(0,r)).$ This shows that the inclusion
$L^{p,q_1}(B(0,r)) \subset L^{p,q_2}(B(0,r))$ is strict whenever $1<p<\infty$ and $1 \le q_1 < q_2 \le \infty.$ This finishes the proof of the theorem.

\end{proof}

Theorem \ref{Lpr stricly included in Lps} allows us to construct a radial function $v$ that is smooth in a punctured ball $B^{*}(0,r)$ such that $|\nabla v|$ is in $L^{p,q_2}(B(0,r)) \setminus L^{p,q_1}(B(0,r)).$ Here $r>0,$ $n \ge 1,$ $1<p<\infty$ and $1 \le q_1<q_2 \le \infty.$

\begin{Theorem} \label{Lpr strictly included in Lps via grad of smooth fns}
Let $n \ge 1$ be an integer. Let $0<\alpha \le 1$ and $r>0.$ Suppose $1<p<\infty$ and $1 \le q_1<q_2 \le \infty.$

We define
\begin{equation}
\label{defn of fradralphap}
f_{rad, r, \alpha, p}:[0,r) \rightarrow [0, \infty], f_{rad, r, \alpha, p}(t)=\int_{t}^{r} u_{rad, r, \alpha, p}(s) ds,
\end{equation}
where $u_{rad, r, \alpha, p}$ is the function defined in {\rm(\ref{defn of uradraplhap})}.
We also define
\begin{equation}
\label{defn of vralphanp}
v_{r, \alpha, n, p}:B(0,r) \rightarrow [0, \infty], v_{r, \alpha, n, p}(x):=f_{rad, r, \alpha, p}(|x|).
\end{equation}

Then

{\rm(i)} $v_{r, \alpha, n, p} \in C^{\infty}(B^{*}(0,r))$ and $$\nabla v_{r, \alpha, n, p}(x)=f_{rad, r, \alpha, p}'(|x|) \frac{x}{|x|} \mbox{ for all } x \in B^{*}(0,r).$$

{\rm(ii)} $|\nabla v_{r, \alpha, n, p}(x)|=u_{r, \alpha, n, p}(x)$ for all $x \in B^{*}(0,r),$
where $u_{r, \alpha, n, p}$ is the function defined in {\rm(\ref{defn of uralphanp})}.

{\rm(iii)} $\lim_{x \rightarrow y} v_{r, \alpha, n, p}(x)=0$ for all $y \in \partial B(0,r).$

{\rm(iv)} If $p>n,$ then $v_{r, \alpha, n, p}$ is continuous in $B(0,r).$

{\rm(v)} If $1<p \le n,$ then $v_{r, \alpha, n, p}$ is unbounded on $B(0,r).$

{\rm(vi)} $|\nabla v_{r, \alpha, n, p}| \in L^{p,q_2}(B(0,r)) \setminus L^{p,q_1}(B(0,r))$ if
$1 \le q_1 \le \frac{1}{\alpha} < q_2 \le \infty.$

\end{Theorem}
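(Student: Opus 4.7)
The plan is to handle the six assertions in order, relying on direct differentiation for (i)--(iii), on a careful integrability analysis of $u_{rad,r,\alpha,p}$ near $0$ for (iv) and (v), and on Theorem \ref{Lpr stricly included in Lps} for (vi).

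For (i), I would first observe that $u_{r,\alpha,p}$ is smooth on $(0,\Omega_n r^n)$ by inspection of (\ref{defn of uralphap}), so $u_{rad,r,\alpha,p}(s)=u_{r,\alpha,p}(\Omega_n s^n)$ is smooth on $(0,r)$. Hence $f_{rad,r,\alpha,p}$ defined by (\ref{defn of fradralphap}) is $C^\infty$ on $(0,r)$ with $f_{rad,r,\alpha,p}'(t)=-u_{rad,r,\alpha,p}(t)$. Since $v_{r,\alpha,n,p}(x)=f_{rad,r,\alpha,p}(|x|)$ and $|x|$ is smooth on $B^{*}(0,r)$, the chain rule gives $v_{r,\alpha,n,p}\in C^\infty(B^{*}(0,r))$ with the stated gradient formula. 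Part (ii) then follows immediately: $|\nabla v_{r,\alpha,n,p}(x)|=|f_{rad,r,\alpha,p}'(|x|)|=u_{rad,r,\alpha,p}(|x|)=u_{r,\alpha,p}(\Omega_n|x|^n)=u_{r,\alpha,n,p}(x)$ on $B^{*}(0,r)$, by (\ref{defn of uradraplhap})--(\ref{defn of uralphanp}). Part (iii) is a direct consequence of the definition of $f_{rad,r,\alpha,p}$: as $|x|\to r^-$, the interval of integration shrinks to a point and the integrand is bounded on $[r/2,r)$, so $v_{r,\alpha,n,p}(x)\to 0$.

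For (iv) and (v), the only question is the behavior at the origin, where $v_{r,\alpha,n,p}(0)=\int_0^r u_{rad,r,\alpha,p}(s)\,ds$. Substituting the explicit form of $u_{r,\alpha,p}$ from (\ref{defn of uralphap}) yields $u_{rad,r,\alpha,p}(s)=\Omega_n^{-1/p}s^{-n/p}\ln^{-\alpha}(r^ne^{p\alpha}/s^n)$. When $p>n$, the exponent $-n/p$ is strictly larger than $-1$ and the logarithmic factor tends to $0$ as $s\to 0$, so the integral on $(0,r/2)$ converges; combined with smoothness on $[r/2,r)$, this gives continuity of $v_{r,\alpha,n,p}$ at $0$ (and hence on $B(0,r)$ via (i) and (iii)). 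When $1<p\le n$, I would show that the same integral diverges, so that $v_{r,\alpha,n,p}(x)\to\infty$ as $x\to 0$. The borderline case $p=n$ is the main obstacle: here the substitution $y=-\ln s$ reduces $\int_0^{r/2}s^{-1}\ln^{-\alpha}(r^ne^{p\alpha}/s^n)\,ds$ to a tail integral of the form $\int_{y_0}^{\infty} y^{-\alpha}\,dy$ (up to constants), which diverges precisely because $\alpha\le 1$. For $p<n$ the same substitution leaves an exponentially growing factor $e^{(n/p-1)y}$ which overpowers $y^{-\alpha}$, so divergence is immediate.

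Finally, (vi) is essentially a quotation: by (ii), $|\nabla v_{r,\alpha,n,p}|$ and $u_{r,\alpha,n,p}$ agree a.e.\ on $B(0,r)$, and their $L^{p,q}$-norms coincide. The conclusion $|\nabla v_{r,\alpha,n,p}|\in L^{p,q_2}(B(0,r))\setminus L^{p,q_1}(B(0,r))$ for $1\le q_1\le 1/\alpha<q_2\le\infty$ then follows from Theorem \ref{Lpr stricly included in Lps}(ii).
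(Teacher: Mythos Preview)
Your proof is correct and, for parts (i)--(iv) and (vi), essentially identical to the paper's: the smoothness of $u_{rad,r,\alpha,p}$ on $(0,r)$ yields (i)--(iii) by direct differentiation, the integrability of $s^{-n/p}$ near $0$ when $p>n$ gives (iv), and (vi) is a citation of Theorem~\ref{Lpr stricly included in Lps}(ii) via (ii).

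The one genuine difference is in (v). The paper splits into two cases: for $1<p<n$ it introduces the auxiliary function $h(t)=t^{-1/p_1}\ln^{-\alpha}(\Omega_n r^n e^{p\alpha}/t)$ with $p_1=np/(n-p)$, locates its unique minimum on $(0,\Omega_n r^n)$, and uses that minimum to compare $u_{rad,r,\alpha,p}(s)$ from below with a multiple of $s^{-1}$; for $p=n$ it computes $f_{rad,r,\alpha,n}$ in closed form, treating $\alpha=1$ and $\alpha\in(0,1)$ separately. Your substitution $y=-\ln s$ handles both cases at once and is more economical: for $p=n$ the integral reduces to a tail of $\int y^{-\alpha}\,dy$, divergent exactly because $\alpha\le 1$, and for $p<n$ the surviving factor $e^{(n/p-1)y}$ forces divergence regardless of $\alpha$. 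Your route avoids the auxiliary minimization and the case split on $\alpha$; the paper's route has the minor payoff of producing the explicit formula for $v_{r,\alpha,n,n}$, which it does not actually use elsewhere.
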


\begin{proof}

Since $u_{rad, r, \alpha, p}$ is smooth in $(0,r)$ and bounded near $t=r,$ it follows immediately from the definition of $f_{rad, r, \alpha, p}$ that $f_{rad, r, \alpha, p}$ is smooth in $(0,r),$ $\lim_{t \rightarrow r} f_{rad, r, \alpha, p}(t)=0$ and
$f_{rad, r, \alpha, p}'(t)=-u_{rad, r, \alpha, p}(t)$ for all $t \in (0,r).$ This and the definition of $v_{r, \alpha, n, p}$ and $u_{r, \alpha, n, p}$ yield the claims
(i), (ii) and (iii) immediately.

Moreover, since
$$\lim_{t \rightarrow 0} f_{rad, r, \alpha, p}'(t)=-\lim_{t \rightarrow 0} u_{rad, r, \alpha, p}(t)=-\infty,$$ it follows immediately via (i) and (ii) that $v$ is not in $C^{\infty}(B(0,r)),$ because $v$ does not have a gradient at $x=0 \in B(0,r).$

We proved in (i) that $v_{r, \alpha, n, p} \in C^{\infty}(B^{*}(0,r)).$ Thus, the function $v_{r, \alpha, n, p}$ is continuous in $B(0,r)$ if and only if it is continuous at $x=0 \in B(0,r)$ if and only if $f_{rad, r, \alpha, p}$ is continuous at $t=0 \in [0,r).$ But from the definition of $f_{rad, r, \alpha, p},$ we see that this function is continuous at $t=0 \in [0,r)$ if and only $f_{rad, r, \alpha, p}(0)<\infty.$
Therefore, $v_{r, \alpha, n, p}$ is continuous in $B(0,r)$ if and only if $f_{rad, r, \alpha, p}(0)<\infty.$

\vskip 2mm

We prove now claim (iv). The definition of $u_{rad, r, \alpha, p}$ easily implies that
$$u_{rad, r, \alpha, p}(s) \le (\Omega_n s^n)^{-\frac{1}{p}} \ln^{-\alpha} (e^{p \alpha})= (p \alpha)^{-\alpha} (\Omega_n s^n)^{-\frac{1}{p}} $$
for all $s \in (0,r).$

For $1 \le n<p<\infty,$ the definition of $f_{rad, r, \alpha, p},$ the finiteness of the improper Riemann integral $\int_{0}^{r} s^{-\frac{n}{p}} ds,$ and the Comparison Test for improper Riemann integrals imply immediately that
\begin{eqnarray*}
f_{rad, r, \alpha, p}(0)&=&\lim_{t \rightarrow 0} f_{rad, r, \alpha, p}(t) \le (p \alpha)^{-\alpha} \Omega_n^{-\frac{1}{p}}
\lim_{t \rightarrow 0} \int_{t}^{r} s^{-\frac{n}{p}} ds \\
&=& (p \alpha)^{-\alpha} \Omega_n^{-\frac{1}{p}} \left(1-\frac{n}{p}\right)^{-1} r^{1-\frac{n}{p}}<\infty.
\end{eqnarray*}
Thus, if $1 \le n<p<\infty$ we have $f_{rad, r, \alpha, p}(0)<\infty,$ which implies (via the above discussion on the boundedness of $f_{rad, r, \alpha, p}(0)$) that $v_{r, \alpha, n, p}$ is continuous in $B(0,r).$

(v) For $1<p \le n,$ we show that $f_{rad, r, \alpha, p}(0)=\infty.$ We treat the cases $1<p<n$ and $1<p=n$ separately.

Case I. We consider first the case $1<p<n.$
We begin by showing that there exists a constant $m=m_{r, \alpha, n, p}>0$ such that
$$t^{\frac{1}{n}-\frac{1}{p}} \ln^{-\alpha} \left( \frac{\Omega_n r^n e^{p\alpha}}{t} \right) \ge m \mbox{ for all } t \in (0, \Omega_n r^n),$$
which is equivalent to showing that
$$t^{-\frac{1}{p}} \ln^{-\alpha} \left( \frac{\Omega_n r^n e^{p\alpha}}{t} \right) \ge m t^{-\frac{1}{n}} \mbox{ for all } t \in (0, \Omega_n r^n),$$
which is equivalent to showing that
$$u_{rad, r, \alpha, p}(s) \ge m (\Omega_n s^n)^{-\frac{1}{n}} \mbox{ for all } s \in (0,r).$$

Once we show the existence of such $m$, it follows immediately via the Comparison Test for improper Riemann integrals and the definition
of $f_{rad, r, \alpha, p}$ that
\begin{eqnarray*}
f_{rad, r, \alpha, p}(0)&=&\lim_{t \rightarrow 0} f_{rad, r, \alpha, p}(t) = \lim_{t \rightarrow 0} \int_{t}^{r} u_{rad, r, \alpha, p}(s) ds\\
 &\ge& m \Omega_n^{-\frac{1}{n}} \lim_{t \rightarrow 0} \int_{t}^{r} s^{-1} ds=\infty.
\end{eqnarray*}

This would prove the unboundedness of $v_{r, \alpha, n, p}$ on $B(0,r)$ when $1<p<n.$

We let $p_1=\frac{np}{n-p}.$ Thus, $p_1>p$ and $\frac{1}{p_1}=\frac{1}{p}-\frac{1}{n}.$ We define $h$ on the interval $[0, \Omega_n r^n)$ by
\begin{equation} \label{defn of hralphanp}
h(t)=h_{r, \alpha, n, p}(t)=t^{-\frac{1}{p_1}} \ln^{-\alpha} \left( \frac{\Omega_n r^n e^{p \alpha}}{t} \right).
\end{equation}
We notice that $h$ is smooth and strictly positive on $(0, \Omega_n r^n).$ Moreover, it is easy to see that $\lim_{t \rightarrow 0} h(t)=\infty.$
We compute $h'$ on $(0, \Omega_n r^n)$ and we notice that
$$h'(t)=t^{-1-\frac{1}{p_1}} \ln^{-\alpha-1} \left( \frac{\Omega_n r^n e^{p \alpha}}{t} \right)
\left(\alpha-\frac{1}{p_1} \ln \left(\frac{\Omega_n r^n e^{p \alpha}}{t} \right)\right), t \in (0, \Omega_n r^n).$$
We see that $h'(t)=0$ if and only if $t=t_{crit}=\Omega_n r^n e^{p\alpha-p_1\alpha} \in (0, \Omega_n r^n).$
We notice that $h$ has an unique global minimum on $(0, \Omega_n r^n),$ at $t=t_{crit}.$
We define $m=m_{r, \alpha, n, p}:=h(t_{crit}).$ Then $m>0$ and $h(t) \ge m>0$ for all $t \in [0, \Omega_n r^n).$ This proves the existence of the desired constant $m$ and finishes the proof of Case I.

Case II. We consider now the case $1<p=n.$

We compute effectively $f_{rad, r, \alpha, n}$ by considering the cases $\alpha=1$ and $\alpha \in (0,1)$ separately.

We assume first that $\alpha=1.$
For every $t \in (0,r)$ we have
\begin{eqnarray*}
f_{rad, r, 1, n}(t)&=&\int_{t}^{r} (\Omega_n s^{n})^{-\frac{1}{n}} \ln^{-1} \left( \frac{\Omega_n r^n e^n}{\Omega_n s^n} \right) ds\\
&=& \Omega_n^{-\frac{1}{n}} \int_{t}^{r} s^{-1} \ln^{-1} \left( \frac{r^n e^n}{s^n} \right) ds\\
&=& \Omega_n^{-\frac{1}{n}} n^{-1} \int_{t}^{r} s^{-1} \ln^{-1} \left( \frac{re}{s} \right) ds\\
&=& \Omega_n^{-\frac{1}{n}} n^{-1} \ln \left(\ln\left( \frac{re}{t} \right)\right)=\Omega_n^{-\frac{1}{n}} n^{-1} \ln\left(1+\ln\left(\frac{r}{t}\right)\right).
\end{eqnarray*}
Thus,
$$v_{r, 1, n, n}(x)=\Omega_n^{-\frac{1}{n}} n^{-1} \ln\left(1+\ln\left(\frac{r}{|x|}\right)\right) \mbox{ for all } x \in B(0,r).$$
It is easy to see that $v_{r, 1, n, n}$ is unbounded on $B(0,r).$ This proves Case II when $1<p=n$ and $\alpha=1.$

We assume now that $\alpha \in (0,1).$
For every $t \in (0,r)$ we have
\begin{eqnarray*}
f_{rad, r, \alpha, n}(t)&=&\int_{t}^{r} (\Omega_n s^{n})^{-\frac{1}{n}} \ln^{-\alpha} \left( \frac{\Omega_n r^n e^{n\alpha}}{\Omega_n s^n} \right) ds\\
&=& \Omega_n^{-\frac{1}{n}} \int_{t}^{r} s^{-1} \ln^{-\alpha} \left( \frac{r^n e^{n\alpha}}{s^n} \right) ds\\
&=& \Omega_n^{-\frac{1}{n}} n^{-\alpha} \int_{t}^{r} s^{-1} \ln^{-\alpha} \left( \frac{re^{\alpha}}{s} \right) ds\\
&=& \Omega_n^{-\frac{1}{n}} n^{-\alpha} (1-\alpha)^{-1} \left( \ln^{1-\alpha} \left( \frac{re^{\alpha}}{t} \right)-\alpha^{1-\alpha}  \right).
\end{eqnarray*}
Thus,
$$v_{r, \alpha, n, n}(x)=\frac{1}{\Omega_n^{\frac{1}{n}}n^{\alpha} (1-\alpha)} \left( \ln^{1-\alpha} \left( \frac{re^{\alpha}}{|x|} \right)-\alpha^{1-\alpha}  \right) \mbox{ for all } x \in B(0,r).$$
It is easy to see that $v_{r, \alpha, n, n}$ is unbounded on $B(0,r).$ This proves Case II when $1<p=n$ and $\alpha \in (0,1).$ This finishes the proof of claim (v).

\vskip 2mm

We prove now (vi). From part (ii) we have $|\nabla v_{r, \alpha, n, p}(x)|=u_{r, \alpha, n, p}(x)$
for all $x \in B^{*}(0,r).$ The claim follows immediately from the choice of $\alpha$ and Theorem \ref{Lpr stricly included in Lps} (ii). This finishes the proof of the theorem.

\end{proof}

The following proposition shows that $L^{p,\infty}$ does not have an absolutely continuous
$(p,\infty)$-norm. Moreover, it exhibits a function $u \in L^{p, \infty}$ that does not have
absolutely continuous $(p,\infty)$-norm and is not in $L^{p,q}$ for any $q$ in $[1, \infty).$

\begin{Proposition} \label{function in Lpinfty but not in Lpq q finite}
Let $n \ge 1$ be an integer. Let $r>0$ and $1<p<\infty.$
We define $$u_r: B(0,r) \rightarrow [0, \infty], u_{r}(x)=|x|^{-\frac{n}{p}}, 0 \le |x|<r.$$
Then

\par {\rm{(i)}} $u_r \in L^{p, \infty}(B(0,r))$ and $||u_r||_{L^{p, \infty}(B(0,r))}=\Omega_n^{\frac{1}{p}}.$

\par {\rm{(ii)}} $u_r \notin L^{p,q}(B(0,r))$ for every $q \in [1, \infty).$

\par {\rm{(iii)}} $u_r$ does not have absolutely continuous $(p,\infty)$-norm.

\par {\rm{(iv)}} If $v:B(0,r) \rightarrow {\mathbf{R}}$ is a locally bounded Lebesgue measurable function on $B(0,r),$ then
$$||u_r-v||_{L^{p,\infty}(B(0,\alpha))} \ge ||u_r||_{L^{p,\infty}(B(0,r))}$$
for every $\alpha \in (0,r).$

\end{Proposition}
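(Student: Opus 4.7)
\smallskip

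\textbf{Plan of proof.} The proof is essentially a direct computation of the nonincreasing rearrangement of $u_r$, from which all four claims follow. First I would observe that $u_r$ is radial and strictly decreasing in $|x|$, so its distribution function is easily computed:
\begin{equation*}
\lambda_{[u_r]}(t)=\bigl|\{x\in B(0,r):|x|^{-n/p}>t\}\bigr|=\min\bigl(\Omega_n r^n,\ \Omega_n t^{-p}\bigr).
\end{equation*}
Inverting this via the definition of $u_r^{*}$ yields
\begin{equation*}
u_r^{*}(s)=\Omega_n^{\,1/p}\,s^{-1/p}\quad\text{for }0<s<\Omega_n r^n,
\end{equation*}
and $u_r^{*}(s)=0$ for $s\ge \Omega_n r^n$. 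From here claim (i) is immediate since $s^{1/p}u_r^{*}(s)\equiv\Omega_n^{\,1/p}$ on $(0,\Omega_n r^n)$. Claim (ii) follows from
\begin{equation*}
\|u_r\|_{L^{p,q}(B(0,r))}^{q}=\int_{0}^{\Omega_n r^n}\bigl(s^{1/p}u_r^{*}(s)\bigr)^{q}\,\frac{ds}{s}=\Omega_n^{\,q/p}\int_{0}^{\Omega_n r^n}\frac{ds}{s}=\infty,
\end{equation*}
the divergence coming from the behavior of $ds/s$ at the origin.

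For claim (iii) I would take the test sequence $E_k=B(0,1/k)$ with $k>1/r$. Clearly $E_k\to\emptyset$ a.e., yet $u_r\chi_{E_k}$ has the same radial decreasing profile as $u_{1/k}$, and the same computation used for (i) gives $\|u_r\chi_{E_k}\|_{L^{p,\infty}(B(0,r))}=\Omega_n^{\,1/p}$ for every $k$. Hence the norm does not tend to zero, contradicting absolute continuity.

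Claim (iv) is the main content. Fix $\alpha\in(0,r)$. Since $\overline{B(0,\alpha)}$ is a compact subset of $B(0,r)$ and $v$ is locally bounded, there exists $M<\infty$ with $|v|\le M$ on $B(0,\alpha)$. The triangle inequality then gives, for every $t>0$,
\begin{equation*}
\{x\in B(0,\alpha):|u_r(x)-v(x)|>t\}\supset\{x\in B(0,\alpha):u_r(x)>t+M\}.
\end{equation*}
For all sufficiently large $t$ we have $(t+M)^{-p/n}<\alpha$, so the right-hand side equals $B(0,(t+M)^{-p/n})$, of Lebesgue measure $\Omega_n(t+M)^{-p}$. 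Using the definition of the $(p,\infty)$-norm through the distribution function,
\begin{equation*}
\|u_r-v\|_{L^{p,\infty}(B(0,\alpha))}\ge\sup_{t\text{ large}}t\bigl(\Omega_n(t+M)^{-p}\bigr)^{1/p}=\Omega_n^{\,1/p}\sup_{t\text{ large}}\frac{t}{t+M}=\Omega_n^{\,1/p},
\end{equation*}
which equals $\|u_r\|_{L^{p,\infty}(B(0,r))}$ by part (i).

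The only genuine obstacle is keeping the direction of the triangle inequality straight in (iv) and making sure the local boundedness of $v$ is used in exactly the right way (boundedness on a neighborhood of $\overline{B(0,\alpha)}$, which requires $\alpha<r$ strictly); everything else reduces to the explicit distribution function $\lambda_{[u_r]}(t)=\min(\Omega_n r^n,\Omega_n t^{-p})$ computed at the start.
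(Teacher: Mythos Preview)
Your argument is correct. For parts (i)--(iii) it is essentially identical to the paper's proof: both compute the distribution function $\lambda_{[u_r]}(t)=\min(\Omega_n r^n,\Omega_n t^{-p})$, read off the rearrangement $u_r^{*}(s)=(\Omega_n/s)^{1/p}$ on $(0,\Omega_n r^n)$, and then (i), (ii), (iii) follow at once (the paper phrases (iii) via the restrictions $u_{r,\alpha}$ to $B(0,\alpha)$, you via the sets $E_k=B(0,1/k)$, which amounts to the same observation).

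For (iv) you take a genuinely different, and somewhat slicker, route. The paper fixes $\varepsilon\in(0,1)$, shrinks to a small ball $B(0,\alpha_\varepsilon)$ on which $M_\alpha<\varepsilon\,u_r$, obtains the pointwise bound $|u_r-v|\ge(1-\varepsilon)u_r$ there, passes to norms, and then lets $\varepsilon\to 0$. You instead work directly with the distribution-function form of the $L^{p,\infty}$ quasinorm: from $|v|\le M$ on $B(0,\alpha)$ you get the set inclusion $\{|u_r-v|>t\}\supset\{u_r>t+M\}$, compute the measure of the latter for large $t$, and let $t\to\infty$. Your approach avoids the auxiliary $\varepsilon$ and the intermediate shrinking of balls; the paper's approach has the minor advantage that the pointwise inequality $|u_r-v|\ge(1-\varepsilon)u_r$ transfers to any rearrangement-invariant norm, not just $L^{p,\infty}$. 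One small remark: you only need $|v|\le M$ on $\overline{B(0,\alpha)}$ itself, not on a neighborhood of it, so your parenthetical about ``a neighborhood of $\overline{B(0,\alpha)}$'' is unnecessary.
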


\begin{proof} We compute $u_r^{*},$ the nonincreasing rearrangement of $u_r.$
In order to do that, we first compute $\lambda_{[u_r]},$ the distribution function of $u_r.$ For every $t \in [0, \infty)$ we have
\begin{eqnarray*}
\lambda_{[u_r]}(t)&=&|\{ x \in B(0,r): |u_r(x)|>t \}|=|\{ x \in B(0,r): |x|^{-\frac{n}{p}}>t \}|\\
&=&|\{ x \in B(0,r): |x|<t^{-\frac{p}{n}} \}|=|B(0, t^{-\frac{p}{n}}) \cap B(0,r)|\\
&=&\min (\Omega_n t^{-p}, \Omega_n r^n).
\end{eqnarray*}

Thus,

\begin{eqnarray*}
u_r^{*}(t)=\left\{ \begin{array}{ll}
\left(\frac{\Omega_n}{t}\right)^{\frac{1}{p}} & \mbox{ if $t \in [0, \Omega_n r^n)$}\\
0 & \mbox{ if $t \in [\Omega_n r^n, \infty)$}.
\end{array}
\right.
\end{eqnarray*}

This implies immediately that
$$||u_r||_{L^{p,\infty}(B(0,r))}= \sup_{t \in [0, \Omega_n r^n)} t^{\frac{1}{p}} u_r^{*}(t)=\sup_{t \in [0, \Omega_n r^n]} t^{\frac{1}{p}}
(\Omega_n t^{-1})^{\frac{1}{p}}=\Omega_n^{\frac{1}{p}}$$
and
\begin{eqnarray*}
||u_r||_{L^{p,q}(B(0,r))}^{q}&=&\int_{0}^{\Omega_n r^n} (t^{\frac{1}{p}} u_r^{*}(t))^{q} \frac{dt}{t}\\
&=&\int_{0}^{\Omega_n r^n} \left(t^{\frac{1}{p}} (\Omega_n t^{-1})^{\frac{1}{p}} \right)^{q} \frac{dt}{t}\\
&=&\int_{0}^{\Omega_n r^n} \Omega_n^{\frac{q}{p}} \frac{dt}{t}=\infty
\end{eqnarray*}
for all $q$ in $[1, \infty).$
This proves (i) and (ii).

\vskip 2mm

(iii) We prove now that the function $u_r$ does not have an absolutely continuous $(p,\infty)$-norm.
Let $\alpha \in (0,r)$ be fixed. Let $u_{r,\alpha}: B(0,r) \rightarrow [0, \infty]$ be the restriction of $u_r$ to
$B(0, \alpha).$
By doing a computation very similar to the computation of $u_r^{\*},$ we have
\begin{eqnarray*}
u_{r,\alpha}^{*}(t)=\left\{ \begin{array}{ll}
\left(\frac{\Omega_n}{t}\right)^{\frac{1}{p}} & \mbox{ if $t \in [0, \Omega_n \alpha^n)$}\\
0 & \mbox{ if $t \in [\Omega_n \alpha^n, \infty).$}
\end{array}
\right.
\end{eqnarray*}
Thus,
\begin{equation} \label{ur has constant pinfty norm on B0alpha}
||u_{r,\alpha}||_{L^{p,\infty}(B(0,\alpha))}=||u_r||_{L^{p,\infty}(B(0,\alpha))} =||u_r||_{L^{p,\infty}(B(0,r))}=\Omega_n^{\frac{1}{p}}
\end{equation}
for every $\alpha \in (0,r).$
This shows that $u_r$ does not have an absolutely continuous $(p,\infty)$-norm. This proves (iii).

\vskip 2mm

We prove now (iv). Let $v:B(0,r) \rightarrow {\mathbf{R}}$ be a Lebesgue measurable function that is locally bounded on $B(0,r).$ (Any continuous function on $B(0,r)$ is such a function). Let $\alpha \in (0,r)$ and $\varepsilon \in (0,1)$ be fixed. Let $M_{\alpha}>0$ be chosen such that $|v(x)|<M_{\alpha}$ for all $x \in B(0, \alpha).$
We have
$$|u_{r}(x)-v(x)| \ge |u_{r}(x)|-|v(x)| \ge |u_{r}(x)|-M_{\alpha}$$
for all $x \in B(0,\alpha).$
We want to find $\alpha_{\varepsilon} \in (0, \alpha)$ such that $M_{\alpha}<\varepsilon |u_{r}(x)|$ for all $x \in B(0,\alpha_{\varepsilon}).$
We have
$$M_{\alpha}<\varepsilon |u_{r}(x)| \iff \frac{M_{\alpha}}{\varepsilon}<|x|^{-\frac{n}{p}} \iff |x|^{\frac{n}{p}}<\frac{\varepsilon}{M_{\alpha}} \iff |x|<\left(\frac{\varepsilon}{M_{\alpha}}\right)^{\frac{p}{n}}.$$
If we choose
$$\alpha_{\varepsilon}=\min\left(\alpha, \left(\frac{\varepsilon}{M_{\alpha}}\right)^{\frac{p}{n}}\right),$$
the above computation, the definition of $u_{r}$ and the fact that $|v|< M_{\alpha}$ on $B(0,\alpha)$
imply that $$|u_{r}(x)-v(x)| \ge (1-\varepsilon) |u_{r}(x)|$$
for all $x$ in $B(0, \alpha_{\varepsilon}).$
Thus, we have
\begin{eqnarray*}
||u_{r}-v||_{L^{p,\infty}(B(0, \alpha))} &\ge& ||u_{r}-v||_{L^{p,\infty}(B(0, \alpha_{\varepsilon}))} \ge (1-\varepsilon) ||u_{r}||_{L^{p,\infty}(B(0, \alpha_{\varepsilon}))}\\
&=&(1-\varepsilon) ||u_{r}||_{L^{p,\infty}(B(0,r))}.
\end{eqnarray*}
The inequalities in the above sequence are obvious; we use (\ref{ur has constant pinfty norm on B0alpha}) for the equality in the above sequence. By letting $\varepsilon \rightarrow 0,$ we obtain the desired conclusion for a fixed $\alpha \in (0,r).$
Thus, we proved claim (iv). This finishes the proof.

\end{proof}

\subsection{H\"{o}lder Inequalities for Lorentz Spaces}

Here we record the following generalized H\"{o}lder inequalities for Lorentz spaces, previously proved in \cite{Cos1} and/or in \cite{Cos3}, valid for all integers $n \ge 1.$
\begin{Theorem}\label{Holder for Lorentz}
{\rm(See Costea \cite[Theorem 2.3]{Cos1} and \cite[Theorem 2.2.1]{Cos3}).}
Let $\Omega \subset \mathbf{R}^n.$ Suppose $1<p<\infty$ and $1\le q \le \infty.$ If $f \in L^{p,q}(\Omega)$ and $g \in L^{p', q'}(\Omega),$ then
$$\int_{\Omega}|f(x)g(x)| dx \le \int_{0}^{\infty} f^{*}(s) g^{*}(s) ds \le ||f||_{L^{p,q}(\Omega)} ||g||_{L^{p',q'}(\Omega)}.$$
\end{Theorem}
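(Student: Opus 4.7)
The plan is to establish the two inequalities of the chain separately. The first inequality,
\begin{equation*}
\int_{\Omega} |f(x) g(x)|\, dx \le \int_0^\infty f^*(s) g^*(s)\, ds,
\end{equation*}
is the classical Hardy--Littlewood rearrangement inequality. I would invoke it directly from Bennett--Sharpley \cite[Theorem II.2.2]{BS}, which the paper already cites repeatedly; no further work is needed at this step.

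For the second inequality, the key algebraic observation is that since $\tfrac{1}{p}+\tfrac{1}{p'}=1$, one can factor
\begin{equation*}
f^*(s) g^*(s) = \bigl(s^{1/p} f^*(s)\bigr) \bigl(s^{1/p'} g^*(s)\bigr) \cdot \frac{1}{s}.
\end{equation*}
Integrating against $ds$ on $(0,\infty)$ is the same as integrating the product of the two bracketed factors against the Haar measure $ds/s$, and those bracketed factors are precisely the integrands appearing inside the definitions of $\|f\|_{L^{p,q}(\Omega)}$ and $\|g\|_{L^{p',q'}(\Omega)}$ given in Section \ref{Section Lorentz spaces}.

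I would then split into cases. When $1<q<\infty$, so that also $1<q'<\infty$, I apply the classical H\"older inequality on the measure space $\bigl((0,\infty), ds/s\bigr)$ with conjugate exponents $q$ and $q'$ to the two bracketed factors, and the right-hand side assembles exactly into $\|f\|_{L^{p,q}(\Omega)} \|g\|_{L^{p',q'}(\Omega)}$. When $q=1$ (hence $q'=\infty$), I instead pull the supremum $\sup_{s>0} s^{1/p'} g^*(s) = \|g\|_{L^{p',\infty}(\Omega)}$ outside the integral, leaving $\int_0^\infty s^{1/p} f^*(s)\, \tfrac{ds}{s} = \|f\|_{L^{p,1}(\Omega)}$; the symmetric case $q=\infty$, $q'=1$ is handled the same way with the roles of $f$ and $g$ exchanged.

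I do not anticipate a serious obstacle. The entire argument is a short textbook computation once one notices the factorization $s = s^{1/p} \cdot s^{1/p'}$, which is exactly the trick that converts the classical H\"older inequality on $L^p \times L^{p'}$ into its Lorentz-space analogue. The only minor care needed is to treat the endpoint cases $q \in \{1, \infty\}$ separately, since the form of H\"older used in the main case requires both exponents to be finite.
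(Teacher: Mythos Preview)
Your proof is correct and is the standard argument: the Hardy--Littlewood inequality (Bennett--Sharpley \cite[Theorem II.2.2]{BS}) handles the first step, and the factorization $s=s^{1/p}s^{1/p'}$ together with the ordinary H\"older inequality on $\bigl((0,\infty),\tfrac{ds}{s}\bigr)$ handles the second, with the endpoint cases $q\in\{1,\infty\}$ done by pulling out the supremum.

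Note, however, that the paper does not actually supply its own proof of this theorem: it merely records the statement, referring to the author's earlier works \cite[Theorem 2.3]{Cos1} and \cite[Theorem 2.2.1]{Cos3} for the argument. Your write-up therefore matches what one finds in those references (and in standard expositions), not anything that appears explicitly in the present paper.
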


We have the following generalized H\"{o}lder inequality for Lorentz
spaces, valid for all integers $n \ge 1.$

\begin{Theorem} \label{Holder for Lorentz with general exponents}
{\rm(See Costea \cite[Theorem 2.2.2]{Cos3}).}
Suppose $\Omega \subset \mathbf{R}^n$ has finite measure. Let
$1<p_1,p_2, p_3<\infty,$ $1\le q_1, q_2, q_3\le \infty$ be such that
\begin{equation*}
\frac{1}{p_1}=\frac{1}{p_2}+\frac{1}{p_3}
\end{equation*}
and either
\begin{equation*}
\frac{1}{q_1}=\frac{1}{q_2}+\frac{1}{q_3}
\end{equation*}
whenever $1\le q_1, q_2, q_3<\infty$ or $1\le q_1=q_2 \le
q_3=\infty$ or $1 \le q_1=q_3 \le q_2=\infty.$ Then
\begin{equation*}
||f||_{L^{p_1,q_1}(\Omega; \mathbf{R}^m)} \le
||f||_{L^{p_2,q_2}(\Omega; \mathbf{R}^m)} \,
||\chi_{\Omega}||_{L^{p_3,q_3}(\Omega)}.
\end{equation*}
\end{Theorem}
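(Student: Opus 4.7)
The plan is to reduce immediately to the scalar case via the definition $\|f\|_{L^{p,q}(\Omega;\mathbf{R}^m)} = \| |f| \|_{L^{p,q}(\Omega)}$, so it suffices to prove the inequality for a nonnegative scalar function $g = |f|$. The first preparatory step is to compute the nonincreasing rearrangement of $\chi_\Omega$. Since $\lambda_{[\chi_\Omega]}(t) = |\Omega|$ for $t\in[0,1)$ and $0$ for $t\ge 1$, one gets $\chi_\Omega^*(t) = \chi_{[0,|\Omega|)}(t)$. A direct calculation then gives
\begin{equation*}
\|\chi_\Omega\|_{L^{p_3,q_3}(\Omega)} = \bigl(p_3/q_3\bigr)^{1/q_3} |\Omega|^{1/p_3} \quad \text{if } 1\le q_3<\infty, \qquad \|\chi_\Omega\|_{L^{p_3,\infty}(\Omega)} = |\Omega|^{1/p_3}.
\end{equation*}

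The second step exploits the factorization $t^{1/p_1} = t^{1/p_2}\cdot t^{1/p_3}$ coming from $1/p_1 = 1/p_2 + 1/p_3$, together with the fact that $g^*$ vanishes on $[|\Omega|,\infty)$ because $g$ is supported in $\Omega$. Hence, for $q_1<\infty$, the integral defining $\|g\|_{L^{p_1,q_1}(\Omega)}^{q_1}$ reduces to
\begin{equation*}
\int_0^{|\Omega|} \bigl(t^{1/p_2} g^*(t)\bigr)^{q_1}\, t^{q_1/p_3}\, \frac{dt}{t}.
\end{equation*}

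I would then split into the three exponent regimes. In the case $1\le q_1,q_2,q_3<\infty$ with $1/q_1 = 1/q_2 + 1/q_3$, I apply Hölder's inequality to this integral with conjugate exponents $q_2/q_1$ and $q_3/q_1$, which satisfy $q_1/q_2+q_1/q_3=1$; the first factor is $\|g\|_{L^{p_2,q_2}(\Omega)}^{q_1}$ and the second factor is exactly $\|\chi_\Omega\|_{L^{p_3,q_3}(\Omega)}^{q_1}$ by the first step. Raising to the $1/q_1$ power yields the claim. In the mixed case $1\le q_1=q_2\le q_3=\infty$, I bound $t^{q_1/p_3}\le |\Omega|^{q_1/p_3} = \|\chi_\Omega\|_{L^{p_3,\infty}(\Omega)}^{q_1}$ and pull this constant out of the integral, leaving $\|g\|_{L^{p_2,q_2}(\Omega)}^{q_1}$. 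In the case $1\le q_1=q_3\le q_2=\infty$, I bound $(t^{1/p_2} g^*(t))^{q_1} \le \|g\|_{L^{p_2,\infty}(\Omega)}^{q_1}$ pointwise and then integrate the remaining $t^{q_3/p_3 - 1}$ factor over $(0,|\Omega|)$, recovering $\|\chi_\Omega\|_{L^{p_3,q_3}(\Omega)}^{q_1}$ after extracting the $q_1$-th root.

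The main obstacle is not any deep estimate but the bookkeeping across the three branches: each regime uses a slightly different argument (Hölder for integrals, a pointwise supremum bound, or direct integration), and in each case one must verify that the constants produced by the computation assemble exactly into the announced norm $\|\chi_\Omega\|_{L^{p_3,q_3}(\Omega)}$ and not a larger multiple of it. Once this is tracked carefully for $q_1<\infty$, the sub-case $q_1=\infty$ (which forces $q_2=\infty$ and $q_3=\infty$ in the admissible configurations) follows from a direct supremum computation along the same lines.
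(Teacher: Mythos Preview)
The paper does not prove this theorem; it is only recorded with a citation to \cite[Theorem 2.2.2]{Cos3}, so there is no in-paper argument to compare against. Your proof is correct and self-contained: the reduction to the scalar case, the explicit computation of $\chi_\Omega^*=\chi_{[0,|\Omega|)}$ and of $\|\chi_\Omega\|_{L^{p_3,q_3}}$, the factorization $t^{1/p_1}=t^{1/p_2}t^{1/p_3}$, and the three-way case split (H\"older with exponents $q_2/q_1$, $q_3/q_1$ against the measure $dt/t$; the sup bound $t^{q_1/p_3}\le|\Omega|^{q_1/p_3}$; and the pointwise bound $t^{1/p_2}g^*(t)\le\|g\|_{L^{p_2,\infty}}$) all assemble with the exact constants claimed. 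The only cosmetic remark is that in the third case you might state explicitly that $q_1=q_3$ is what turns $\int_0^{|\Omega|}t^{q_1/p_3-1}\,dt=(p_3/q_3)|\Omega|^{q_3/p_3}$ into $\|\chi_\Omega\|_{L^{p_3,q_3}}^{q_1}$, but you clearly have this in mind.
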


As an application of Theorem \ref{Holder for Lorentz with general
exponents} we have the following result, valid for all integers $n \ge 1.$
\begin{Corollary}\label{Coro Holder for Lorentz}
{\rm(See Costea \cite[Corollary 2.4]{Cos1} and \cite[Corollary 2.2.3]{Cos3}).}
Let $1<p<q\le \infty$ and $\varepsilon \in (0, p-1)$ be fixed.
Suppose $\Omega \subset \mathbf{R}^n$ has finite measure. Then
\begin{equation}\label{Coro Holder for Lorentz 1}
||f||_{L^{p-\varepsilon}(\Omega; \mathbf{R}^m)} \le
C(p,q,\varepsilon) \,|\Omega|^{\frac{\varepsilon}{p(p-\varepsilon)}}
||f||_{L^{p,q}(\Omega; \mathbf{R}^m)}
\end{equation}
for every integer $m \ge 1,$ where
$$C(p,q,\varepsilon)=\left\{ \begin{array}{cc}
\left(\frac{p(q-p+\varepsilon)}{q}\right)^{\frac{1}{p-\varepsilon}-\frac{1}{q}} \, \varepsilon^{\frac{1}{q}-\frac{1}{p-\varepsilon}},& p<q<\infty \\
p^{\frac{1}{p-\varepsilon}} \,
\varepsilon^{-\frac{1}{p-\varepsilon}}, & q=\infty.
\end{array}
\right.
$$
\end{Corollary}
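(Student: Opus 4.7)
The plan is to invoke Theorem \ref{Holder for Lorentz with general exponents} applied to $f$ and $g = \chi_{\Omega}$, exploiting the identification $L^{s,s}(\Omega) = L^{s}(\Omega)$ (with equal norms, which follows directly from the definition of the $(p,q)$-norm when $q=p$) so that the left-hand side of \eqref{Coro Holder for Lorentz 1} can be recast as a genuine Lorentz norm.

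First I would set $p_1 = q_1 = p - \varepsilon$, so that
$$\|f\|_{L^{p-\varepsilon}(\Omega;\mathbf{R}^m)} = \|f\|_{L^{p_1,q_1}(\Omega;\mathbf{R}^m)},$$
and take $p_2 = p$, $q_2 = q$. The scaling relation $1/p_1 = 1/p_2 + 1/p_3$ then forces
$$p_3 = \frac{p(p-\varepsilon)}{\varepsilon},$$
and the hypothesis $\varepsilon \in (0, p-1)$ is exactly what ensures $p_1, p_3 \in (1, \infty)$. For the secondary index, split into the two cases dictated by the hypothesis of Theorem \ref{Holder for Lorentz with general exponents}. When $p < q < \infty$, solve $1/q_1 = 1/q_2 + 1/q_3$ to obtain $q_3 = q(p-\varepsilon)/(q-p+\varepsilon)$, which lies in $(1,\infty)$ under the standing assumptions. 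When $q = \infty$, fall back on the third alternative $1 \le q_1 = q_3 \le q_2 = \infty$ and take $q_3 = p - \varepsilon$. In both cases Theorem \ref{Holder for Lorentz with general exponents} yields
$$\|f\|_{L^{p-\varepsilon}(\Omega;\mathbf{R}^m)} \le \|f\|_{L^{p,q}(\Omega;\mathbf{R}^m)} \, \|\chi_\Omega\|_{L^{p_3,q_3}(\Omega)}.$$

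It remains to compute $\|\chi_\Omega\|_{L^{p_3,q_3}(\Omega)}$ explicitly. Since $\chi_\Omega^{*}(t) = \chi_{[0,|\Omega|)}(t)$, a direct integration gives
$$\|\chi_\Omega\|_{L^{p_3,q_3}(\Omega)} = \left(\frac{p_3}{q_3}\right)^{1/q_3} |\Omega|^{1/p_3}$$
when $q_3 < \infty$, and the value $|\Omega|^{1/p_3}$ when $q_3 = \infty$. In every case $1/p_3 = \varepsilon/(p(p-\varepsilon))$, which matches the advertised power of $|\Omega|$. Substituting the values of $p_3$ and $q_3$ found above, and simplifying via the identity $1/q_3 = 1/(p-\varepsilon) - 1/q$ in the case $p < q < \infty$, respectively $1/q_3 = 1/(p-\varepsilon)$ in the case $q = \infty$, reproduces the stated formula for $C(p,q,\varepsilon)$. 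The vector-valued conclusion is automatic from the scalar one through the definition $\|f\|_{L^{p,q}(\Omega;\mathbf{R}^m)} = \||f|\|_{L^{p,q}(\Omega)}$. There is no genuine obstacle beyond careful bookkeeping of exponents; the only point that needs attention is selecting the correct alternative in the hypothesis of Theorem \ref{Holder for Lorentz with general exponents} when $q = \infty$, since the additive relation for the $q_i$'s fails in that endpoint.
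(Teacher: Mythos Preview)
Your proposal is correct and follows exactly the route the paper indicates: the corollary is introduced as ``an application of Theorem \ref{Holder for Lorentz with general exponents}'' with the details deferred to \cite{Cos1} and \cite{Cos3}, and you have supplied precisely those details---the choice $p_1=q_1=p-\varepsilon$, $p_2=p$, $q_2=q$, the forced values of $p_3$ and $q_3$, and the explicit evaluation of $\|\chi_\Omega\|_{L^{p_3,q_3}}$. The bookkeeping, including the verification that $p_1,p_3\in(1,\infty)$ and the case split for $q=\infty$, is handled correctly.
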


For the following definition, see Bennett-Sharpley \cite[Definition IV.4.17]{BS}.
\begin{Definition} For every measurable function $f$ on $\mathbf{R}^n,$ $n \ge 2,$
the \textit{fractional integral $I_{1}f$} is defined by
\begin{equation*}
(I_{1}f)(x)=\int_{\mathbf{R}^n} \frac{f(y)}{|x-y|^{n-1}} \, dy.
\end{equation*}
\end{Definition}

We record here the Hardy-Littlewood-Sobolev theorem of fractional
integration. (See Bennett-Sharpley \cite[Theorem IV.4.18]{BS} and Costea \cite[Theorem 2.2.5]{Cos3}).
\begin{Theorem}{\bf Hardy-Littlewood-Sobolev theorem.} \label{Hardy-Littlewood-Sobolev}
Let $1<p<n$ and $1 \le q \le \infty.$ Then there exists a constant $C(n,p,q)>0$ such that
\begin{equation}
||I_{1}f||_{L^{\frac{np}{n-p},q}({\mathbf{R}}^n)} \le C(n,p,q)
||f||_{L^{p,q}({\mathbf{R}}^n)}
\end{equation}
whenever $f \in L^{p,q}({\mathbf{R}}^n).$
\end{Theorem}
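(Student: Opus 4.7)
The plan is to prove the estimate by realizing $I_1$ as a convolution and invoking O'Neil's convolution inequality for Lorentz spaces. Write $I_1 f = K \ast f$ where $K(x) = |x|^{-(n-1)}$. The first step is to locate the kernel $K$ in a Lorentz space by a direct computation of its distribution function: for $t > 0$,
\begin{equation*}
\lambda_{[K]}(t) = |\{x \in \mathbf{R}^n : |x|^{-(n-1)} > t\}| = \Omega_n \, t^{-\frac{n}{n-1}},
\end{equation*}
hence $K^{*}(s) = (\Omega_n/s)^{\frac{n-1}{n}}$ and therefore $K \in L^{\frac{n}{n-1}, \infty}(\mathbf{R}^n)$ with $\|K\|_{L^{n/(n-1),\infty}} = \Omega_n^{(n-1)/n}$. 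This identifies the weak-type endpoint space that powers the whole argument.

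Next, I would invoke O'Neil's convolution inequality: for $f \in L^{p,q}(\mathbf{R}^n)$ and $g \in L^{r,s}(\mathbf{R}^n)$ with $\frac{1}{p}+\frac{1}{r} > 1$, $\frac{1}{p_3} = \frac{1}{p}+\frac{1}{r}-1$, and $\frac{1}{q_3} \le \frac{1}{q}+\frac{1}{s}$, one has
\begin{equation*}
\|f \ast g\|_{L^{p_3,q_3}(\mathbf{R}^n)} \le C \, \|f\|_{L^{p,q}(\mathbf{R}^n)} \, \|g\|_{L^{r,s}(\mathbf{R}^n)}.
\end{equation*}
Applying this with $g = K$, $r = \frac{n}{n-1}$, $s = \infty$ and checking the exponent relations,
\begin{equation*}
\frac{1}{p} + \frac{n-1}{n} - 1 = \frac{1}{p} - \frac{1}{n} = \frac{n-p}{np}, \qquad \frac{1}{q_3} \le \frac{1}{q},
\end{equation*}
one sees that the natural target exponents are $p_3 = \frac{np}{n-p}$ and $q_3 = q$, which is exactly the conclusion of the theorem. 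The constant absorbs $\|K\|_{L^{n/(n-1),\infty}}$ and depends only on $n$, $p$, $q$.

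The main technical ingredient, and therefore the principal obstacle, is the convolution inequality itself. The standard route is via the pointwise bound
\begin{equation*}
(f \ast g)^{**}(t) \le t \, f^{**}(t) \, g^{**}(t) + \int_{t}^{\infty} f^{*}(s) \, g^{*}(s) \, ds,
\end{equation*}
to which one applies Hardy's inequalities on the two pieces after multiplying by $t^{1/p_3}$ and taking $L^{q_3}(dt/t)$ norms; with $g = K$ the quantities $g^{*}(s)$ and $g^{**}(s)$ are explicit powers of $s$, so Hardy's inequalities yield the claim. Care is needed at the endpoints $q = 1$ and $q = \infty$: for $q = 1$ one uses the strong form of Hardy's inequality, and for $q = \infty$ one checks the two terms separately, taking a supremum in $t$ instead of an $L^{q}(dt/t)$ integral. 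Rather than reproduce this machinery, I would cite Bennett--Sharpley \cite[Theorem IV.4.18]{BS}, which gives exactly this result.
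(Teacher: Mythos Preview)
Your proposal is correct. Note, however, that the paper does not give its own proof of this theorem: it is merely recorded with a citation to Bennett--Sharpley \cite[Theorem IV.4.18]{BS} (and to \cite[Theorem 2.2.5]{Cos3}), so there is no argument in the paper to compare against. Your sketch via the weak-type membership $K\in L^{n/(n-1),\infty}(\mathbf{R}^n)$ together with O'Neil's convolution inequality is exactly the standard route used in Bennett--Sharpley, and your exponent bookkeeping is accurate; the final appeal to \cite[Theorem IV.4.18]{BS} simply lands on the same reference the paper already cites.
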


\section{Sobolev-Lorentz Spaces} \label{Section Sobolev Lorentz spaces}

This section is based in part on Chapter V of my PhD thesis \cite{Cos0} and on Chapter 3 of my book \cite{Cos3}. We generalize and extend some of the results from \cite{Cos0} and \cite{Cos3} to the case $n=1.$

Among the new results in this section we mention the case $q=\infty$ for Theorems \ref{H=W revisited} and \ref{H=H_0 revisited} as well as the inclusion $W^{1,(p,q)}(\Omega) \subsetneq W^{1,(p,s)}(\Omega),$ where $\Omega \subset {\mathbf{R}}^n$ is open, $n \ge 1$ is an integer, $1<p<\infty$ and $1 \le q<s \le \infty.$

\subsection{The $H^{1, (p,q)}$ and $W^{1, (p,q)}$ Spaces}

For $1<p<\infty$ and $1\le q \le \infty$ we define the Sobolev-Lorentz space
$H^{1, (p,q)}(\Omega)$ as follows.
Let $r=\min(p,q).$ For a function $\phi \in
C^{\infty}(\Omega)$ we define its Sobolev-Lorentz $(p,q)$-norm by
$$||\phi||_{1, (p,q); \Omega}=\left(||\phi||_{L^{(p,q)}(\Omega)}^{r}
+||\nabla \phi||_{L^{(p,q)}(\Omega;
\mathbf{R}^n)}^{r}\right)^{\frac{1}{r}},$$ where, we recall, $\nabla
\phi=(\partial_1 \phi, \ldots,
\partial_n \phi)$ is the gradient of $\phi.$
Similarly we define the Sobolev-Lorentz $p,q$-quasinorm of $\phi$ by
$$||\phi||_{1, p,q; \Omega}=\left(||\phi||_{L^{p,q}(\Omega)}^{r}
+||\nabla \phi||_{L^{p,q}(\Omega;
\mathbf{R}^n)}^{r}\right)^{\frac{1}{r}},$$
Then $H^{1, (p,q)}(\Omega)$ is defined as the completion of
$$\{\phi \in C^{\infty}(\Omega): ||\phi||_{1, (p,q); \Omega} < \infty \}$$
with respect to the norm $||\cdot||_{1, (p,q); \Omega}.$
Throughout the paper we might also use $||\cdot||_{H^{1, (p,q)}(\Omega)}$ instead of $||\cdot||_{1, (p,q); \Omega}$ and $||\cdot||_{H^{1, p,q}(\Omega)}$ instead of $||\cdot||_{1, p,q; \Omega}.$

The Sobolev-Lorentz space $H_{0}^{1, (p,q)}(\Omega)$ is defined as the closure of
$C_{0}^{\infty}(\Omega)$ in $H^{1, (p,q)}(\Omega)$. The
Sobolev-Lorentz spaces $H_{0}^{1, (p,q)}(\Omega)$ and $H^{1,
(p,q)}(\Omega)$ can be both regarded as closed subspaces of
$L^{(p,q)}(\Omega) \times L^{(p,q)}(\Omega; \mathbf{R}^n).$  Since
$L^{(p,q)}(\Omega) \times L^{(p,q)}(\Omega; \mathbf{R}^n)$ is
reflexive when $1<q< \infty,$  it follows that both $H_{0}^{1,
(p,q)}(\Omega)$ and $H^{1, (p,q)}(\Omega)$ are reflexive Banach
spaces when $1<q<\infty$ and have absolutely continuous norm when
$1\le q<\infty.$ In particular, $u \in L^{(p,q)}(\Omega)$ and
$\nabla u \in L^{(p,q)}(\Omega; \mathbf{R}^n)$ have absolutely
continuous $(p,q)$-norm whenever $1<p<\infty$ and $1\le
q<\infty.$

For $q=1$ we have that $(L^{(p,1)}(\Omega))^{*} \times
(L^{(p,1)}(\Omega; \mathbf{R}^n))^{*}$ can be regarded as a subspace
of $(H^{1,(p,1)}(\Omega))^{*}$ and since $(L^{(p,1)}(\Omega))^{*}
\times (L^{(p,1)}(\Omega; \mathbf{R}^n))^{*}$ can be identified with
the non-reflexive space $L^{(p',\infty)}(\Omega) \times
L^{(p',\infty)}(\Omega; \mathbf{R}^n),$ it follows that
$H^{1,(p,1)}(\Omega)$ is non-reflexive and so is
$H_{0}^{1,(p,1)}(\Omega),$ since it is a closed subspace of
$H^{1,(p,1)}(\Omega).$ It will be proved later in Theorem \ref{H1pinfty subsetneq W1pinfty} that none of these two spaces is reflexive when $q=\infty.$

Next we record the following reflexivity result, valid for all $n \ge 1$ and for all $q \in (1,\infty).$

\begin{Theorem} \label{HKM93 Thm132}
\rm{(See Costea \cite[Theorem V.22]{Cos0} and \cite[Theorem 3.5.4]{Cos3}).}
Let $1<p,q<\infty.$ Suppose that $u_j$ is a bounded sequence in $H^{1,(p,q)}(\Omega)$ such that $u_j \rightarrow u$ pointwise almost everywhere in $\Omega.$ Then $u \in H^{1,(p,q)}(\Omega).$ Moreover, if $u_j \in H_{0}^{1,(p,q)}(\Omega)$ for all $j \ge 1,$ then $u \in H_{0}^{1, (p,q)}(\Omega).$
\end{Theorem}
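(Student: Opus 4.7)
The plan is to exploit reflexivity of $H^{1,(p,q)}(\Omega)$ for $1<p,q<\infty$, which was already recorded in the text above. Since $\{u_j\}$ is a bounded sequence in the reflexive Banach space $H^{1,(p,q)}(\Omega)$, I can extract a subsequence $\{u_{j_k}\}$ which converges weakly in $H^{1,(p,q)}(\Omega)$ to some limit $v$; in particular $u_{j_k} \rightharpoonup v$ in $L^{(p,q)}(\Omega)$ and $\nabla u_{j_k} \rightharpoonup \nabla v$ in $L^{(p,q)}(\Omega; \mathbf{R}^n)$.

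To identify $v$ with $u$, I would apply Mazur's lemma to produce a sequence of convex combinations
\begin{equation*}
w_k = \sum_{i=k}^{N_k} \lambda_{k,i}\, u_{j_i}, \qquad \lambda_{k,i}\ge 0,\ \sum_{i=k}^{N_k} \lambda_{k,i}=1,
\end{equation*}
which converges to $v$ strongly in $H^{1,(p,q)}(\Omega)$, hence in $L^{(p,q)}(\Omega)$. Passing to yet another subsequence, one obtains $w_k \to v$ almost everywhere in $\Omega$. On the other hand, the hypothesis $u_j \to u$ a.e. implies $u_{j_i} \to u$ a.e., and since each $w_k$ is a finite convex combination of the $u_{j_i}$ with $i\ge k$, one still has $w_k \to u$ a.e.\ in $\Omega$. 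Comparing the two a.e.\ limits gives $u=v$ a.e., so $u \in H^{1,(p,q)}(\Omega)$.

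For the final assertion, I would use the fact that $H_{0}^{1,(p,q)}(\Omega)$ is by definition a closed subspace of $H^{1,(p,q)}(\Omega)$, hence convex and norm-closed, hence weakly closed (Mazur). Therefore if every $u_j$ lies in $H_{0}^{1,(p,q)}(\Omega)$, the weak limit $v = u$ lies there as well, yielding $u \in H_{0}^{1,(p,q)}(\Omega)$.

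The only potentially delicate point is the identification of the weak limit with the a.e.\ limit: the Mazur-combination argument above handles this cleanly because convex combinations preserve pointwise convergence on the set of full measure where $u_{j_i}\to u$. All other ingredients are standard consequences of the reflexivity and the Banach-space structure of $H^{1,(p,q)}(\Omega)$ and $H_{0}^{1,(p,q)}(\Omega)$ already established in the preceding subsection, so no further computation with Lorentz norms is needed.
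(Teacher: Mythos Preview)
Your proof is correct and is exactly the standard reflexivity/Mazur argument; the paper itself does not supply a proof but only refers to \cite[Theorem V.22]{Cos0}, \cite[Theorem 3.5.4]{Cos3} and (via the label) to \cite[Theorem 1.32]{HKM}, all of which proceed in precisely this way. There is nothing to add: extracting a weakly convergent subsequence, upgrading to strong convergence of convex combinations via Mazur, identifying the a.e.\ limit with the strong (hence, along a further subsequence, a.e.) limit, and using weak closedness of the closed subspace $H_0^{1,(p,q)}(\Omega)$ is the intended argument.
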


The following theorem generalizes the Gagliardo-Nirenberg-Sobolev inequality to the Sobolev-Lorentz spaces $H_{0}^{1,(p,q)}(\Omega)$ for $1<p<n$ and $1 \le q \le \infty.$ It also presents a Sobolev-Poincar\'{e} inequality for the Sobolev-Lorentz spaces $H_{0}^{1,(p,q)}(\Omega)$ when $\Omega \subset {\mathbf{R}}^n$ is open and bounded, $1<p<\infty$ and $1 \le q \le \infty.$

\begin{Theorem} \label{Sobolev-Poincare for Sobolev-Lorentz}
{\bf Sobolev inequalities for Sobolev-Lorentz spaces.}

Let $\Omega \subset \mathbf{R}^n$ be an open set, where $n \ge 2$ is an integer. Suppose $1<p<\infty$ and $1\le q \le \infty.$

\par {\rm(i)} If $1<p<n,$ then there exists a constant $C(n,p,q)>0$ such that
$$||u||_{L^{\frac{np}{n-p}, q}(\Omega)} \le C(n,p,q) ||\nabla u||_{L^{p,q}(\Omega; {\mathbf{R}^n})}$$
for every $u \in H_{0}^{1,(p,q)}(\Omega).$

\par {\rm(ii)} {\rm(See Costea \cite[Theorem 3.1.1]{Cos3}).} If $\Omega$ is bounded, then there exists a constant $C(n,p,q)>0$ such that
\begin{equation} \label{Sobolev-Poincare for Sobolev-Lorentz 1}
||u||_{L^{p,q}(\Omega)} \le  C(n,p,q) \, |\Omega|^{\frac{1}{n}}
||\nabla u||_{L^{p,q}(\Omega; \mathbf{R}^n)}
\end{equation}
for every $u \in H_{0}^{1,(p,q)}(\Omega).$
\end{Theorem}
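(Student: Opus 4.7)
The plan is to derive part (i) from the pointwise Riesz-potential representation of compactly supported smooth functions, combined with the Hardy--Littlewood--Sobolev Theorem \ref{Hardy-Littlewood-Sobolev}, and then to pass to arbitrary $u \in H_{0}^{1,(p,q)}(\Omega)$ using the definition of $H_{0}^{1,(p,q)}(\Omega)$ as the $H^{1,(p,q)}$-closure of $C_{0}^{\infty}(\Omega).$ Part (ii) is stated as Theorem 3.1.1 of \cite{Cos3} and I would quote that proof; in the range $1<p<n,$ however, (ii) also follows from (i) via a short H\"older argument, which I sketch at the end.

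For (i), I first take $\varphi \in C_{0}^{\infty}(\Omega)$ and extend it by zero to an element of $C_{0}^{\infty}(\mathbf{R}^{n}).$ The classical pointwise bound $|\varphi(x)| \le c_{n} (I_{1}|\nabla \varphi|)(x)$ for every $x \in \mathbf{R}^{n}$ (obtained by integrating $\partial_{r}\varphi$ along rays from infinity and averaging over the unit sphere) reduces matters to a mapping property of the fractional integral. By Theorem \ref{Hardy-Littlewood-Sobolev}, $I_{1}$ is bounded from $L^{p,q}(\mathbf{R}^{n})$ into $L^{\frac{np}{n-p},q}(\mathbf{R}^{n})$ with constant $C(n,p,q).$ Chaining these two facts yields $||\varphi||_{L^{\frac{np}{n-p},q}(\Omega)} \le C(n,p,q) ||\nabla \varphi||_{L^{p,q}(\Omega; \mathbf{R}^{n})}$ for every $\varphi \in C_{0}^{\infty}(\Omega).$

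It remains to pass from smooth compactly supported functions to a general $u \in H_{0}^{1,(p,q)}(\Omega).$ By definition there exist $u_{k} \in C_{0}^{\infty}(\Omega)$ with $u_{k} \to u$ in $L^{(p,q)}(\Omega)$ and $\nabla u_{k} \to \nabla u$ in $L^{(p,q)}(\Omega;\mathbf{R}^{n}).$ When $1 \le q < \infty,$ absolute continuity of the $(p,q)$-norm makes the limit immediate. The case $q=\infty$ is the principal obstacle, because $C_{0}^{\infty}$ is \emph{not} dense in $L^{(p,\infty)}.$ Here I would exploit that $L^{p,\infty}$-norm convergence implies convergence in measure (since $|\{|u_{k}-u|>t\}| \le t^{-p} \, ||u_{k}-u||_{L^{p,\infty}}^{p} \to 0$ for every $t>0$), pass to a subsequence with $u_{k_{j}} \to u$ almost everywhere, and then apply Fatou's lemma to $\chi_{\{|\cdot|>\lambda\}}$ to get $|\{|u|>\lambda\}| \le \liminf_{j} |\{|u_{k_{j}}|>\lambda\}|$ for every $\lambda>0,$ which yields the lower semicontinuity $||u||_{L^{\frac{np}{n-p},\infty}(\Omega)} \le \liminf_{j} ||u_{k_{j}}||_{L^{\frac{np}{n-p},\infty}(\Omega)}.$ Combined with the smooth case, this completes the proof of (i).

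For part (ii) in the range $1<p<n,$ I would derive it from (i) via Theorem \ref{Holder for Lorentz with general exponents} with exponent triple $\left(p, \tfrac{np}{n-p}, n\right)$ and parameter triple $(q,q,\infty)$: the identity $\tfrac{1}{p}=\tfrac{n-p}{np}+\tfrac{1}{n}$ together with the case $q_{1}=q_{2} \le q_{3}=\infty$ give $||u||_{L^{p,q}(\Omega)} \le ||u||_{L^{\frac{np}{n-p},q}(\Omega)} \, ||\chi_{\Omega}||_{L^{n,\infty}(\Omega)} = |\Omega|^{\frac{1}{n}} \, ||u||_{L^{\frac{np}{n-p},q}(\Omega)},$ and (i) finishes the job. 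The complementary range $p \ge n$ requires a separate argument (for instance a truncated Riesz-potential bound on a ball containing $\Omega$ together with a direct Young-type estimate in Lorentz spaces), for which I would defer to \cite[Theorem 3.1.1]{Cos3}.
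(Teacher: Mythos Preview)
Your proof of (i) is correct and follows the same route as the paper: the pointwise Riesz-potential bound $|u(x)|\le c_n(I_1|\nabla u|)(x)$ (this is \cite[Lemma 7.14]{GT}) followed by Theorem \ref{Hardy-Littlewood-Sobolev}, and then passage to general $u\in H_0^{1,(p,q)}(\Omega)$ by approximation. Your explicit Fatou/lower-semicontinuity argument for $q=\infty$ is fine; the paper simply cites \cite[Corollary 2.7]{Cos1} for this step uniformly in $q$.

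For (ii) in the range $1<p<n$ your derivation via Theorem \ref{Holder for Lorentz with general exponents} is exactly what the paper does. For $p\ge n$, however, the paper does \emph{not} defer to \cite{Cos3} or resort to a truncated Riesz potential with a Young-type estimate as you suggest; it gives a short self-contained argument that you are missing. One chooses an auxiliary $s\in(1,n)$ with $p<\frac{ns}{n-s}$ and applies part (i) at the exponent $s$ rather than $p$. Theorem \ref{Holder for Lorentz with general exponents} is used twice: once to pass from $\|u\|_{L^{p,q}(\Omega)}$ down to $\|u\|_{L^{\frac{ns}{n-s},q}(\Omega)}$, picking up $|\Omega|^{\frac{1}{p}-\frac{n-s}{ns}}$, and once more after part (i) to pass from $\|\nabla u\|_{L^{s,q}(\Omega;\mathbf{R}^n)}$ up to $\|\nabla u\|_{L^{p,q}(\Omega;\mathbf{R}^n)}$, picking up $|\Omega|^{\frac{1}{s}-\frac{1}{p}}$. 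The two powers sum to $\frac{1}{n}$, giving exactly (\ref{Sobolev-Poincare for Sobolev-Lorentz 1}). This auxiliary-exponent trick is considerably simpler than the alternative you sketch.
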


\begin{proof} We have that $H_{0}^{1,(p,q)}(\Omega)$ is the closure of $C_{0}^{\infty}(\Omega)$ in $H^{1,(p,q)}(\Omega).$ Thus, via Costea \cite[Corollary 2.7]{Cos1}, it is enough to prove
claims (i) and (ii) for functions $u \in C_{0}^{\infty}(\Omega).$

Let $u$ be in $C_{0}^{\infty}(\Omega).$ We extend the function $u$ by $0$ on ${\mathbf{R}}^n \setminus \Omega$ and we denote this extension by $u$ as well. Then $u$ is in $C_{0}^{\infty}({\mathbf{R}}^n)$ and $u$ is compactly supported in $\Omega.$ Via Gilbarg-Trudinger \cite[Lemma 7.14]{GT}, we have
$$|u(x)| \le \frac{1}{\omega_{n-1}} (I_1|\nabla u|)(x)$$
for every $x \in {\mathbf{R}}^n.$ By using this pointwise inequality together with the Hardy-Littlewood-Sobolev Theorem (see Theorem \ref{Hardy-Littlewood-Sobolev} and Bennett-Sharpley \cite[Theorem IV.4.18]{BS}) it follows immediately that claim (i) holds for all functions $u \in C_{0}^{\infty}(\Omega).$ This proves claim (i) via Costea \cite[Corollary 2.7]{Cos1}.

\vskip 2mm

We prove now claim (ii). We have to consider two cases, depending on whether $1<p<n$ or $n \le p<\infty.$

Case I. First we assume that $1<p<n.$ We notice that $p<\frac{np}{n-p}.$ Via Theorems \ref{Holder for Lorentz with general exponents} and \ref{Hardy-Littlewood-Sobolev} it follows from part (i) that
\begin{equation*}
||u||_{L^{p,q}(\Omega)} \le |\Omega|^{\frac{1}{n}} ||u||_{L^{\frac{np}{n-p},q}(\Omega)}
\le \frac{C(n,p,q)}{\omega_{n-1}} |\Omega|^{\frac{1}{n}} ||\nabla u||_{L^{p,q}(\Omega; {\mathbf{R}^n})}
\end{equation*}
for every $u \in C_{0}^{\infty}(\Omega),$ where $C(n,p,q)$ is the constant from Theorem \ref{Hardy-Littlewood-Sobolev}. This proves the claim (ii) for $1<p<n$ via Costea \cite[Corollary 2.7]{Cos1}.

Case II. We assume now that $1<n \le p<\infty.$ We choose $s \in (1,n)$ such that $p<\frac{ns}{n-s}.$
Via Theorems \ref{Holder for Lorentz with general exponents} and \ref{Hardy-Littlewood-Sobolev} it follows from part (i) that
\begin{eqnarray*}
||u||_{L^{p,q}(\Omega)} &\le& |\Omega|^{\frac{1}{p}-\frac{n-s}{ns}} ||u||_{L^{\frac{ns}{n-s},q}(\Omega)}\\
&\le& \frac{C(n,s,q)}{\omega_{n-1}} |\Omega|^{\frac{1}{p}-\frac{n-s}{ns}} ||\nabla u||_{L^{s,q}(\Omega; {\mathbf{R}^n})}\\
&\le& \frac{C(n,s,q)}{\omega_{n-1}} |\Omega|^{\frac{1}{n}} ||\nabla u||_{L^{p,q}(\Omega; {\mathbf{R}^n})}
\end{eqnarray*}
for every $u \in C_{0}^{\infty}(\Omega),$ where $C(n,s,q)$ is the constant from Theorem \ref{Hardy-Littlewood-Sobolev}. This proves the claim (ii) for $1<n \le p<\infty$ via Costea \cite[Corollary 2.7]{Cos1}. This finishes the proof of the theorem.

\end{proof}

We recall that for $1<p<\infty,$ $H^{1,p}(\Omega)$ is defined as the
closure of $C^{\infty}(\Omega)$ with respect to the $||\cdot||_{1,p;
\Omega}$-norm, where
\begin{equation*}
||\psi||_{1,p; \Omega}=\left(\int_{\Omega} |\psi(x)|^{p} dx +
\int_{\Omega} |\nabla \psi(x)|^{p} dx \right)^{\frac{1}{p}}
\end{equation*}
for every $\psi \in C^{\infty}(\Omega).$ We recall that
$H^{1,p}_{loc}(\Omega)$ is defined in the obvious manner: a
measurable function $u: \Omega \rightarrow \mathbf{R}$ is in
$H^{1,p}_{loc}(\Omega)$ if and only if $u$ is in
$H^{1,p}_{loc}(\Omega')$ for every open set $\Omega' \subset \subset
\Omega.$

Let $u \in L_{loc}^{1}(\Omega).$ For $i=1,\ldots,n$ a function $v
\in L_{loc}^{1}(\Omega)$ is called the \textit{$i$th weak partial
derivative of} $u$ and we denote $v= \partial_{i} u$ if
$$\int_{\Omega} \varphi \, v \,dx=-\int_{\Omega} \partial_{i}\varphi \, u \,dx $$
for all $\varphi \in C_{0}^{\infty}(\Omega).$ Recall that
\begin{equation*}
W^{1,p}(\Omega)=L^{p}(\Omega) \cap \{ u: \partial_{i} u \in
L^{p}(\Omega), \, i=1,\ldots,n \}.
\end{equation*}
The space $W^{1,p}(\Omega)$ is equipped with the norm
\begin{equation*}
||u||_{W^{1,p}(\Omega)}=||u||_{L^{p}(\Omega)}+ \sum_{i=1}^{n}
||\partial_{i} u||_{L^{p}(\Omega)},
\end{equation*}
which is clearly equivalent to
\begin{equation*}
 \left(||u||_{L^{p}(\Omega)}^p+ ||\nabla u||_{L^{p}(\Omega;
\mathbf{R}^n)}^p\right)^{\frac{1}{p}}.
\end{equation*}
Here $\nabla u$ is the distributional gradient of $u.$
We recall that $W^{1,p}(\Omega)=H^{1,p}(\Omega).$

We define the Sobolev-Lorentz space $W^{1,(p,q)}(\Omega)$ by
\begin{equation*}
W^{1,(p,q)}(\Omega)=L^{(p,q)}(\Omega) \cap \{ u: \partial_{i} u \in
L^{(p,q)}(\Omega), \, i=1, \ldots, n \}.
\end{equation*}
The space $W^{1,(p,q)}(\Omega)$ is equipped with the norm
\begin{equation*}
||u||_{W^{1, (p,q)}(\Omega)}=||u||_{L^{(p,q)}(\Omega)}+
\sum_{i=1}^{n} ||\partial_{i} u||_{L^{(p,q)}(\Omega)},
\end{equation*}
which is clearly equivalent to
\begin{equation*}
 \left(||u||_{L^{(p,q)}(\Omega)}^r+ ||\nabla u||_{L^{(p,q)}(\Omega;
\mathbf{R}^n)}^r\right)^{\frac{1}{r}},
\end{equation*}
where $r=\min(p,q).$ As earlier, it is easy to see that $W^{1,(p,q)}(\Omega)$ is a
reflexive Banach space when $1<q<\infty$ and a non-reflexive Banach space when $q=1.$
It will be proved later in Theorem \ref{H1pinfty subsetneq W1pinfty} that
$W^{1,(p,\infty)}(\Omega)$ is not reflexive.

The corresponding local space $H_{loc}^{1, (p,q)}(\Omega)$ is
defined in the obvious manner:  $u$ is in $H_{loc}^{1,
(p,q)}(\Omega)$ if and only if $u$ is in $H^{1, (p,q)}(\Omega')$ for
every open set $\Omega' \subset \subset \Omega.$

Similarly, the local space $W_{loc}^{1, (p,q)}(\Omega)$ is
defined as follows:  $u$ is in $W_{loc}^{1,
(p,q)}(\Omega)$ if and only if $u$ is in $W^{1, (p,q)}(\Omega')$ for
every open set $\Omega' \subset \subset \Omega.$

The following theorem shows, among other things, the relation between
$W^{1,(p,q)}(\Omega)$ and $H_{loc}^{1,s}(\Omega),$ where $1<s<p<\infty$ and $1\le q \le
\infty.$

\begin{Theorem} \label{W1pqloc included in H1sloc s<p}
Let $\Omega \subset {\mathbf{R}}^n$ be an open set, where $n \ge 1$ is an integer.
Let $1<s<p<\infty$ and $1\le q<r \le \infty.$

\par {\rm(i)} We have $W^{1,(p,q)}(\Omega) \subset H_{loc}^{1,s}(\Omega).$ Moreover,
if $\Omega$ has finite Lebesgue measure (in particular if $\Omega$
is bounded), then $W^{1,(p,q)}(\Omega) \subset H^{1,s}(\Omega).$

\par {\rm(ii)} If $\Omega$ is bounded, then $H_{0}^{1,(p,q)}(\Omega) \subset H_{0}^{1,s}(\Omega).$

\par {\rm(iii)} We have $H_{0}^{1,(p,q)}(\Omega) \subset H_{0}^{1,(p,r)}(\Omega),$ $H^{1,(p,q)}(\Omega) \subset H^{1,(p,r)}(\Omega),$ and $W^{1,(p,q)}(\Omega) \subset W^{1,(p,r)}(\Omega).$

\end{Theorem}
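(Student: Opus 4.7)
The plan is to derive all three inclusions from the Lorentz-space norm comparisons already established in the paper, namely Remark \ref{relation between Lpr and Lps} (the continuous inclusion $L^{p,q}\subset L^{p,r}$ when $q<r$) and Corollary \ref{Coro Holder for Lorentz} (the embedding $L^{p,q}(\Omega)\subset L^{p-\varepsilon}(\Omega)$ on sets of finite measure).

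For (i), fix $u\in W^{1,(p,q)}(\Omega)$ and an arbitrary open set $\Omega'\subset\subset\Omega$, so $|\Omega'|<\infty$. Remark \ref{relation between Lpr and Lps} gives $L^{p,q}(\Omega')\subset L^{p,\infty}(\Omega')$ continuously. Applying Corollary \ref{Coro Holder for Lorentz} with parameter $q$ replaced by $\infty$ and $\varepsilon:=p-s\in(0,p-1)$ yields
\[
\|u\|_{L^{s}(\Omega')}\le C\,|\Omega'|^{(p-s)/(ps)}\,\|u\|_{L^{p,q}(\Omega')},
\]
and the same estimate for each distributional partial derivative $\partial_i u$. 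Hence $u\in W^{1,s}(\Omega')=H^{1,s}(\Omega')$, and since $\Omega'$ was arbitrary, $u\in H^{1,s}_{\mathrm{loc}}(\Omega)$. When $|\Omega|<\infty$ the same argument applied directly on $\Omega$ gives $u\in H^{1,s}(\Omega)$.

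For (ii), let $u\in H_0^{1,(p,q)}(\Omega)$ with $\Omega$ bounded, and pick $\varphi_k\in C_0^{\infty}(\Omega)$ converging to $u$ in the $H^{1,(p,q)}$-norm. The estimate just established shows that the inclusion $H^{1,(p,q)}(\Omega)\hookrightarrow H^{1,s}(\Omega)$ is continuous for bounded $\Omega$, so $\varphi_k\to u$ in $H^{1,s}(\Omega)$ as well. Each $\varphi_k$ lies in $C_0^{\infty}(\Omega)\subset H_0^{1,s}(\Omega)$, and $H_0^{1,s}(\Omega)$ is closed in $H^{1,s}(\Omega)$ by definition, so $u\in H_0^{1,s}(\Omega)$.

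For (iii), Remark \ref{relation between Lpr and Lps} provides a continuous inclusion $L^{(p,q)}(\Omega)\subset L^{(p,r)}(\Omega)$. Applying this to $u$ and to each $\partial_i u$ gives $W^{1,(p,q)}(\Omega)\subset W^{1,(p,r)}(\Omega)$ at once. For the $H$-spaces, take an approximating sequence $\varphi_k\in C^{\infty}(\Omega)$ (respectively $\varphi_k\in C_0^{\infty}(\Omega)$) with $\varphi_k\to u$ in the $H^{1,(p,q)}$-norm; continuity of the Lorentz inclusion makes $\{\varphi_k\}$ Cauchy in the $H^{1,(p,r)}$-norm, so it converges to some $v$ in $H^{1,(p,r)}(\Omega)$ (resp.\ $H_0^{1,(p,r)}(\Omega)$). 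Since $H^{1,(p,q)}$ is realized as a subspace of $L^{(p,q)}(\Omega)\times L^{(p,q)}(\Omega;\mathbf{R}^n)$, the convergence $\varphi_k\to u$ there forces $(\varphi_k,\nabla\varphi_k)\to(u,\nabla u)$ also in $L^{(p,r)}(\Omega)\times L^{(p,r)}(\Omega;\mathbf{R}^n)$, and uniqueness of limits identifies the limit pair $(v,\nabla v)$ with $(u,\nabla u)$.

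The whole argument is essentially bookkeeping from the previously recorded Lorentz-space embeddings. The only mildly delicate point is the identification step in (iii) for the $H$-spaces, where one must invoke the realization of $H^{1,(p,q)}$ inside $L^{(p,q)}\times L^{(p,q)}(\Omega;\mathbf{R}^n)$ to conclude that the Cauchy limit in the larger space really coincides with $u$.
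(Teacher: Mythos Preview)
Your proposal is correct and follows essentially the same route as the paper: the paper's own proof simply cites the earlier Lorentz-space embeddings (Remark \ref{relation between Lpr and Lps} and Corollary \ref{Coro Holder for Lorentz}) together with the definitions of the Sobolev--Lorentz spaces, and defers the details of (i) to \cite{Cos0,Cos3}. Your write-up spells out those details explicitly---in particular the passage through $L^{p,\infty}$ to make Corollary \ref{Coro Holder for Lorentz} applicable for all $q\in[1,\infty]$, and the approximation/closedness arguments for the $H$- and $H_0$-spaces---which is exactly what one would find in the referenced proofs.
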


\begin{proof} For claim (i), see Costea \cite[Theorem V.2]{Cos0} and \cite[Theorem 3.1.2]{Cos3}.
From either of these two references we can copy almost verbatim the proof, valid also for $n=1.$

Claims (ii) and (iii) follow immediately from Remark \ref{relation between Lpr and Lps}, Corollary \ref{Coro Holder for Lorentz}, and the definition of the Sobolev-Lorentz spaces on $\Omega.$

\end{proof}

We record the following theorem, which shows that every Sobolev element $u$ in
$H_{loc}^{1,(p,q)}(\Omega)$ is a distribution.

\begin{Theorem} \label{H included in W}
{\rm(See Costea \cite[Theorem V.3]{Cos0} and \cite[Theorem 3.1.3]{Cos3}).}
Suppose $1<p<\infty$ and $1\le q \le \infty.$
Let $u$ be in $H_{loc}^{1,(p,q)}(\Omega).$ Then $u$ is a
distribution with distributional gradient $\nabla u \in
L_{loc}^{1}(\Omega; \mathbf{R}^n).$ Moreover, $u \in
L_{loc}^{(p,q)}(\Omega) \subset L_{loc}^{1}(\Omega)$ and

\begin{equation*}
\int_{\Omega} u \, \partial_{i} \varphi \, dx= -\int_{\Omega}
\partial_{i}u \, \varphi \, dx
\end{equation*} for all $\varphi \in C_{0}^{\infty}(\Omega)$ and $i=1, \ldots, n,$
where $\partial_{i}u$ is the $i$th coordinate of $\nabla u.$ In
particular, $H^{1,(p,q)}(\Omega) \subset W^{1,(p,q)}(\Omega).$
\end{Theorem}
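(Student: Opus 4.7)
The plan is to exploit the definition of $H^{1,(p,q)}_{loc}(\Omega)$ as a local completion of smooth functions, and to pass the classical integration by parts formula to the limit using the Hölder inequality for Lorentz spaces (Theorem \ref{Holder for Lorentz}). Throughout, the role of $L^{(p,q)}$ and $L^{p,q}$ is interchangeable since these (quasi)norms are equivalent.

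First I would verify that $u \in L^{(p,q)}_{loc}(\Omega) \subset L^1_{loc}(\Omega).$ For any $\Omega' \subset\subset \Omega,$ the hypothesis gives $u \in H^{1,(p,q)}(\Omega'),$ so in particular $u \in L^{(p,q)}(\Omega').$ Since $|\Omega'|<\infty,$ applying Corollary \ref{Coro Holder for Lorentz} with some $\varepsilon \in (0,p-1)$ (or equivalently Theorem \ref{Holder for Lorentz with general exponents} pairing $u$ with $\chi_{\Omega'} \in L^{p',\infty}(\Omega')$) yields $u \in L^{p-\varepsilon}(\Omega') \subset L^1(\Omega').$ The same reasoning applied to each coordinate $\partial_i u$ of the gradient (understood for now as the $L^{(p,q)}(\Omega';\mathbf{R}^n)$-limit of $\nabla u_k$ coming from the completion definition) puts $\partial_i u \in L^1_{loc}(\Omega).$

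Next I would prove the integration by parts formula. Fix $\varphi \in C_0^\infty(\Omega)$ and $i \in \{1,\ldots,n\}.$ Choose an open set $\Omega'$ with $\mathrm{supp}\,\varphi \subset \Omega' \subset\subset \Omega,$ so that $\varphi \in C_0^\infty(\Omega').$ By definition of $H^{1,(p,q)}(\Omega'),$ there is a sequence $u_k \in C^\infty(\Omega')$ with $u_k \to u$ in $L^{(p,q)}(\Omega')$ and $\partial_i u_k \to \partial_i u$ in $L^{(p,q)}(\Omega').$ For each $k,$ standard integration by parts (since $\varphi$ has compact support in $\Omega'$) gives
\begin{equation*}
\int_{\Omega'} u_k \, \partial_i \varphi \, dx = -\int_{\Omega'} \partial_i u_k \, \varphi \, dx.
\end{equation*}
Both $\varphi$ and $\partial_i \varphi$ are bounded with support in $\Omega',$ hence lie in $L^{p',q'}(\Omega')$ (again by Theorem \ref{Holder for Lorentz with general exponents}, pairing with $\chi_{\mathrm{supp}\,\varphi}$). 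By Theorem \ref{Holder for Lorentz},
\begin{equation*}
\left| \int_{\Omega'} (u_k - u)\, \partial_i \varphi \, dx \right| \le \|u_k - u\|_{L^{p,q}(\Omega')}\, \|\partial_i \varphi\|_{L^{p',q'}(\Omega')} \longrightarrow 0,
\end{equation*}
and similarly $\int \partial_i u_k \cdot \varphi \to \int \partial_i u \cdot \varphi.$ Since $\varphi$ vanishes outside $\Omega',$ the integrals over $\Omega'$ equal the integrals over $\Omega,$ and passing to the limit yields the desired identity
\begin{equation*}
\int_{\Omega} u \, \partial_i \varphi \, dx = -\int_{\Omega} \partial_i u \, \varphi \, dx.
\end{equation*}

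This identity says precisely that $u$ is a distribution with distributional derivative $\partial_i u$ (already an $L^1_{loc}$, even $L^{(p,q)}_{loc},$ function). The final inclusion $H^{1,(p,q)}(\Omega) \subset W^{1,(p,q)}(\Omega)$ is then immediate: $u \in H^{1,(p,q)}(\Omega)$ is a fortiori in $H^{1,(p,q)}_{loc}(\Omega),$ the above shows its distributional gradient coincides with the Sobolev-Lorentz gradient, and both $u$ and $\nabla u$ lie globally in $L^{(p,q)}$ by definition of $H^{1,(p,q)}(\Omega).$ The only subtle point is the limit passage, which hinges on having $\partial_i \varphi \in L^{p',q'}_{loc}(\Omega)$; this is the role played by $\varphi$ being smooth and compactly supported and is precisely why the argument goes through uniformly in $q \in [1,\infty],$ including the endpoint $q=\infty.$
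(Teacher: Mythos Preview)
Your proof is correct and follows the standard route: approximate locally by smooth functions from the definition of $H^{1,(p,q)}$, use classical integration by parts on the approximants, and pass to the limit via the H\"older inequality for Lorentz spaces (Theorem \ref{Holder for Lorentz}), noting that $\varphi$ and $\partial_i\varphi$ are bounded with compact support and hence lie in $L^{p',q'}(\Omega')$. The paper does not actually supply a proof of this theorem; it merely records the statement and cites \cite[Theorem V.3]{Cos0} and \cite[Theorem 3.1.3]{Cos3}, where the argument is essentially the one you wrote.
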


\subsection{Regularization}

We need some basic properties of the Sobolev-Lorentz spaces. Before
proceeding we recall the usual regularization procedure.

Let $\eta \in C_{0}^{\infty}(B(0,1))$ be a \textit{mollifier}. This means that $\eta$ is a nonnegative function such that
$$\int_{\mathbf{R}^n} \eta(x) \, dx=1.$$
Without loss of generality we can assume that $\eta$ is a radial function. Next
we write
$$\eta_{\varepsilon}(x)=\varepsilon^{-n} \eta(\varepsilon^{-1}x), \: \varepsilon>0.$$
For the basic properties of a mollifier see Ziemer \cite[Theorems 1.6.1 and 2.1.3]{Zie}.
We summarize the properties of the convolution (valid for all integers $n \ge 1$) in the following
theorem.

\begin{Theorem}\label{properties of convolutions in Lorentz spaces}
{\rm(See Costea \cite[Theorem V.4]{Cos0} and \cite[Theorem 3.2.1]{Cos3}).}
For $v \in L_{loc}^{1}({\mathbf{R}}^n),$ the convolution
$$v_{\varepsilon}(x)=\eta_{\varepsilon}*v(x)= \int_{{\mathbf{R}}^n} \eta_{\varepsilon}(x-y) v(y) dy$$ enjoys
the following properties for every $\varepsilon>0$:

\par {\rm{(i)}} For every $p \in (1, \infty)$ and every $q \in [1,\infty],$
there exists a constant $C(p,q)>0$ such that
\begin{equation}\label{Marcinkiewicz for convolution}
||v_{\varepsilon}||_{L^{(p,q)}({\mathbf{R}}^n)} \le C(p,q)
||v||_{L^{(p,q)}({\mathbf{R}}^n)}.
\end{equation}

\par {\rm{(ii)}} For every $p \in (1,\infty),$ every $q \in [1,\infty]$
and every $v \in L^{(p,q)}({\mathbf{R}}^n)$ with absolutely continuous
$(p,q)$-norm, we have
\begin{equation}\label{v_eps converges to v in Lpq}
||v_{\varepsilon}-v||_{L^{(p,q)}({\mathbf{R}}^n)} \rightarrow 0
\end{equation}
as $\varepsilon \rightarrow 0.$
\end{Theorem}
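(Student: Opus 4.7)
For part (i), the plan is to use a Young-type inequality for Lorentz spaces. Since $\eta_\varepsilon$ is a nonnegative $C_0^\infty$ mollifier with $\|\eta_\varepsilon\|_{L^1(\mathbf{R}^n)} = 1$, the operator $T_\varepsilon v := v * \eta_\varepsilon$ is a contraction on $L^1(\mathbf{R}^n)$ and on $L^\infty(\mathbf{R}^n)$ by the classical Young inequality. Marcinkiewicz interpolation on the couple $(L^1, L^\infty)$, combined with the fact that $L^{(p,q)}(\mathbf{R}^n)$ is an exact interpolation space for this couple (see Bennett--Sharpley), then gives the bound
\begin{equation*}
\|v * \eta_\varepsilon\|_{L^{(p,q)}(\mathbf{R}^n)} \le C(p,q)\,\|v\|_{L^{(p,q)}(\mathbf{R}^n)}
\end{equation*}
with a constant $C(p,q)$ independent of $\varepsilon$. (Alternatively, one may cite O'Neil's convolution inequality applied with $\eta_\varepsilon \in L^1 \subset L^{(1,1)}$.)

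For part (ii), I would argue by the standard three-step density argument, using that the subspace $X_a$ of functions with absolutely continuous $(p,q)$-norm coincides with $X_b$, the closure of the simple functions. First I would handle the case when $v = s$ is a simple function with support of finite measure: then $s * \eta_\varepsilon$ is bounded by $\|s\|_{L^\infty}$, supported in a fixed compact set $K$ (the $\varepsilon$-neighborhood of $\mathrm{supp}\,s$ for $\varepsilon \le 1$, say), and converges pointwise a.e.\ to $s$. Since $(\|s\|_{L^\infty})\chi_K \in L^{(p,q)}(\mathbf{R}^n)$, applying Proposition \ref{characterization of fns with absolutely continuous norm} (with dominating function $2\|s\|_{L^\infty}\chi_K$) yields $\|s * \eta_\varepsilon - s\|_{L^{(p,q)}(\mathbf{R}^n)} \to 0$ as $\varepsilon \to 0$.

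Next, given $v \in L^{(p,q)}(\mathbf{R}^n)$ with absolutely continuous norm and any $\delta > 0$, I choose a simple function $s$ with $\|v - s\|_{L^{(p,q)}(\mathbf{R}^n)} < \delta$. Then the triangle inequality and part (i) give
\begin{equation*}
\|v_\varepsilon - v\|_{L^{(p,q)}} \le \|(v-s)_\varepsilon\|_{L^{(p,q)}} + \|s_\varepsilon - s\|_{L^{(p,q)}} + \|s - v\|_{L^{(p,q)}}
\le (C(p,q)+1)\delta + \|s_\varepsilon - s\|_{L^{(p,q)}}.
\end{equation*}
Taking $\varepsilon$ sufficiently small forces $\|s_\varepsilon - s\|_{L^{(p,q)}} < \delta$ by the first step, and since $\delta$ was arbitrary, $\|v_\varepsilon - v\|_{L^{(p,q)}(\mathbf{R}^n)} \to 0$.

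The main obstacle is part (ii), specifically verifying the convergence for simple functions; one must produce an $L^{(p,q)}$-dominant for $|s_\varepsilon - s|$ valid uniformly in small $\varepsilon$ so that Proposition \ref{characterization of fns with absolutely continuous norm} applies. Here the boundedness of $s$ and the containment of all supports in a fixed compact set are essential, and this is the step that relies crucially on the hypothesis of absolute continuity of the $(p,q)$-norm (without which $X_b \ne X$ and simple functions are not dense, as witnessed by Proposition \ref{function in Lpinfty but not in Lpq q finite}).
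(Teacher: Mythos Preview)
Your proposal is correct. Note, however, that the paper does not actually give a proof of this theorem: it only records the statement and refers to Costea \cite[Theorem V.4]{Cos0} and \cite[Theorem 3.2.1]{Cos3} for the argument. The equation label \texttt{Marcinkiewicz for convolution} in the paper strongly suggests that the proof in those references proceeds, as yours does for part (i), via the Marcinkiewicz interpolation theorem applied to the endpoint bounds $\|v*\eta_\varepsilon\|_{L^1}\le\|v\|_{L^1}$ and $\|v*\eta_\varepsilon\|_{L^\infty}\le\|v\|_{L^\infty}$. Your density argument for part (ii)---reducing to simple functions via $X_a=X_b$, producing a uniform $L^{(p,q)}$-dominant $2\|s\|_{L^\infty}\chi_K$ (which does have absolutely continuous $(p,q)$-norm, being bounded with support of finite measure), and then invoking Proposition~\ref{characterization of fns with absolutely continuous norm}---is the standard route and is sound.
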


Recall that a function $u: \Omega \rightarrow {\mathbf{R}}$ is \textit{Lipschitz}
on $\Omega \subset {\mathbf{R}}^n,$ if there is $L>0$ such that
$$|u(x)-u(y)| \le L |x-y|$$
for all $x, y \in \Omega.$ Moreover, $u$ is \textit{locally Lipschitz} on $\Omega$ if
$u$ is Lipshitz on each compact subset of $\Omega.$

It is well known that every locally Lipschitz function on
${\mathbf{R}}^n$ is differentiable; this is Rademacher's theorem
(see Federer \cite[Theorem 3.1.6]{Fed}).

\begin{Lemma} \label{normal gradient}
{\rm(See Costea \cite[Lemma V.5]{Cos0} and \cite[Theorem 3.2.2]{Cos3}).}
Suppose $1<p<\infty$ and $1\le q \le \infty.$
Let $u: \Omega \rightarrow \mathbf{R}$ be a locally Lipschitz
function.  Then $u \in H_{loc}^{1,(p,q)}(\Omega)$ and $\nabla
u=(\partial_1 u, \ldots, \partial_n u)$ is the usual
gradient of $u.$
\end{Lemma}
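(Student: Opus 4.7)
The plan is to fix an arbitrary $\Omega' \subset\subset \Omega$ and produce a sequence in $C^{\infty}(\Omega')$ converging to $u$ in the $\|\cdot\|_{1,(p,q); \Omega'}$-norm, with the identification that the gradient arising in the limit agrees with the classical (Rademacher) a.e. gradient of $u$. Once this is accomplished, $u \in H^{1,(p,q)}(\Omega')$ with $\nabla u$ equal to the classical gradient, and since $\Omega'$ is arbitrary, $u \in H_{loc}^{1,(p,q)}(\Omega)$.

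Choose an intermediate open set $\Omega''$ with $\Omega' \subset\subset \Omega'' \subset\subset \Omega$. On $\overline{\Omega''}$ the function $u$ is Lipschitz with some constant $L$, hence bounded. Extend $u\big|_{\overline{\Omega''}}$ via the McShane formula to a Lipschitz function on $\mathbf{R}^n$, and multiply by a cutoff $\psi \in C_0^{\infty}(\Omega)$ with $\psi \equiv 1$ on a neighborhood of $\overline{\Omega''}$, obtaining a compactly supported Lipschitz function $\tilde u : \mathbf{R}^n \to \mathbf{R}$ that agrees with $u$ on $\Omega''$. By Rademacher's theorem $\tilde u$ is classically differentiable a.e.; for Lipschitz functions the classical a.e. gradient coincides with the distributional gradient, so both $\tilde u$ and $|\nabla \tilde u|$ are bounded, measurable, and supported in a fixed compact set $K$.

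Set $\tilde u_{\varepsilon} := \eta_{\varepsilon} * \tilde u \in C_0^{\infty}(\mathbf{R}^n)$. Differentiating under the integral and using the classical/distributional identity, $\partial_i \tilde u_{\varepsilon} = \eta_{\varepsilon} * \partial_i \tilde u$ pointwise on $\mathbf{R}^n$. I claim that $\tilde u$ and each $\partial_i \tilde u$ have absolutely continuous $(p,q)$-norm on $\mathbf{R}^n$. For $1 \le q < \infty$ this is automatic, since the entire space $L^{(p,q)}(\mathbf{R}^n)$ has absolutely continuous norm. For $q = \infty$ it must be checked by hand: if $|f| \le M \chi_K$ and $E_k \to \emptyset$ a.e., then
$$(|f|\chi_{E_k})^{**}(t) \le M \min\!\bigl(1,\, |K \cap E_k|/t\bigr),$$
so $\|f \chi_{E_k}\|_{L^{(p,\infty)}(\mathbf{R}^n)} \le M \, |K \cap E_k|^{1/p} \to 0$ by dominated convergence. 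Theorem \ref{properties of convolutions in Lorentz spaces}(ii) then yields $\tilde u_{\varepsilon} \to \tilde u$ and $\nabla \tilde u_{\varepsilon} \to \nabla \tilde u$ in $L^{(p,q)}(\mathbf{R}^n)$ and $L^{(p,q)}(\mathbf{R}^n; \mathbf{R}^n)$, respectively, as $\varepsilon \to 0$.

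Restricting to $\Omega'$, the functions $\tilde u_{\varepsilon}\big|_{\Omega'} \in C^{\infty}(\Omega')$ converge to $\tilde u|_{\Omega'} = u|_{\Omega'}$ in the Sobolev-Lorentz norm on $\Omega'$, which shows $u \in H^{1,(p,q)}(\Omega')$ with $\nabla u$ equal to the classical a.e. gradient, completing the proof upon letting $\Omega'$ exhaust $\Omega$. The main obstacle is the case $q = \infty$: absolutely continuous $(p,\infty)$-norm is not automatic in $L^{(p,\infty)}$ (as the introduction emphasizes), but the compact support and boundedness of $\tilde u$ and $\nabla \tilde u$ supply it via the short estimate above, which is what allows the same convolution-and-partition-of-unity argument to go through uniformly in $q \in [1,\infty]$.
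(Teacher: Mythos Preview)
The paper does not supply its own proof of this lemma; it simply records the statement and refers to the author's earlier work \cite[Lemma V.5]{Cos0} and \cite[Theorem 3.2.2]{Cos3}. Your argument is correct and complete. The crucial point is handled well: for $q=\infty$ one cannot invoke Theorem \ref{properties of convolutions in Lorentz spaces}(ii) blindly, but your observation that a bounded function supported in a set of finite measure has absolutely continuous $(p,\infty)$-norm (via the estimate $\|f\chi_{E_k}\|_{L^{(p,\infty)}} \le M\,|K\cap E_k|^{1/p}\to 0$) is exactly what makes the mollification argument go through uniformly in $q\in[1,\infty]$.

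One minor simplification: the McShane extension is not really needed. Since $u$ is Lipschitz and bounded on $\overline{\Omega''}$, you can take $\psi\in C_0^\infty(\Omega'')$ with $\psi\equiv 1$ on a neighborhood of $\overline{\Omega'}$ and set $\tilde u=\psi u$ (extended by $0$ outside $\Omega''$); this is already globally Lipschitz and compactly supported on $\mathbf{R}^n$, and agrees with $u$ on $\Omega'$. The rest of your argument then proceeds unchanged.
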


\subsection{Product rule, density results and strict inclusions for Sobolev-Lorentz spaces}

\begin{Proposition} \label{up in Lpinfty loc and in W1pinfty loc minus H1pinfty}

 Let $n \ge 1$ be an integer and let $1<p<\infty.$ Let $u_p: {\mathbf{R}}^n \rightarrow [-\infty, \infty],$
 $$u_p(x)=\left\{ \begin{array}{ll}
 \ln |x| & \mbox{ if $p=n>1$}\\
 |x|^{1-\frac{n}{p}} & \mbox { if $p \neq n.$}
 \end{array}
 \right.
 $$
 Then $u_p$ is in $W^{1,(p,\infty)}_{loc}({\mathbf{R}}^n) \setminus H^{1,(p,\infty)}_{loc}({\mathbf{R}}^n).$

 \end{Proposition}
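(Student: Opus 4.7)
The plan is to prove the two containments separately, and both hinge on the fact that the classical gradient of $u_p$ is (up to a nonzero constant) exactly the function $u_r(x)=|x|^{-n/p}$ studied in Proposition \ref{function in Lpinfty but not in Lpq q finite}. A direct differentiation gives $|\nabla u_p(x)|=c_p|x|^{-n/p}$ on ${\mathbf{R}}^n\setminus\{0\}$, where $c_p=1$ if $p=n$ and $c_p=|1-\tfrac{n}{p}|$ if $p\neq n$; in either case $c_p>0$.

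To establish $u_p\in W^{1,(p,\infty)}_{\mathrm{loc}}({\mathbf{R}}^n)$, I would first check local $L^{(p,\infty)}$-integrability of $u_p$ itself, treating the three cases separately: if $p>n$ then $u_p$ is continuous on ${\mathbf{R}}^n$, hence locally bounded and trivially in $L^{(p,\infty)}_{\mathrm{loc}}$; if $p=n>1$ then $u_p=\ln|x|$ lies in $L^q_{\mathrm{loc}}$ for every finite $q$ and therefore in $L^{(p,\infty)}_{\mathrm{loc}}$; if $p<n$ a direct distribution-function calculation on $B(0,r)$ (completely parallel to the one carried out in Proposition \ref{function in Lpinfty but not in Lpq q finite}(i) for $|x|^{-n/p}$) shows $u_p\in L^{p,\infty}(B(0,r))$. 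For the gradient, the identity $|\nabla u_p|=c_p u_r$ combined with Proposition \ref{function in Lpinfty but not in Lpq q finite}(i) gives $|\nabla u_p|\in L^{p,\infty}_{\mathrm{loc}}({\mathbf{R}}^n)$. The final ingredient is identifying the distributional gradient with the pointwise one: a standard cutoff argument works, i.e.\ take $\eta_\varepsilon\in C_0^\infty({\mathbf{R}}^n)$ with $\eta_\varepsilon\equiv 1$ on $B(0,\varepsilon)$, $\operatorname{supp}\eta_\varepsilon\subset B(0,2\varepsilon)$, $|\nabla\eta_\varepsilon|\lesssim\varepsilon^{-1}$, integrate by parts against $\varphi\in C_0^\infty(\Omega')$ using the smooth function $(1-\eta_\varepsilon)u_p$, and use $u_p\in L^1_{\mathrm{loc}}$ together with $|B(0,2\varepsilon)|\cdot\varepsilon^{-1}\to 0$ on the boundary terms; this is routine in all three cases.

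To show $u_p\notin H^{1,(p,\infty)}_{\mathrm{loc}}({\mathbf{R}}^n)$, I would argue by contradiction. Suppose $u_p\in H^{1,(p,\infty)}(B(0,r))$ for some $r>0$. By definition there exists $\phi_k\in C^\infty(B(0,r))$ with $\phi_k\to u_p$ in $L^{(p,\infty)}(B(0,r))$ and $\nabla\phi_k\to\nabla u_p$ in $L^{(p,\infty)}(B(0,r);{\mathbf{R}}^n)$. The pointwise reverse triangle inequality $||\nabla\phi_k|-|\nabla u_p||\le|\nabla\phi_k-\nabla u_p|$ upgrades this to $|\nabla\phi_k|\to|\nabla u_p|=c_p u_r$ in $L^{(p,\infty)}(B(0,r))$, hence in $L^{p,\infty}(B(0,r))$. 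But each $|\nabla\phi_k|$ is continuous on $B(0,r)$, hence locally bounded. Applying Proposition \ref{function in Lpinfty but not in Lpq q finite}(iv) to $v=c_p^{-1}|\nabla\phi_k|$ and rescaling by $c_p$ yields, for every $\alpha\in(0,r)$ and every $k$,
$$\bigl\||\nabla u_p|-|\nabla\phi_k|\bigr\|_{L^{p,\infty}(B(0,\alpha))}=\bigl\|c_p u_r-|\nabla\phi_k|\bigr\|_{L^{p,\infty}(B(0,\alpha))}\ge c_p\,\Omega_n^{1/p}>0.$$
Restricting the convergence from $B(0,r)$ to the smaller set $B(0,\alpha)$ forces the left-hand side to tend to $0$ as $k\to\infty$, a contradiction.

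I expect the contradiction step to be the conceptual crux: the positive direction $u_p\in W^{1,(p,\infty)}_{\mathrm{loc}}$ is essentially bookkeeping, but the failure to lie in $H^{1,(p,\infty)}_{\mathrm{loc}}$ rests on the non-density phenomenon captured by Proposition \ref{function in Lpinfty but not in Lpq q finite}(iv), namely that locally bounded functions cannot approximate $|x|^{-n/p}$ in the $L^{p,\infty}$ quasinorm on any neighbourhood of the origin. The only technical care needed is to pass from vector-valued convergence of $\nabla\phi_k$ to scalar convergence of $|\nabla\phi_k|$ (handled by the reverse triangle inequality) and to remember that the $L^{(p,\infty)}$- and $L^{p,\infty}$-quasinorms are equivalent, so that convergence in one yields convergence in the other.
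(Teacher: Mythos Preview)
Your argument is correct and follows essentially the same route as the paper: both compute $|\nabla u_p(x)|=c_p|x|^{-n/p}$, verify $u_p\in L^{(p,\infty)}_{\mathrm{loc}}$ and $|\nabla u_p|\in L^{(p,\infty)}_{\mathrm{loc}}$, and then invoke Proposition~\ref{function in Lpinfty but not in Lpq q finite}(iv) to rule out approximation by smooth functions in the $L^{p,\infty}$ quasinorm on any ball about the origin.

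One minor difference: to identify the distributional gradient with the pointwise one, the paper truncates $u_p$ on shrinking balls and appeals to the reflexivity of $H^{1,s}$ for $s\in(1,p)$ (so that $u_p\in H^{1,s}_{\mathrm{loc}}$, whence $u_p\in W^{1,(p,\infty)}_{\mathrm{loc}}$ follows), whereas you use a direct removable-singularity cutoff. Your route is more elementary, but be careful with the bookkeeping you sketched: the quantity $|B(0,2\varepsilon)|\cdot\varepsilon^{-1}$ does \emph{not} tend to $0$ when $n=1$. What actually makes the error term $\int u_p\,\partial_i\eta_\varepsilon\,\varphi$ vanish is the extra smallness of $u_p$ near the origin, namely $\varepsilon^{-1}\int_{B(0,2\varepsilon)}|u_p|\lesssim \varepsilon^{n(1-1/p)}\to 0$ in every case (and similarly $\int_{B(0,2\varepsilon)}|\nabla u_p|\lesssim\varepsilon^{n(1-1/p)}\to 0$ for the other error term). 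With that correction the cutoff argument goes through.
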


 \begin{proof} Since from Theorem \ref{W1pqloc included in H1sloc s<p} we have $W^{1,(p,\infty)}_{loc}({\mathbf{R}}^n) \subset H^{1,s}_{loc}({\mathbf{R}}^n)$ for all $1<s<p,$
 we prove first that $u_p$ is in $H^{1,s}_{loc}({\mathbf{R}}^n)$ for all $1<s<p.$

We start by noticing that $u_p \in L^{p}_{loc}({\mathbf{R}}^n)$ for $p=n>1$ and that $u_{p}(x) \le r |x|^{-\frac{n}{p}}$ for all $x \in B(0,r)$ and for all $p \neq n.$ Thus, $u_{p}$ is in $L^{(p,\infty)}_{loc}({\mathbf{R}}^n)$ for all $p \in (1, \infty).$ Moreover, an easy computation
shows that
$$\lim_{r \rightarrow \infty} ||u_{p}||_{L^{p,\infty}(B(0,r))}=\infty$$ for all $p \in (1, \infty).$ Thus, $u_{p}$ is not in $L^{p,\infty}({\mathbf{R}}^n).$

We notice that $u_{p}$ is smooth in ${\mathbf{R}}^n \setminus \{0\}$ with
$$\nabla u_{p}(x)=C(n,p) \, x \, |x|^{-1-\frac{n}{p}}, x \neq 0,$$
where
\begin{equation} \label{Cnp is 1-n/p or 1}
C(n,p)=\left\{ \begin{array}{ll}
1 & \mbox{ if $p=n>1$}\\
1-\frac{n}{p} & \mbox{ if $p \neq n.$}
\end{array}
\right.
\end{equation}

Thus, $|\nabla u_{p}(x)|=|C(n,p)| \, |x|^{-\frac{n}{p}}, x \neq 0.$
By doing a computation similar to the one
in Proposition \ref{function in Lpinfty but not in Lpq q finite}, we have
$$|\nabla u_p|^{*}(t)=|C(n,p)| \left(\frac{\Omega_n}{t}\right)^{1/p}$$
for all $t \ge 0,$ where $C(n,p)$ is the constant from (\ref{Cnp is 1-n/p or 1}).

Thus, it follows immediately that
$|\nabla u_{p}|$ is in $L^{(p,\infty)}({\mathbf{R}}^n)$ and
$$
||\nabla u_{p}||_{L^{p,\infty}({\mathbf{R}}^n; {\mathbf{R}}^n)}=||\nabla u_{p}||_{L^{p,\infty}(B(0,r); {\mathbf{R}}^n)}=|C(n,p)| \, \Omega_n^{1/p}<||\nabla u_{p}||_{L^{p,q}(B(0,r); {\mathbf{R}}^n)}=\infty
$$
for every $r>0$ and every $1 \le q<\infty,$ where $C(n,p)$ is the above constant.

By invoking Proposition \ref{function in Lpinfty but not in Lpq q finite} (iv), we see that
$$||\nabla u_{p}-\nabla v||_{L^{p,\infty}(B(0,\alpha); {\mathbf{R}}^n)} \ge ||\nabla u_{p}||_{L^{p,\infty}(B(0,r); {\mathbf{R}}^n)}
=||\nabla u_{p}||_{L^{p,\infty}({\mathbf{R}}^n; {\mathbf{R}}^n)}=|C(n,p)| \, \Omega_n^{1/p}>0
$$
for every $v \in C^{\infty}({\mathbf{R}}^n)$ and every $0<\alpha<r<\infty,$ where $C(n,p)$ is the
constant from (\ref{Cnp is 1-n/p or 1}). This implies immediately that $u_{p}$ is not in $H_{loc}^{1,(p,\infty)}({\mathbf{R}^n})$
because $u_{p}$ cannot be approximated with smooth functions in the $H^{1,(p,\infty)}$ norm on open balls centered at the origin.

It is enough to prove that $u_{p}$ is in $W^{1,(p,\infty)}(B(0,r))$ and in $H^{1,s}(B(0,r))$ for all $r>0$ and for all $s \in (1,p).$ We can assume without loss of generality that $r>1.$ We fix such $s$ and $r.$

For every integer $k \ge 1$ we truncate the function $u_{p}$ on the set $B(0, \frac{1}{k+1})$ and we denote this truncation by $u_{p,k}.$ Specifically, for $p=n>1$ and $k \ge 1$ integer we define $u_{n,k}$ on ${\mathbf{R}^n}$ by
$$u_{n,k}(x)=\left\{\begin{array}{ll}
\ln \frac{1}{k+1} & \mbox{ if $0 \le |x| \le \frac{1}{k+1}$}\\
u_{n}(x)=\ln |x| & \mbox{ if $\frac{1}{k+1} \le |x|< \infty.$}
\end{array}
\right.
$$
For $p \neq n$ and $k \ge 1$ integer we define $u_{p,k}$ on ${\mathbf{R}}^n$ by
$$u_{p,k}(x)=\left\{\begin{array}{ll}
\left(\frac{1}{k+1}\right)^{1-\frac{n}{p}} & \mbox{ if $0 \le |x| \le \frac{1}{k+1}$}\\
u_{p}(x)=|x|^{1-\frac{n}{p}} & \mbox{ if $\frac{1}{k+1} \le |x|<\infty.$}
\end{array}
\right.
$$

We notice that $(u_{p,k})_{k \ge 1} \subset H^{1,(p, \infty)}(B(0,r))$ is a sequence of Lipschitz functions on $\overline{B}(0,r).$

Moreover, for every $k \ge 1$ we have $0 \le |u_{p,k}| \le |u_{p}|$ pointwise in ${\mathbf{R}}^n$ and $|\nabla u_{p,k}| \le |\nabla u_{p}|$ almost everywhere in ${\mathbf{R}}^n.$ Thus, the sequence $u_{p,k}$ is bounded in $H^{1,(p, \infty)}(B(0,r))$ and in $H^{1,s}(B(0,r))$ for all $1<s<p.$

This sequence converges to $u_{p}$ pointwise in ${\mathbf{R}}^n \setminus \{0\}.$
The aforementioned pointwise convergence on $B^{*}(0,r)$ together with the reflexivity argument from Heinonen-Kilpel\"{a}inen-Martio \cite[Theorem 1.32]{HKM}, valid for all integers $n \ge 1,$ shows that
$u_{p}$ is in $H^{1,s}(B(0,r)).$ Thus, we showed that $u_{p} \in H^{1,s}(B(0,r))$ for all $1<s<p$ and all $r>0.$

We proved that $u_{p}$ is in $H^{1,s}_{loc}({\mathbf{R}}^n)$ for all $s \in (1,p).$ The fact that $u_{p}$ is in $H^{1,s}_{loc}({\mathbf{R}}^n)$ for all $1<s<p$ coupled with the fact that $u_{p}$ is in
$L^{(p,\infty)}_{loc}({\mathbf{R}}^n)$ and $|\nabla u_{p}|$ is in $L^{(p,\infty)}({\mathbf{R}}^n)$ show that $u_{p}$ is indeed in $W^{1,(p,\infty)}_{loc}({\mathbf{R}}^n).$ This finishes the proof.

 \end{proof}

The following theorem shows, among other things, that $H^{1,(p,\infty)}(\Omega) \subsetneq W^{1,(p,\infty)}(\Omega);$ it also shows that the spaces $H_{0}^{1,(p,\infty)}(\Omega),$
$H^{1,(p, \infty)}(\Omega),$ and $W^{1,(p, \infty)}(\Omega)$ are not reflexive.

\begin{Theorem} \label{H1pinfty subsetneq W1pinfty}
Let $\Omega \subset {\mathbf{R}}^n$ be an open set, where $n \ge 1$ is an integer and let $y$ be a point in $\Omega.$ Suppose $1<p<\infty.$

\par {\rm(i)} We have $H^{1,(p,\infty)}(\Omega) \subsetneq W^{1,(p,\infty)}(\Omega) \cap H^{1,(p,\infty)}(\Omega \setminus \{y \}).$

\par {\rm(ii)} We have $H^{1,(p,q)}(\Omega \setminus \{y \}) \subsetneq H^{1,(p,\infty)}(\Omega \setminus \{y \})$ whenever $1 \le q<\infty.$

\par {\rm(iii)} The spaces $H_{0}^{1,(p,\infty)}(\Omega),$ $H^{1,(p,\infty)}(\Omega),$ and $W^{1,(p,\infty)}(\Omega)$ are not reflexive.
\end{Theorem}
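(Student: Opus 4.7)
The plan is to build every witness in the theorem around a single function obtained by localizing the radial counterexample $u_p$ of Proposition \ref{up in Lpinfty loc and in W1pinfty loc minus H1pinfty} near $y$. Shrink $r>0$ so that $\overline{B}(y,r)\subset\Omega$, choose $\varphi\in C_0^\infty(B(y,r))$ with $\varphi\equiv 1$ on $B(y,r/2)$, and set
\[
v(x)=\varphi(x)\,u_p(x-y),\qquad x\in\Omega.
\]
Because $u_p\in W^{1,(p,\infty)}_{\mathrm{loc}}(\mathbf{R}^n)$ by Proposition \ref{up in Lpinfty loc and in W1pinfty loc minus H1pinfty}, the distributional product rule combined with the compact support of $\varphi$ gives $v\in W^{1,(p,\infty)}(\Omega)$ at once. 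On $\Omega\setminus\{y\}$ the function $v$ is smooth with the same finite Sobolev--Lorentz norm, so it already lies in the pre-completion smooth class defining $H^{1,(p,\infty)}(\Omega\setminus\{y\})$; hence $v\in H^{1,(p,\infty)}(\Omega\setminus\{y\})$.

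To finish (i) I must rule out $v\in H^{1,(p,\infty)}(\Omega)$. If some $v_k\in C^\infty(\Omega)$ approached $v$ in $H^{1,(p,\infty)}(\Omega)$-norm, then on $B(y,r/2)$ (where $\varphi\equiv 1$) we would have $\nabla v=\nabla u_p(\cdot-y)$ while $\nabla v_k$ is continuous, hence locally bounded. Applying Proposition \ref{function in Lpinfty but not in Lpq q finite}(iv) to the singular scalar field $|\nabla u_p(\cdot-y)|=|C(n,p)|\,|\cdot-y|^{-n/p}$ against the locally bounded $|\nabla v_k|$---exactly as inside the proof of Proposition \ref{up in Lpinfty loc and in W1pinfty loc minus H1pinfty}---yields, for every $\varepsilon\in(0,1)$ and every sufficiently small $\alpha>0$,
\[
\|\nabla v-\nabla v_k\|_{L^{(p,\infty)}(B(y,\alpha);\mathbf{R}^n)}\ge(1-\varepsilon)|C(n,p)|\,\Omega_n^{1/p}>0,
\]
contradicting norm convergence. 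The same $v$ settles (ii): I already have $v\in H^{1,(p,\infty)}(\Omega\setminus\{y\})$, and Proposition \ref{up in Lpinfty loc and in W1pinfty loc minus H1pinfty} shows $\|\nabla u_p\|_{L^{p,q}(B(0,\rho);\mathbf{R}^n)}=\infty$ for every $\rho>0$ and every finite $q$. Since $\nabla v$ coincides with $\nabla u_p(\cdot-y)$ on $B(y,r/2)$, this forces $v\notin W^{1,(p,q)}(\Omega\setminus\{y\})$, and Theorem \ref{H included in W} rules out $v\in H^{1,(p,q)}(\Omega\setminus\{y\})$.

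For (iii) it suffices to prove that $H_0^{1,(p,\infty)}(\Omega)$ is not reflexive: the closed inclusions $H_0^{1,(p,\infty)}(\Omega)\subset H^{1,(p,\infty)}(\Omega)\subset W^{1,(p,\infty)}(\Omega)$ (the first by definition, the second because $H^{1,(p,\infty)}(\Omega)$ is complete in the common Sobolev--Lorentz norm) then propagate non-reflexivity upward via the standard fact that closed subspaces of reflexive Banach spaces are reflexive. To see $H_0^{1,(p,\infty)}(\Omega)$ is not reflexive, let $u_{p,k}$ be the Lipschitz truncations built inside the proof of Proposition \ref{up in Lpinfty loc and in W1pinfty loc minus H1pinfty} and set $v_k(x):=\varphi(x)\,u_{p,k}(x-y)$; each $v_k$ is Lipschitz with compact support in $\Omega$ and therefore lies in $H_0^{1,(p,\infty)}(\Omega)$, the pointwise bounds on $u_{p,k}$ and $\nabla u_{p,k}$ from that proof (transferred to $v_k$ through the smooth cutoff) give uniform boundedness of $(v_k)$ in $H_0^{1,(p,\infty)}(\Omega)$, and $v_k\to v$ pointwise a.e. Reflexivity would extract a weakly convergent subsequence $v_{k_j}\rightharpoonup w\in H_0^{1,(p,\infty)}(\Omega)$; by Mazur's lemma, convex combinations of the $v_{k_j}$ would converge in norm (and, along a further subsequence, pointwise a.e.) to $w$, while the same combinations still converge pointwise a.e.\ to $v$, forcing $w=v\in H_0^{1,(p,\infty)}(\Omega)$ and contradicting (i). The main obstacle here is the Mazur-lemma bridge inside the non-reflexive ambient $L^{(p,\infty)}$: I must check carefully that a single diagonal subsequence delivers both the norm limit $w$ and the pointwise a.e.\ limit $v$ from the convex combinations.
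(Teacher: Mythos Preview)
Your proposal is correct and follows essentially the same strategy as the paper. Both proofs localize the singular function $u_p$ of Proposition~\ref{up in Lpinfty loc and in W1pinfty loc minus H1pinfty} near $y$, observe that the localized function is smooth on $\Omega\setminus\{y\}$ (hence already in the defining class for $H^{1,(p,\infty)}(\Omega\setminus\{y\})$), and invoke Proposition~\ref{function in Lpinfty but not in Lpq q finite}(iv) to block approximation by smooth functions across the singularity; for (iii), both use the Lipschitz truncations $u_{p,k}$ to produce a bounded, pointwise-convergent sequence in $H_0^{1,(p,\infty)}$ whose limit falls outside the space.

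The only differences are cosmetic. The paper localizes by subtracting a constant so that $u_{r,p}:=u_p-c(n,p,r)$ vanishes on $\partial B(0,r)$ and then declares ``without loss of generality $\Omega=B(0,r)$''; you instead multiply by a cutoff $\varphi\in C_0^\infty(B(y,r))$, which handles arbitrary $\Omega$ directly and is arguably cleaner. For (iii) the paper states the non-reflexivity conclusion tersely, while you spell out the Mazur-lemma bridge; your worry about the diagonal subsequence is unfounded, since the Mazur convex combinations can be taken from tails $\{v_{k_j}:j\ge m\}$, which already converge pointwise a.e.\ to $v$, and norm convergence in $L^{(p,\infty)}$ implies convergence in measure, so a further subsequence converges pointwise a.e.\ to $w$ as well.
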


\begin{proof} The inclusion in (i) follows immediately from Theorem \ref{H included in W} and
from the definition of the Sobolev-Lorentz spaces $H^{1,(p, q)};$ the inclusion in (ii) follows immediately from Theorem \ref{W1pqloc included in H1sloc s<p} (iii) and from the
definition of the Sobolev-Lorentz spaces $H^{1,(p, q)}.$

In order to prove the strict inclusions in (i) and (ii) and the non-reflexivity in (iii), we can assume without loss of generality that $\Omega$ is a bounded open set in ${\mathbf{R}^n}$ such that $y=0 \in \Omega.$ Furthermore, we can assume without loss of generality that $\Omega=B(0,r)$ with $r>1.$

We prove (i) and (ii). We define $u_{r,p}:B(0,r) \rightarrow [-\infty, \infty]$ by
\begin{equation*}
u_{r,p}(x)=\left\{ \begin{array}{ll}
\ln \frac{|x|}{r} &  0 \le |x|<r, \mbox{ if $p=n>1$}\\
|x|^{1-\frac{n}{p}}-r^{1-\frac{n}{p}} & 0 \le |x|<r, \mbox{ if $p\neq n.$}
\end{array}
\right.
\end{equation*}

Let $c(n,p,r)$ be a constant that depends on $n,p,r,$ defined by
\begin{equation} \label{defn of cnpr}
c(n,p,r)=\left\{ \begin{array}{ll}
\ln r & \mbox{ if $p=n>1$}\\
r^{1-\frac{n}{p}} & \mbox{ if $p\neq n.$}
\end{array}
\right.
\end{equation}

We notice that $u_{r,p}=u_p-c(n,p,r)$ on $B(0,r),$ where $u_p$ is the function from Proposition \ref{up in Lpinfty loc and in W1pinfty loc minus H1pinfty}, defined on ${\mathbf{R}}^n.$ Thus, from Proposition \ref{up in Lpinfty loc and in W1pinfty loc minus H1pinfty} it follows immediately that $u_{r,p}$ is in $W^{1,(p,\infty)}(B(0,r)) \setminus H^{1,(p,\infty)}(B(0,r))$ and that $u_{r,p}$ is not in $H^{1,(p,q)}(B^{*}(0,r))$ whenever $1 \le q<\infty.$

Moreover, by mimicking the argument from the proof of Proposition \ref{up in Lpinfty loc and in W1pinfty loc minus H1pinfty}, we have
$$||\nabla u_{r,p}-\nabla v||_{L^{p,\infty}(B(0,\alpha); {\mathbf{R}}^n)} \ge ||\nabla u_{r,p}||_{L^{p,\infty}(B(0,r); {\mathbf{R}}^n)}=|C(n,p)| \, \Omega_n^{1/p}$$
for every $v \in C^{\infty}(B(0,r))$ and every $\alpha \in (0,r),$ where $C(n,p)$ is the
constant from (\ref{Cnp is 1-n/p or 1}).

We notice that $u_{r,p}$ is smooth in $B^{*}(0,r).$ Since we saw that $u_{r,p}$ is in $W^{1,(p,\infty)}(B(0,r)),$ it follows immediately that $u_{r,p} \in H^{1,(p,\infty)}(B^{*}(0,r)).$
This finishes the proof of claims (i) and (ii).

\vskip 2mm

We prove now claim (iii). We modify slightly the reflexivity argument from Proposition \ref{up in Lpinfty loc and in W1pinfty loc minus H1pinfty}. For every integer $k \ge 1$ we define the functions $u_{r,p,k}$ on $B(0,r)$ by
$u_{r,p,k}(x)=u_{p,k}(x)-c(n,p,r), x \in B(0,r);$ here $c(n,p,r)$ is the constant from (\ref{defn of cnpr}) and $u_{p,k}$ are the functions from Proposition \ref{up in Lpinfty loc and in W1pinfty loc minus H1pinfty} (namely the truncations of $u_p$ on $B(0, \frac{1}{k+1})$).
Specifically, for $p=n>1$ and $k \ge 1$ integer we have
$$u_{r,n,k}(x)=\left\{\begin{array}{ll}
\ln \frac{1}{k+1}-\ln r & \mbox{ if $0 \le |x| \le \frac{1}{k+1}$}\\
u_{r,n}(x)=\ln \frac{|x|}{r} & \mbox{ if $\frac{1}{k+1} \le |x| < r.$}
\end{array}
\right.
$$
For $p \neq n$ and $k \ge 1$ integer we have
$$u_{r,p,k}(x)=\left\{\begin{array}{ll}
\left(\frac{1}{k+1}\right)^{1-\frac{n}{p}}-r^{1-\frac{n}{p}} & \mbox{ if $0 \le |x| \le \frac{1}{k+1}$}\\
u_{r,p}(x)=|x|^{1-\frac{n}{p}}-r^{1-\frac{n}{p}} & \mbox{ if $\frac{1}{k+1} \le |x| < r.$}
\end{array}
\right.
$$

We see that $(u_{r,p,k})_{k \ge 1} \subset H_{0}^{1,(p, \infty)}(B(0,r))$ is a sequence of Lipschitz functions on $B(0,r)$ that can be extended continuously by $0$ on $\partial B(0,r).$ Moreover, for every $k \ge 1$ we have $0 \le |u_{r,p,k}| \le |u_{r,p}|$ pointwise in $B(0,r)$ and $|\nabla u_{r,p,k}| \le |\nabla u_{r,p}|$ almost everywhere in $B(0,r).$

By using the argument from Proposition \ref{up in Lpinfty loc and in W1pinfty loc minus H1pinfty} (ii) with minor modifications, we see that the sequence $u_{r,p,k}$ is bounded in $H_{0}^{1,(p, \infty)}(B(0,r)),$ in $H_{0}^{1,s}(B(0,r))$ and also in $H^{1,s}(B(0,r))$ for all $1<s<p.$ Since this sequence converges to $u_{r,p}$ pointwise in $B^{*}(0,r)$ but $u_{r,p}$ is not in $H^{1,(p,\infty)}(B(0,r)),$ it follows that $H_{0}^{1,(p,\infty)}(B(0,r))$ and $H^{1,(p,\infty)}(B(0,r))$ are not reflexive spaces. Moreover, since both these spaces are closed subspaces of $W^{1,(p,\infty)}(B(0,r)),$ it follows that the space $W^{1,(p,\infty)}(B(0,r))$ is not reflexive. Thus, we proved claim (ii). This finishes the proof of the theorem.

\end{proof}

The following lemma shows, among other things, that the product
between a function $u$ in $W^{1,(p,q)}(\Omega)$ and a function
$\varphi$ in $C_{0}^{\infty}(\Omega)$ yields a function in
$H_{0}^{1,(p,q)}(\Omega)$ if $1<p<\infty$ and $1\le q \le \infty$
whenever $u$ and $\nabla u$ have absolutely continuous
$(p,q)$-norm.

\begin{Lemma} \label{Product Rule for W1pq}
{\rm(See Costea \cite[Lemma V.6]{Cos0} and \cite[Lemma 3.3.1]{Cos3}).}
Let $\Omega \subset {\mathbf{R}}^n$ be an open set, where $n \ge 1$ is an integer.
Suppose $1<p<\infty$ and $1\le q\le \infty.$ Suppose that $u \in W^{1,(p,q)}(\Omega)$ and that $\varphi \in C_{0}^{\infty}(\Omega).$ Then $u \varphi \in W^{1,(p,q)}(\Omega)$
and $\nabla(u \varphi)=u \nabla \varphi+ \varphi \nabla u.$
Moreover, $u \varphi \in H_{0}^{1,(p,q)}(\Omega)$ whenever $u$ and
$\nabla u$ have absolutely continuous $(p,q)$-norm.
\end{Lemma}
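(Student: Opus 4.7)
First I would show the distributional identity $\nabla(u\varphi) = u\nabla\varphi + \varphi \nabla u$ and that the right-hand side lies in $L^{(p,q)}(\Omega;\mathbf{R}^n)$. Because $\varphi \in C_{0}^{\infty}(\Omega)$ and $\nabla\varphi \in C_{0}^{\infty}(\Omega;\mathbf{R}^n)$ are bounded with compact support, the pointwise bounds $|u\varphi| \le \|\varphi\|_{L^{\infty}}|u|$, $|u\nabla\varphi| \le \|\nabla\varphi\|_{L^{\infty}}|u|$ and $|\varphi\nabla u| \le \|\varphi\|_{L^{\infty}}|\nabla u|$, combined with the fact that the Lorentz (quasi)norm of a measurable function is controlled by that of any measurable pointwise majorant, place $u\varphi$ in $L^{(p,q)}(\Omega)$ and both $u\nabla\varphi$ and $\varphi\nabla u$ in $L^{(p,q)}(\Omega;\mathbf{R}^n)$. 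The distributional identity itself is standard: for any $\psi \in C_{0}^{\infty}(\Omega)$, the test function $\varphi\psi$ is again in $C_{0}^{\infty}(\Omega)$, so applying the definition of the distributional derivative of $u$ to it and using the classical product rule for smooth functions gives
\begin{equation*}
\int_{\Omega} u\varphi\,\partial_i\psi\,dx = \int_{\Omega} u\,\partial_i(\varphi\psi)\,dx - \int_{\Omega} u\psi\,\partial_i\varphi\,dx = -\int_{\Omega} \psi\,(\varphi\,\partial_i u + u\,\partial_i\varphi)\,dx,
\end{equation*}
which identifies $\partial_i(u\varphi) = \varphi\,\partial_i u + u\,\partial_i\varphi$ and puts $u\varphi$ in $W^{1,(p,q)}(\Omega)$.

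Second, for the $H_{0}^{1,(p,q)}(\Omega)$ conclusion under the absolute continuity hypothesis, I would mollify. Extending $u\varphi$ by zero to $\mathbf{R}^n$ produces a compactly supported function in $\Omega$, and the same holds for the two summands of $\nabla(u\varphi)$. Because $|u\varphi|$ is pointwise dominated by $\|\varphi\|_{L^{\infty}}|u|$ and each component of $\nabla(u\varphi)$ is pointwise dominated by a constant multiple of $|u| + |\nabla u|$, Proposition \ref{characterization of fns with absolutely continuous norm} transfers the absolute continuity of the $(p,q)$-norm from $u$ and $\nabla u$ to $u\varphi$ and $\nabla(u\varphi)$. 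Setting $v_{\varepsilon} = \eta_{\varepsilon}*(u\varphi)$, its support lies within an $\varepsilon$-neighborhood of $\mbox{supp}\,\varphi$, so $v_{\varepsilon} \in C_{0}^{\infty}(\Omega)$ for all sufficiently small $\varepsilon$. Since distributional differentiation commutes with convolution, $\nabla v_{\varepsilon} = \eta_{\varepsilon}*\nabla(u\varphi)$, and then (\ref{v_eps converges to v in Lpq}) of Theorem \ref{properties of convolutions in Lorentz spaces} yields $v_{\varepsilon} \to u\varphi$ and $\nabla v_{\varepsilon} \to \nabla(u\varphi)$ in $L^{(p,q)}$, exhibiting $u\varphi$ as a limit in the $H^{1,(p,q)}$-norm of a sequence in $C_{0}^{\infty}(\Omega)$.

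The only delicate point is the case $q = \infty$. For $1 \le q < \infty$ every element of $L^{(p,q)}$ has absolutely continuous norm, so the second hypothesis is automatic and Theorem \ref{properties of convolutions in Lorentz spaces}(ii) applies without extra care. For $q = \infty$, Proposition \ref{function in Lpinfty but not in Lpq q finite} shows that mollification can fail to converge in $L^{(p,\infty)}$-norm, which is exactly why the absolute continuity hypothesis on $u$ and $\nabla u$ is imposed. The main obstacle, then, is to check that this hypothesis passes from $u$ and $\nabla u$ to the product $u\varphi$ and to $\nabla(u\varphi) = u\nabla\varphi + \varphi\nabla u$; this is precisely what Proposition \ref{characterization of fns with absolutely continuous norm} handles via pointwise domination, so once that observation is in place the convolution argument runs uniformly across $1 \le q \le \infty$.
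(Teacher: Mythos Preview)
Your argument is correct and follows essentially the same strategy as the paper: establish the product rule and $W^{1,(p,q)}$-membership via pointwise bounds, then use mollification together with Theorem \ref{properties of convolutions in Lorentz spaces}(ii) and the absolute continuity hypothesis to produce an approximating sequence in $C_0^\infty(\Omega)$. Two minor differences are worth noting: the paper obtains the distributional identity by embedding into $H^{1,s}(\Omega)$ for some $s\in(1,p)$ and citing the classical result, whereas you compute it directly; and the paper first multiplies $u$ by an auxiliary cutoff to reduce to compactly supported $u$, mollifies $u$ itself, and then multiplies by $\varphi$, whereas you mollify $u\varphi$ directly---your route is slightly more economical since $u\varphi$ is already compactly supported.
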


\begin{proof} We can assume without loss of generality that $\Omega$ is bounded. Let $s \in (1,p).$
Then from Theorem \ref{W1pqloc included in H1sloc s<p} we have $u \in H^{1,s}(\Omega),$ and hence from Evans \cite[p.\ 247 Theorem 1]{Eva} it follows that $u \varphi \in H^{1,s}(\Omega)$ and $\nabla (u \varphi)=u \nabla \varphi+ \varphi \nabla u.$ Since $u \varphi \in L^{(p,q)}(\Omega)$ and $u \nabla \varphi+ \varphi \nabla u \in L^{(p,q)}(\Omega; {\mathbf{R}}^n),$ it follows that $u \varphi \in W^{1,(p,q)}(\Omega).$

Now suppose that $u$ and $\nabla u$ have absolutely continuous $(p,q)$-norm. (This is always the case when $1 \le q < \infty$). We have to prove that $u \varphi \in H_{0}^{1,(p,q)}(\Omega).$

If we multiply $u$ with a function $\widetilde{\eta} \in C_{0}^{\infty}(\Omega),$ the first part of the proof shows that both $u \widetilde{\eta}$ and $\nabla(u \widetilde{\eta})$ have absolutely continuous norm whenever $u$ and $\nabla u$ have absolutely continuous $(p,q)$-norm. If the function $\widetilde{\eta}$ is chosen to be $1$ on $\mbox{supp } \varphi,$
then $\widetilde{\eta} \varphi=\varphi.$ This allows us to assume without generality that $u$ has compact support in $\Omega.$

Let $\eta \in C_{0}^{\infty}(B(0,1))$ be a mollifier. Let $j_0>0$ be an integer such that
$$j_0>(\mbox{dist}(\mbox{supp } u, {\mathbf{R}}^n \setminus \Omega))^{-1}.$$
For $j \ge j_0$ integer we define $u_j: \Omega \rightarrow {\mathbf{R}},$ $u_j(x)=(\eta_j*u)(x),$ where $\eta_j(x)=j^n \eta(jx).$
We notice that $(u_j)_{j \ge j_0} \subset C_{0}^{\infty}(\Omega).$ Moreover, since $\eta_j \in C_{0}^{\infty}(B(0, j^{-1}))$ are mollifiers and $u \in W^{1,(p,q)}(\Omega),$ it follows via Ziemer \cite[Theorem 1.6.1]{Zie} that $\partial_i u_j=(\partial_i \eta_j)*u=\eta_j*(\partial_i u)$ for all
$i=1, \ldots, n$ and for all integers $j \ge j_0.$ Since $u$ and $\nabla u$ have absolutely continuous $(p,q)$-norm, it follows via Theorem \ref{properties of convolutions in Lorentz spaces} that $u_j$ converges to $u$ in $H^{1,(p,q)}(\Omega)$ as $j \rightarrow \infty.$

This implies, via the first part of the proof that $u_j \varphi, j \ge j_0$ is a sequence in $C_{0}^{\infty}(\Omega)$ that converges to
$u \varphi$ in $H^{1,(p,q)}(\Omega),$ which means that $u \varphi \in H_{0}^{1,(p,q)}(\Omega).$ This finishes the proof.

\end{proof}

\begin{Remark} \label{counterexample product rule q=infty}
We notice that the product $u \varphi$ does not necessarily belong to
$H_{0}^{1,(p,\infty)}(\Omega)$ when $u \in W^{1,(p,\infty)}(\Omega)$
and $\varphi \in C_{0}^{\infty}(\Omega)$ if $\nabla u$ does not have
absolutely continuous $(p,\infty)$-norm.

Indeed, let $0<\alpha<r<\infty$ and let $\Omega=B(0,r).$
Let $u_{r,p}$ be the function from Theorem \ref{H1pinfty subsetneq W1pinfty}.
Choose $\varphi_{r, \alpha}$ in $C_0^{\infty}(\Omega)$ such that $\varphi_{r, \alpha}=1$ in $B(0,\alpha).$ Then via Lemma \ref{Product Rule for W1pq} we have $u_{r, p} \varphi_{r, \alpha} \in W^{1,(p, \infty)}(\Omega).$ It is obvious that $u_{r, p} \varphi_{r, \alpha}=u_{r, p}$ in $B(0,\alpha)$ and hence via Theorem \ref{H1pinfty subsetneq W1pinfty} it follows that $u_{r, p} \varphi_{r, \alpha}$ does not belong to $H_{0}^{1,(p, \infty)}(\Omega).$

\end{Remark}

Now we prove that if $n \ge 1$ is an integer, $\Omega \subset {\mathbf{R}}^n$ is an open set and $u \in W^{1,(p,q)}(\Omega)$ is such that $u$ and $\nabla u$ have absolutely continuous $(p,q)$-norm, then $u \in H^{1,(p,q)}(\Omega).$ This result is new for $q=\infty$ and $n \ge 1.$ For $1 \le q<\infty$ it yields $H^{1,(p,q)}(\Omega)=W^{1, (p,q)}(\Omega),$ a result proved in Costea \cite[Theorem 3.3.4]{Cos3} and Costea \cite[Theorem V.9]{Cos0} for $n \ge 2.$ Thus, we generalize and improve the result obtained in Costea \cite[Theorem 3.3.4]{Cos3} and Costea \cite[Theorem V.9]{Cos0}.

\begin{Theorem} \label{H=W revisited}
Let $\Omega \subset {\mathbf{R}}^n$ be an open set, where $n \ge 1$ is an integer. Suppose $1<p<\infty$ and $1 \le q \le \infty.$ Suppose that $u \in W^{1,(p,q)}(\Omega).$ If $u$ and $\nabla u$ have absolutely
continuous $(p,q)$-norm, then $u \in H^{1,(p,q)}(\Omega).$ In particular,
$H^{1,(p,q)}(\Omega)=W^{1,(p,q)}(\Omega)$ if $1 \le q<\infty.$

\end{Theorem}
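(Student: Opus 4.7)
My plan is a standard partition of unity and convolution argument, using Theorem~\ref{properties of convolutions in Lorentz spaces} and Lemma~\ref{Product Rule for W1pq} in the form that the absolute continuity assumption makes them applicable for $q=\infty$ as well; when $1\le q<\infty$ absolute continuity is automatic, and the theorem reduces to the result already proved in \cite{Cos0} and \cite{Cos3} for $n\ge 2$, while the same argument carries over unchanged to $n=1$.

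First I would fix a locally finite open cover $\{U_i\}_{i\ge 1}$ of $\Omega$ with $\overline{U_i}\subset\subset\Omega$ and a subordinate smooth partition of unity $\{\varphi_i\}_{i\ge 1}\subset C_0^{\infty}(\Omega)$ satisfying $\sum_{i}\varphi_i=1$ on $\Omega$. Each product $\varphi_i u$ lies in $W^{1,(p,q)}(\Omega)$ by Lemma~\ref{Product Rule for W1pq}, with $\nabla(\varphi_i u)=u\nabla\varphi_i+\varphi_i\nabla u$. Because $u$ and $\nabla u$ have absolutely continuous $(p,q)$-norm and multiplication by $\varphi_i\in C_0^{\infty}(\Omega)$ preserves this property (pointwise domination by $\|\varphi_i\|_\infty|u|$ and by $\|\varphi_i\|_\infty|\nabla u|+\|\nabla\varphi_i\|_\infty|u|$ suffices via Definition~\ref{defn of fns with absolutely continuous norm}), the functions $\varphi_i u$ and $\nabla(\varphi_i u)$ also have absolutely continuous $(p,q)$-norm.

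Next, fix $\varepsilon>0$. For each $i$ let $K_i=\mbox{supp }\varphi_i\subset\subset\Omega$, and pick $\delta_i>0$ so small that the $\delta_i$-neighborhood $K_i^{\delta_i}$ of $K_i$ still lies compactly in $\Omega$ and so that the collection $\{K_i^{\delta_i}\}_{i\ge 1}$ remains locally finite in $\Omega$. Extending $\varphi_i u$ by zero to ${\mathbf{R}^n}$, set $w_{i,\varepsilon}=\eta_{\delta_i}*(\varphi_i u)\in C_0^{\infty}(\Omega)$, with $\mbox{supp }w_{i,\varepsilon}\subset K_i^{\delta_i}$. By standard properties of mollifiers, $\partial_j w_{i,\varepsilon}=\eta_{\delta_i}*\partial_j(\varphi_i u)$, and Theorem~\ref{properties of convolutions in Lorentz spaces}(ii) — which is exactly the place where absolute continuity of the $(p,q)$-norm is indispensable when $q=\infty$ — lets us shrink $\delta_i$ further so that
\begin{equation*}
\|\varphi_i u-w_{i,\varepsilon}\|_{L^{(p,q)}(\Omega)}+\|\nabla(\varphi_i u)-\nabla w_{i,\varepsilon}\|_{L^{(p,q)}(\Omega;{\mathbf{R}^n})}<\varepsilon\,2^{-i}.
\end{equation*}

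Finally, set $v_{\varepsilon}=\sum_{i\ge 1}w_{i,\varepsilon}$; local finiteness of the supports makes this a finite sum on each compact subset of $\Omega$, so $v_{\varepsilon}\in C^{\infty}(\Omega)$ and $\nabla v_{\varepsilon}=\sum_i\nabla w_{i,\varepsilon}$. Since $u=\sum_i\varphi_i u$ and $\nabla u=\sum_i\nabla(\varphi_i u)$ pointwise, the triangle inequality for the norm $\|\cdot\|_{L^{(p,q)}(\Omega)}$ yields $\|u-v_{\varepsilon}\|_{1,(p,q);\Omega}\le C\sum_i\varepsilon 2^{-i}\le C\varepsilon$, proving that $u\in H^{1,(p,q)}(\Omega)$. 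The main obstacle — and the reason this theorem is new for $q=\infty$ — is precisely the step using Theorem~\ref{properties of convolutions in Lorentz spaces}(ii): without the absolute continuity hypothesis, mollifications need not converge in $L^{(p,\infty)}$, as witnessed by Proposition~\ref{function in Lpinfty but not in Lpq q finite} and the function $u_p$ of Proposition~\ref{up in Lpinfty loc and in W1pinfty loc minus H1pinfty}; the hypothesis on $u$ and $\nabla u$ is exactly what is needed to overcome this obstruction.
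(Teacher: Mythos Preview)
Your argument is correct and is essentially the same partition-of-unity plus mollification scheme the paper uses; the only cosmetic difference is that the paper packages the mollification step inside Lemma~\ref{Product Rule for W1pq} (which already yields $u\psi_j\in H_0^{1,(p,q)}(\Omega)$ and hence a smooth approximant $\varphi_j$ directly), whereas you carry out the convolution explicitly via Theorem~\ref{properties of convolutions in Lorentz spaces}(ii). The paper also builds the partition of unity from an exhaustion $\Omega_j\subset\subset\Omega_{j+1}$ so that the annular supports $\Omega_{j+1}\setminus\overline{\Omega}_{j-1}$ make local finiteness of the approximants automatic, which is slightly cleaner than your step of shrinking $\delta_i$ to keep $\{K_i^{\delta_i}\}$ locally finite, but the substance is identical.
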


\begin{proof} Like in the proof of Ziemer \cite[Lemma 2.3.1]{Zie} and Costea \cite[Theorem 3.3.4]{Cos3}, 
we choose open sets $\Omega_0=\emptyset \subsetneq \Omega_j \subset \subset \Omega_{j+1}, j \ge 1$ such that $\cup_{j} \Omega_j=\Omega$ and a sequence of functions
$\psi_j, j\ge 1$ such that $\psi_j \in C_{0}^{\infty}(\Omega_{j+1} \setminus \overline{\Omega}_{j-1}),$ $0 \le \psi_j \le 1$ for every $j \ge 1$ and $\sum_j \psi_j \equiv 1$ in $\Omega.$

Let $\varepsilon>0$ be fixed. For every $j \ge 1,$ we have via Lemma \ref{Product Rule for W1pq} that $u \psi_j$ is in $H_{0}^{1,(p,q)}(\Omega).$
Moreover, since $\psi_j \in C_{0}^{\infty}(\Omega_{j+1} \setminus \overline{\Omega}_{j-1})$, we see that in fact $u \psi_j$ is in $H_{0}^{1,(p,q)}(\Omega_{j+1} \setminus \overline{\Omega}_{j-1})$ and thus, there exists $\varphi_{j}$ in $C_{0}^{\infty}(\Omega_{j+1} \setminus \overline{\Omega}_{j-1})$
such that
$$||\varphi_j- u \psi_j||_{1, (p,q); \Omega} \le ||\varphi_j- u \psi_j||_{L^{(p,q)}(\Omega)} + ||\nabla \varphi_j- \nabla (u \psi_j)||_{L^{(p,q)}(\Omega; {\mathbf{R}}^n)}
< \frac{\varepsilon}{2^{j}}$$
for all $j \ge 1$. If we define $\varphi \equiv \sum_{j \ge 1} \varphi_j,$ we see that $\varphi \in C^{\infty}(\Omega)$ because $\varphi$ can be written as a finite sum
of the functions $\varphi_i \in C_{0}^{\infty}(\Omega)$ on every bounded open set $U \subset \subset \Omega.$ Moreover,
$$||\varphi-u||_{1, (p,q); \Omega}=||\sum_{j \ge 1} (\varphi_j-u \psi_j)||_{1, (p,q); \Omega} \le \sum_{j \ge 1} ||\varphi_j-u \psi_j||_{1, (p,q); \Omega}< \sum_{j \ge 1} \frac{\varepsilon}{2^{j}}=\varepsilon.$$
This finishes the proof of the theorem.

\end{proof}

Now we prove that if $n \ge 1$ is an integer and $u \in W^{1,(p,q)}({\mathbf{R}}^n)$ is such that $u$ and $\nabla u$ have absolutely continuous $(p,q)$-norm, then $u \in H_{0}^{1,(p,q)}({\mathbf{R}}^n).$ This result is new for $q=\infty$ and $n \ge 1.$ For $1 \le q<\infty$ it yields $H^{1,(p,q)}({\mathbf{R}}^n)=H_{0}^{1, (p,q)}({\mathbf{R}}^n),$ a result proved in Costea \cite[Theorem 3.3.6]{Cos3} and Costea \cite[Theorem V.16]{Cos0} for $n \ge 2.$ Thus, we generalize and improve the result obtained in Costea \cite[Theorem 3.3.6]{Cos3} and Costea \cite[Theorem V.16]{Cos0}.

\begin{Theorem} \label{H=H_0 revisited}
Suppose $1<p<\infty$ and $1 \le q \le \infty.$ Suppose that $u \in W^{1,(p,q)}({\mathbf{R}}^n),$ where $n \ge 1$ is an integer. If $u$ and $\nabla u$ have absolutely continuous $(p,q)$-norm, then $u \in H_{0}^{1,(p,q)}({\mathbf{R}}^n).$
In particular, $H^{1, (p,q)}({\mathbf{R}}^n)=H_{0}^{1, (p,q)}({\mathbf{R}}^n)$ if $1 \le q<\infty.$
\end{Theorem}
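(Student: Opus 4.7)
The plan is to combine Theorem \ref{H=W revisited} with a standard radial cutoff procedure that exploits the absolute continuity of the $(p,q)$-norms of $u$ and of $\nabla u.$ Theorem \ref{H=W revisited}, applied with $\Omega={\mathbf{R}}^n,$ already yields $u \in H^{1,(p,q)}({\mathbf{R}}^n),$ so the task reduces to producing an approximation of $u$ by elements of $C_{0}^{\infty}({\mathbf{R}}^n)$ in the $H^{1,(p,q)}$-norm, rather than merely by smooth functions of finite norm.

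The concrete step is to fix $\varphi \in C_{0}^{\infty}({\mathbf{R}}^n)$ with $0 \le \varphi \le 1,$ $\varphi \equiv 1$ on $B(0,1)$ and $\mbox{supp } \varphi \subset \overline{B}(0,2),$ and then set $\varphi_k(x)=\varphi(x/k)$ for every integer $k \ge 1.$ Then $\varphi_k \in C_{0}^{\infty}({\mathbf{R}}^n),$ $\varphi_k \equiv 1$ on $B(0,k),$ $\mbox{supp } \varphi_k \subset \overline{B}(0,2k),$ and $|\nabla \varphi_k(x)| \le ||\nabla \varphi||_{L^{\infty}({\mathbf{R}}^n)}$ uniformly in $k.$ Because $u$ and $\nabla u$ have absolutely continuous $(p,q)$-norm, Lemma \ref{Product Rule for W1pq} applied with $\Omega={\mathbf{R}}^n$ gives $u \varphi_k \in H_{0}^{1,(p,q)}({\mathbf{R}}^n)$ together with $\nabla(u \varphi_k)=u \nabla \varphi_k + \varphi_k \nabla u$ for every $k \ge 1.$

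The core of the proof is then to show $u \varphi_k \rightarrow u$ in $H^{1,(p,q)}({\mathbf{R}}^n)$ as $k \rightarrow \infty.$ Writing
\begin{equation*}
u \varphi_k-u=(\varphi_k-1) u, \qquad \nabla(u \varphi_k)-\nabla u=(\varphi_k-1) \nabla u + u \nabla \varphi_k,
\end{equation*}
the three error terms are pointwise dominated by $|u| \chi_{E_k},$ $|\nabla u| \chi_{E_k}$ and $||\nabla \varphi||_{L^{\infty}({\mathbf{R}}^n)} |u| \chi_{A_k},$ respectively, where $E_k={\mathbf{R}}^n \setminus B(0,k)$ and $A_k=B(0,2k) \setminus B(0,k).$ Both $E_k$ and $A_k$ shrink to $\emptyset$ a.e.\ as $k \rightarrow \infty,$ so the absolute continuity of the $(p,q)$-norms of $u$ and $\nabla u$ (see Definition \ref{defn of fns with absolutely continuous norm} and Proposition \ref{characterization of fns with absolutely continuous norm}) forces the $L^{(p,q)}$-norm of each error term to tend to $0.$ Since $H_{0}^{1,(p,q)}({\mathbf{R}}^n)$ is closed in the Banach space $H^{1,(p,q)}({\mathbf{R}}^n),$ the limit $u$ must itself lie in $H_{0}^{1,(p,q)}({\mathbf{R}}^n).$

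The main obstacle is the case $q=\infty,$ which is the genuinely new content of the theorem: in this range neither $u$ nor $\nabla u$ automatically has absolutely continuous $(p,\infty)$-norm (cf.\ Proposition \ref{function in Lpinfty but not in Lpq q finite}), so the hypothesis is used twice---once to invoke Lemma \ref{Product Rule for W1pq} and place $u \varphi_k$ in $H_{0}^{1,(p,\infty)}({\mathbf{R}}^n),$ and once to drive the three error terms to zero. For $1 \le q<\infty$ the absolute continuity of norms is automatic, so the argument also recovers $H^{1,(p,q)}({\mathbf{R}}^n)=H_{0}^{1,(p,q)}({\mathbf{R}}^n)$ from \cite[Theorem 3.3.6]{Cos3} and \cite[Theorem V.16]{Cos0}, now in the broader setting $n \ge 1.$
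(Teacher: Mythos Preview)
Your argument is correct and follows essentially the same route as the paper: both proofs invoke Theorem \ref{H=W revisited}, multiply $u$ by a sequence of radial cutoffs supported in expanding balls, apply Lemma \ref{Product Rule for W1pq} to place each product in $H_{0}^{1,(p,q)}({\mathbf{R}}^n)$, and then use the absolute continuity of $||u||_{(p,q)}$ and $||\nabla u||_{(p,q)}$ on the complements of the balls to drive the three error terms to zero. The only cosmetic difference is that the paper explicitly selects $\psi_j \in C_{0}^{\infty}({\mathbf{R}}^n)$ with $||\psi_j-u\varphi_j||_{1,(p,q)}<\varepsilon/4$ and shows $\psi_j \rightarrow u$, whereas you invoke the closedness of $H_{0}^{1,(p,q)}({\mathbf{R}}^n)$ directly; these are equivalent formulations of the same idea.
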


\begin{proof} Let $u \in W^{1,(p,q)}({\mathbf{R}}^n)$ such that $u$ and $\nabla u$ have absolutely continuous $(p,q)$-norm. (This is always the case when $1 \le q<\infty$). Then from Theorem \ref{H=W revisited} it follows that $u$ is in fact in $H^{1,(p,q)}({\mathbf{R}}^n).$

For $j=1,2,\ldots$ choose functions $\varphi_j \in C_{0}^{\infty}(B(0, j+1)),$ $0 \le \varphi_j \le 1,$ such that
$\varphi_j(x)=1$ for each $x \in \overline{B}(0,j).$ Moreover, we choose these functions $\varphi_j$ to be radial and $2$-Lipschitz for all $j \ge 1.$ Then $u_j:=u \varphi_j \in H_{0}^{1,(p,q)}(B(0,j+1))$ for all $j \ge 1$ via Lemma \ref{Product Rule for W1pq}.

Fix $\varepsilon>0.$ For every $j \ge 1$ choose $\psi_j \in C_{0}^{\infty}(B(0,j+1))$ such that
$$||\psi_j-u_j||_{1,(p,q);{\mathbf{R}}^n}=||\psi_j-u \varphi_j||_{1,(p,q);{\mathbf{R}}^n}<\frac{\varepsilon}{4}.$$

For every $j \ge 1$ integer we have, via the definition of the $H^{1,(p,q)}$-norm and via Lemma \ref{Product Rule for W1pq}
\begin{eqnarray*}
||u-u_j||_{1,(p,q); {\mathbf{R}}^n} &\le& ||u-u_j||_{L^{(p,q)}({\mathbf{R}}^n)}+||\nabla u - \nabla u_j||_{L^{(p,q)}({\mathbf{R}}^n; {\mathbf{R}}^n)}\\
&\le& ||u (1-\varphi_j)||_{L^{(p,q)}({\mathbf{R}}^n)}+
||(1-\varphi_j) \nabla u||_{L^{(p,q)}({\mathbf{R}}^n;{\mathbf{R}}^n)}\\
& &+||u \nabla \varphi_j||_{L^{(p,q)}({\mathbf{R}}^n;{\mathbf{R}}^n)}.
\end{eqnarray*}

Since $0 \le \varphi_j \le 1$ is a 2-Lipschitz smooth function supported in $B(0,j+1)$ such that $\varphi_j=1$ in $\overline{B}(0,j),$ this yields
\begin{eqnarray*}
||u-u_j||_{1,(p,q); {\mathbf{R}}^n} &\le& ||u \chi_{{\mathbf{R}}^n \setminus B(0, j)}||_{L^{(p,q)}({\mathbf{R}}^n)}\\
& &+||\nabla u \chi_{{\mathbf{R}}^n \setminus B(0, j)}||_{L^{(p,q)}({\mathbf{R}}^n;{\mathbf{R}}^n)}\\
& &+ || \nabla \varphi_j ||_{L^{\infty}({\mathbf{R}}^n)}  ||u \chi_{{\mathbf{R}}^n \setminus B(0, j)}||_{L^{(p,q)}({\mathbf{R}}^n;{\mathbf{R}}^n)}\\
&\le&3||u \chi_{{\mathbf{R}}^n \setminus B(0, j)}||_{L^{(p,q)}({\mathbf{R}}^n)}\\
& &+||\nabla u \chi_{{\mathbf{R}}^n \setminus B(0, j)}||_{L^{(p,q)}({\mathbf{R}}^n;{\mathbf{R}}^n)}
\end{eqnarray*}
for all $j \ge 1.$

Since $u$ and $\nabla u$ have absolutely continuous $(p,q)$-norm, we can choose an integer $j_0>1$ such that
$$||u \chi_{{\mathbf{R}}^n \setminus B(0, j)}||_{L^{(p,q)}({\mathbf{R}}^n)}+||\nabla u \chi_{{\mathbf{R}}^n \setminus B(0, j)}||_{L^{(p,q)}({\mathbf{R}}^n; {\mathbf{R}}^n)}<\frac{\varepsilon}{4}$$
for all $j \ge j_0.$

Thus, $(\psi_j)_{j \ge 1} \subset C_{0}^{\infty}({\mathbf{R}}^n)$ and
\begin{eqnarray*}
||\psi_j-u||_{1,(p,q); {\mathbf{R}}^n} &\le& ||\psi_j-u_j||_{1,(p,q); {\mathbf{R}}^n}+||u-u_j||_{1,(p,q); {\mathbf{R}}^n}\\
&<&\frac{\varepsilon}{4}+ 3||u \chi_{{\mathbf{R}}^n \setminus B(0, j)}||_{L^{(p,q)}({\mathbf{R}}^n)}\\
&&+||\nabla u \chi_{{\mathbf{R}}^n \setminus B(0, j)}||_{L^{(p,q)}({\mathbf{R}}^n;{\mathbf{R}}^n)}<\varepsilon
\end{eqnarray*}
for all $j \ge j_0.$
This finishes the proof of the theorem.

\end{proof}

We prove now that $W^{1, (p,q_1)}(\Omega) \subsetneq W^{1,(p,q_2)}(\Omega)$ whenever $1<p<\infty$ and $1 \le q_1 < q_2\le \infty.$

\begin{Theorem} \label{W1pr strictly included in W1ps}
Let $n \ge 1$ be an integer and $r>0$ be a positive number. Suppose $1<p<\infty$ and $1 \le q_1< q_2\le\infty.$ Let $\alpha$ be a number in $(0,1]$ such that $1 \le q_1 \le \frac{1}{\alpha}<q_2 \le \infty.$  Let $v_{r, \alpha, n, p}:B(0,r) \rightarrow [0,\infty]$ be the function defined in {\rm(\ref{defn of vralphanp})}.

Then

\par {\rm{(i)}} $v_{r, \alpha, n, p} \in H_{0}^{1, (p, q_2)}(B(0,r)) \setminus H^{1, (p,q_1)}(B(0,r)).$

\par {\rm{(ii)}} $v_{r, \alpha, n, p} \in H^{1, (p, q_2)}(B^{*}(0,r)) \setminus H^{1, (p,q_1)}(B^{*}(0,r)).$

\end{Theorem}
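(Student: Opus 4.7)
The plan is to prove the set-difference statements (i) and (ii) by handling the two directions separately, since one direction is essentially free from Theorem \ref{Lpr strictly included in Lps via grad of smooth fns} while the other (namely $v_{r,\alpha,n,p}\in H_0^{1,(p,q_2)}(B(0,r))$) contains all the real work. The negative halves of both (i) and (ii) are immediate: by Theorem \ref{Lpr strictly included in Lps via grad of smooth fns}(vi), $|\nabla v_{r,\alpha,n,p}|\notin L^{p,q_1}(B(0,r))$, so $v_{r,\alpha,n,p}\notin W^{1,(p,q_1)}(B(0,r))$, and Theorem \ref{H included in W} then rules out $v_{r,\alpha,n,p}\in H^{1,(p,q_1)}(B(0,r))$; since the Lorentz norms on $B(0,r)$ and $B^{*}(0,r)$ coincide, the same reasoning rules out $v_{r,\alpha,n,p}\in H^{1,(p,q_1)}(B^{*}(0,r))$. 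The positive half of (ii) is also quick: $v_{r,\alpha,n,p}\in C^{\infty}(B^{*}(0,r))$ by Theorem \ref{Lpr strictly included in Lps via grad of smooth fns}(i), $|\nabla v_{r,\alpha,n,p}|\in L^{p,q_2}(B^{*}(0,r))$ by \ref{Lpr strictly included in Lps via grad of smooth fns}(vi), and a direct size estimate on $v^{*}$ (bounded when $p>n$, order $\log$ when $p=n$, order $|x|^{1-n/p}$ when $1<p<n$) yields $v_{r,\alpha,n,p}\in L^{(p,q_2)}(B^{*}(0,r))$, so $v_{r,\alpha,n,p}$ lies in the subspace of smooth functions with finite norm whose completion defines $H^{1,(p,q_2)}(B^{*}(0,r))$.

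For the core positive half of (i), I would first establish $v_{r,\alpha,n,p}\in W^{1,(p,q_2)}(B(0,r))$ by showing that the classical gradient on $B^{*}(0,r)$ serves as a distributional gradient on the full ball. Testing against $\varphi\in C_{0}^{\infty}(B(0,r))$, write $\int_{B(0,r)}v_{r,\alpha,n,p}\partial_i\varphi=\lim_{\rho\to 0}\int_{B(0,r)\setminus B(0,\rho)}v_{r,\alpha,n,p}\partial_i\varphi$ by dominated convergence (using $v_{r,\alpha,n,p}\in L^{1}_{loc}$), integrate by parts on the smooth annulus, and bound the boundary contribution on $\partial B(0,\rho)$ by $Cf_{rad,r,\alpha,p}(\rho)\rho^{n-1}$; this tends to $0$ in all three ranges ($p>n$: $f_{rad}$ bounded and $\rho^{n-1}\to 0$ for $n\ge 2$, while for $n=1<p$ the term reduces to $f_{rad,r,\alpha,p}(\rho)[\varphi(\rho)-\varphi(-\rho)]\to 0$ by continuity; $p=n\ge 2$: $\log\cdot\rho^{n-1}\to 0$; $1<p<n$: product equals $\rho^{n(1-1/p)}\to 0$), and the interior term converges to $-\int\partial_i v_{r,\alpha,n,p}\cdot\varphi$ by dominated convergence since $|\nabla v_{r,\alpha,n,p}|\in L^{1}_{loc}$.

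The remaining step is to upgrade $v_{r,\alpha,n,p}\in W^{1,(p,q_2)}$ to $v_{r,\alpha,n,p}\in H_0^{1,(p,q_2)}$. I would first verify that $v_{r,\alpha,n,p}$ and $\nabla v_{r,\alpha,n,p}$ have absolutely continuous $(p,q_2)$-norm on $B(0,r)$; for $q_2<\infty$ this is automatic, while for $q_2=\infty$ it follows from $t^{1/p}u_{r,\alpha,p}(t)=\ln^{-\alpha}(\Omega_n r^n e^{p\alpha}/t)\to 0$ as $t\to 0$ (and an analogous decay of $t^{1/p}v^{*}(t)$), which yields $\|f\chi_{E_k}\|_{L^{p,\infty}}\le\sup_{0<t\le|E_k|}t^{1/p}f^{*}(t)\to 0$ for any $E_k\to\emptyset$ in $B(0,r)$. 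Next, set $v_\varepsilon:=(v_{r,\alpha,n,p}-\varepsilon)_{+}$; since $f_{rad,r,\alpha,p}$ is strictly decreasing with $f_{rad,r,\alpha,p}(r)=0$, each $v_\varepsilon$ is compactly supported in $B(0,r)$, and the errors $v_{r,\alpha,n,p}-v_\varepsilon=\min(v_{r,\alpha,n,p},\varepsilon)$ and $\nabla v_{r,\alpha,n,p}-\nabla v_\varepsilon=\nabla v_{r,\alpha,n,p}\chi_{\{v_{r,\alpha,n,p}\le\varepsilon\}}$ are supported on shrinking sets, hence vanish in $L^{(p,q_2)}$ by absolute continuity. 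Finally, each $v_\varepsilon$ inherits absolutely continuous $(p,q_2)$-norm by Proposition \ref{characterization of fns with absolutely continuous norm}, so its mollification $v_\varepsilon*\eta_\delta\in C_{0}^{\infty}(B(0,r))$ (for $\delta$ small) converges to $v_\varepsilon$ in $H^{1,(p,q_2)}$ by Theorem \ref{properties of convolutions in Lorentz spaces}(ii), and a diagonal argument concludes $v_{r,\alpha,n,p}\in H_{0}^{1,(p,q_2)}(B(0,r))$.

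The main obstacle is the endpoint $q_2=\infty$: absolute continuity of the $(p,\infty)$-norm is not automatic and in general can fail (see Proposition \ref{function in Lpinfty but not in Lpq q finite}), so without it both the truncation and the mollification step would break down. What rescues the argument is precisely the logarithmic correction $\ln^{-\alpha}$ built into (\ref{defn of uralphap}): it breaks the scale invariance that, for the purely polynomial $|x|^{-n/p}$ of Proposition \ref{function in Lpinfty but not in Lpq q finite}, makes $t^{1/p}u^{*}(t)$ constant, and instead forces $t^{1/p}u_{r,\alpha,p}^{*}(t)\to 0$ at $t=0$. This tiny extra decay is what places $v_{r,\alpha,n,p}$, but not $u_{r,p}$, inside $H_{0}^{1,(p,\infty)}(B(0,r))$.
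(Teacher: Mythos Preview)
Your argument is correct, but it follows a genuinely different route from the paper's. The paper immediately reduces to the case $q_2<\infty$ by picking $q_3$ with $\frac{1}{\alpha}<q_3<q_2$ and using the inclusion $H_0^{1,(p,q_3)}\subset H_0^{1,(p,q_2)}$ from Theorem~\ref{W1pqloc included in H1sloc s<p}(iii); this sidesteps the endpoint $q_2=\infty$ entirely. With $1<q_2<\infty$ in hand, the paper then truncates $v_{r,\alpha,n,p}$ \emph{from above} (replacing it by a constant on small balls around the origin) to produce a bounded sequence of Lipschitz functions in $H_0^{1,(p,q_2)}(B(0,r))$, bounds the $L^{p,q_2}$-norm of the truncations via the Sobolev--Poincar\'e inequality of Theorem~\ref{Sobolev-Poincare for Sobolev-Lorentz}(ii) when $n\ge 2$, and invokes reflexivity (Theorem~\ref{HKM93 Thm132}) together with pointwise convergence to conclude. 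Your approach instead confronts $q_2=\infty$ head-on by exploiting the logarithmic factor in $u_{r,\alpha,p}$ to verify absolute continuity of the $(p,\infty)$-norm, establishes $v\in W^{1,(p,q_2)}(B(0,r))$ by a direct integration-by-parts argument, and then truncates \emph{from below} to obtain compactly supported approximants that mollify into $C_0^\infty(B(0,r))$; this is essentially a hands-on re-derivation of Lemma~\ref{zero on bdry Omega implies membership in H0}, which you could in fact cite directly once absolute continuity is checked. The paper's route is shorter and leans on soft functional-analytic tools (reflexivity, Sobolev--Poincar\'e), while yours is more elementary, works uniformly in $q_2\in(1,\infty]$, and makes transparent the role of the logarithmic correction at the endpoint.
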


\begin{proof} By choosing $q_3$ such that $\frac{1}{\alpha}<q_3<q_2$ if necessary, we can assume without loss of generality via Theorem \ref{W1pqloc included in H1sloc s<p} (iii) that $q_2<\infty$ throughout the proof of this theorem.

 Since $n, p, r, \alpha, q_1$ and $q_2$ are fixed here, we simplify the notations throughout the proof of the theorem. We let $v_{n, p}:=v_{r, \alpha, n, p}$ and $f_{\alpha, p}:=f_{rad, r, \alpha, p},$ where
 $f_{rad, r, \alpha, p}$ is the function defined in (\ref{defn of fradralphap}).

 Since $$||\nabla v_{n,p}||_{L^{p,q_1}(B^{*}(0,r); {\mathbf{R}}^n)}=\infty,$$ it follows immediately via Theorem \ref{H=W revisited} that $v_{n, p} \notin H^{1, (p, q_1)}(B^{*}(0,r))$ and consequently $v_{n, p} \notin H^{1, (p, q_1)}(B(0,r))=W^{1, (p, q_1)}(B(0,r)).$

 We want to show that $v_{n,p} \in H_{0}^{1,(p,q_2)}(B(0,r)).$ In order to do that, we resort to a truncation argument and we invoke Theorem \ref{HKM93 Thm132}.

 We know from the proof of Theorem \ref{Lpr strictly included in Lps via grad of smooth fns} that $f_{\alpha, p}$ is in $C^{\infty}((0,r)),$ positive and strictly decreasing on $(0,r).$ Moreover, we have $\lim_{t \rightarrow 0} f_{\alpha, p}(t)=\infty$ when $1<p \le n$ and
 $\lim_{t \rightarrow 0} f_{\alpha, p}(t)<\infty$ when $n<p<\infty.$

 For every integer $k \ge 1$ we truncate the function $v_{n, p}$ on the set $B(0, \frac{r}{k+1})$ and we denote this truncation by $v_{n, p, k}.$ Specifically, for $k \ge 1$ integer we define $u_{n, p, k}$ on $B(0,r)$ by
 $$v_{n, p, k}(x)=\left\{\begin{array}{ll}
 f_{\alpha, p}(\frac{r}{k+1}) & \mbox{ if $0 \le |x| \le \frac{r}{k+1}$}\\
 v_{n, p}(x)=f_{\alpha, p}(|x|)  & \mbox{ if $\frac{r}{k+1} < |x| < r.$}
 \end{array}
 \right.
 $$
 It is easy to see that $0 \le v_{n,p,k} \le v_{n,p}$ pointwise in $B(0,r)$ for all $k \ge 1.$ Moreover,  all the functions $v_{n,p,k}$ are Lipschitz on $B(0,r)$ and can be extended continuously by $0$ on $\partial B(0,r).$ More precisely, for all $k \ge 1$ we have
 $$\nabla v_{n, p, k}(x)=\left\{\begin{array}{ll}
 0 & \mbox{ if $0 \le |x| < \frac{r}{k+1}$}\\
 \nabla v_{n, p}(x)=f'_{\alpha, p}(|x|) \frac{x}{|x|}  & \mbox{ if $\frac{r}{k+1} < |x| < r.$}
 \end{array}
 \right.
 $$
 In particular, for every $k \ge 1$ we have $|\nabla v_{n, p, k}| \le |\nabla v_{n,p}|$ almost everywhere in $B(0,r).$ Thus, we have that $(v_{n, p, k})_{k \ge 1} \subset H_{0}^{1,(p, q_2)}(B(0,r)).$ We claim that the sequence $v_{n,p,k}$ is bounded in $H_{0}^{1,(p,q_2)}(B(0,r))$ and in $H^{1,(p,q_2)}(B^{*}(0,r)).$

 We study the cases $n=1$ and $n>1$ separately.

 Case I. We suppose first that $n=1.$ Then $p>n$ and from Theorem \ref{Lpr strictly included in Lps via grad of smooth fns} (iv) it follows that $v_{n,p}$ is continuous and bounded on $B(0,r).$ The boundedness of the sequence $v_{n,p,k}$ in $H_{0}^{1,(p,q_2)}(B(0,r))$ and in $H^{1,(p,q_2)}(B^{*}(0,r))$ is immediate in this case since $0 \le v_{n,p,k} \le v_{n,p}$ pointwise in $B(0,r),$ $|\nabla v_{n,p}| \in L^{(p,q_2)}(B(0,r))$ and since $|\nabla v_{n,p,k}| \le |\nabla v_{n,p}|$ almost everywhere in $B(0,r)$ for every $k \ge 1.$

 Case II. We assume now that $n>1.$ Via Theorem \ref{Sobolev-Poincare for Sobolev-Lorentz} (ii) we have
 \begin{eqnarray*}
 ||v_{n,p,k}||_{L^{p,q_2}(B(0,r))} &\le& C(n,p,q_2) \, |\Omega|^{\frac{1}{n}} ||\nabla v_{n,p,k}||_{L^{p,q_2}(B(0,r); {\mathbf{R}^n})}\\
 &\le& C(n,p,q_2) \, |\Omega|^{\frac{1}{n}} ||\nabla v_{n,p}||_{L^{p,q_2}(B(0,r); {\mathbf{R}^n})}
 \end{eqnarray*}
 for every $k \ge 1$ integer.

 Thus, we proved that the sequence $v_{n,p,k}$ is bounded in $H_{0}^{1,(p,q_2)}(B(0,r))$ and in
 $H^{1,(p,q_2)}(B^{*}(0,r))$ whenever $n \ge 1,$ $1<p<\infty,$  and $1<q_2<\infty.$ The reflexivity of these two spaces and the pointwise convergence of $v_{n,p,k}$ to $v_{n,p}$ on $B^{*}(0,r)$ imply immediately via Theorem \ref{HKM93 Thm132} that $v_{n,p}$ is in fact in $H_{0}^{1,(p,q_2)}(B(0,r))$ and in $H^{1,(p,q_2)}(B^{*}(0,r)).$ Moreover, by invoking Theorem \ref{Holder 1/p' continuity for u in W1pq n equal 1} (i) for $n=1$ and respectively Theorem \ref{Morrey embedding 1<n<p} (iv) for $n>1,$ we see that $v_{n,p}$ is in fact H\"{o}lder continuous in $\overline{B}(0,r)$ with exponent $1-\frac{n}{p}.$
 This finishes the proof.

\end{proof}

\subsection{Chain Rule Results}

We recall the chain rule property for the Sobolev-Lorentz spaces, proved in Costea \cite{Cos3} for $n \ge 2.$

\begin{Theorem} \label{Chain Rule revisited}
{\rm(See Costea \cite[Theorem 3.4.1]{Cos3}).}
Let $\Omega \subset {\mathbf{R}}^n$ be an open set, where $n \ge 1$ is an integer.
Suppose $1<p<\infty$ and $1\le q\le \infty.$
Suppose that $f \in C^{1}({\mathbf{R}}),$ $f(0)=0$ and $f'$ is bounded. If $u \in
W^{1, (p,q)}(\Omega),$ then $f \circ u \in W^{1, (p,q)}(\Omega)$ and
\begin{equation*}
\nabla(f \circ u)=f'(u) \nabla u.
\end{equation*}
Moreover, if $u \in H_{0}^{1, (p,q)}(\Omega),$ then $f \circ u \in
H_{0}^{1, (p,q)}(\Omega).$
\end{Theorem}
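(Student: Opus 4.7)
The plan is to reduce to the classical chain rule in $H^{1,s}_{\mathrm{loc}}(\Omega)$ for some $s<p$, then exploit the Lipschitz estimate on $f$ to return to the Lorentz scale, and finally handle the $H_{0}$ part by an approximation argument whose delicate point is the case $q=\infty$.

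First I would observe that since $f\in C^{1}(\mathbf{R})$ with $f'$ bounded, $f$ is Lipschitz with constant $M:=\|f'\|_{\infty}$, and the condition $f(0)=0$ yields the pointwise bounds $|f(u)|\le M|u|$ and $|f'(u)\nabla u|\le M|\nabla u|$ a.e. By Theorem~\ref{W1pqloc included in H1sloc s<p}(i), $u\in H^{1,s}_{\mathrm{loc}}(\Omega)$ for every $s\in(1,p)$, so the standard Sobolev chain rule for Lipschitz outer functions gives $f\circ u\in H^{1,s}_{\mathrm{loc}}(\Omega)$ with distributional gradient $\nabla(f\circ u)=f'(u)\nabla u$ almost everywhere in $\Omega$. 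The two Lipschitz bounds place $f\circ u$ in $L^{(p,q)}(\Omega)$ and $f'(u)\nabla u$ in $L^{(p,q)}(\Omega;\mathbf{R}^{n})$, so $f\circ u\in W^{1,(p,q)}(\Omega)$, which proves the first assertion.

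For the $H_{0}$ assertion, pick $\phi_{j}\in C_{0}^{\infty}(\Omega)$ with $\phi_{j}\to u$ in $H^{1,(p,q)}(\Omega)$. Each $f\circ\phi_{j}$ is Lipschitz and compactly supported in $\Omega$ (using $f(0)=0$), so both $f\circ\phi_{j}$ and $\nabla(f\circ\phi_{j})=f'(\phi_{j})\nabla\phi_{j}$ are bounded with compact support in $\Omega$ and therefore have absolutely continuous $(p,q)$-norm; a mollification whose support stays inside $\Omega$ for large parameter, together with Theorem~\ref{properties of convolutions in Lorentz spaces}, then places $f\circ\phi_{j}\in H_{0}^{1,(p,q)}(\Omega)$. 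Since $H_{0}^{1,(p,q)}(\Omega)$ is closed in $H^{1,(p,q)}(\Omega)$, it suffices to show $f\circ\phi_{j}\to f\circ u$ in $H^{1,(p,q)}(\Omega)$. The $L^{(p,q)}$-convergence of the functions is immediate from $|f\circ\phi_{j}-f\circ u|\le M|\phi_{j}-u|$; for the gradients I would split
\[
f'(\phi_{j})\nabla\phi_{j}-f'(u)\nabla u=f'(\phi_{j})\bigl(\nabla\phi_{j}-\nabla u\bigr)+\bigl(f'(\phi_{j})-f'(u)\bigr)\nabla u.
\]
The first summand is dominated by $M|\nabla\phi_{j}-\nabla u|\to 0$ in $L^{(p,q)}(\Omega;\mathbf{R}^{n})$, and after passing to an a.e.-convergent subsequence (convergence in $L^{(p,q)}$ forces convergence in measure, also when $q=\infty$, directly from the distribution function), continuity of $f'$ and the uniform bound $|f'(\phi_{j})-f'(u)|\le 2M$ let me pass the second summand to zero via Proposition~\ref{characterization of fns with absolutely continuous norm}.

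The main obstacle is the case $q=\infty$, where dominated convergence in $L^{(p,\infty)}$ is \emph{not} automatic. The key observation that saves the argument is that $u\in H_{0}^{1,(p,\infty)}(\Omega)$ forces both $u$ and $\nabla u$ to have absolutely continuous $(p,\infty)$-norm, because the subspace of functions with absolutely continuous norm is closed in $L^{(p,\infty)}$ and contains every $C_{0}^{\infty}$ function. With this in hand Proposition~\ref{characterization of fns with absolutely continuous norm} applies uniformly in $1\le q\le\infty$, delivering $(f'(\phi_{j})-f'(u))\nabla u\to 0$ in $L^{(p,q)}(\Omega;\mathbf{R}^{n})$ along the chosen subsequence and completing the proof.
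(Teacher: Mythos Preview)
Your proof is correct and follows essentially the same route as the paper: reduce to the classical chain rule in $H^{1,s}_{\mathrm{loc}}$ via Theorem~\ref{W1pqloc included in H1sloc s<p}, use the Lipschitz bound to return to the Lorentz scale, then for the $H_0$ part approximate by $C_0^\infty$ functions, split the gradient difference the same way, and handle the cross term $(f'(\phi_j)-f'(u))\nabla u$ using the absolute continuity of the $(p,q)$-norm of $\nabla u$ together with Proposition~\ref{characterization of fns with absolutely continuous norm}. The only cosmetic differences are that the paper observes directly that $f\circ\phi_j\in C_0^1(\Omega)\subset H_0^{1,(p,q)}(\Omega)$ rather than invoking mollification explicitly, and it assumes a.e.\ convergence of $\phi_j\to u$ at the outset instead of passing to a subsequence later; your more detailed justification of why $\nabla u$ has absolutely continuous norm when $q=\infty$ is a welcome clarification of what the paper calls ``immediate.''
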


\begin{proof} We have
\begin{equation} \label{f circ u dominated by u}
|f \circ u(x)|=|f(u(x))-f(0)| \le ||f'||_{L^{\infty}({\mathbf{R}})} |u(x)| \mbox{ for a.e. $x$ in $\Omega$},
\end{equation}
which implies that $f \circ u \in L^{(p,q)}(\Omega).$

Let $s \in (1,p)$ be fixed. We have that $u \in W^{1,(p,q)}(\Omega),$ hence by Theorem \ref{W1pqloc included in H1sloc s<p} it follows that $u \in H^{1,s}_{loc}(\Omega).$ This and (\ref{f circ u dominated by u}) imply via Ziemer \cite[Theorem 2.1.11]{Zie} that $f \circ u \in H^{1,s}_{loc}(\Omega)$ and
$$\nabla (f \circ u)= f'(u) \nabla u.$$
Thus, we have $f \circ u \in L^{(p,q)}(\Omega),$ $\nabla (f \circ u)=f'(u) \nabla u \in L^{(p,q)}(\Omega; {\mathbf{R}}^n),$ which implies
that $f \circ u \in W^{1,(p,q)}(\Omega).$

We want to prove that $f \circ u \in H_{0}^{1,(p,q)}(\Omega)$ if $u \in H_{0}^{1,(p,q)}(\Omega).$
This was done in Costea \cite[Theorem 3.4.1]{Cos3} in the case $n \ge 2,$ but the proof is valid for $n=1$ as well. We present it for the convenience of the reader.

Suppose that $u \in H_{0}^{1,(p,q)}(\Omega).$ Then it follows immediately that $u$ and $\nabla u$ have absolutely continuous $(p,q)$-norm. From the first part of the proof we already know that $f \circ u \in W^{1,(p,q)}(\Omega)$ because $u \in H_{0}^{1,(p,q)}(\Omega) \subset W^{1,(p,q)}(\Omega).$ Let $u_j, j \ge 1$ be a sequence of functions in $C_{0}^{\infty}(\Omega)$ that converges to $u$ in $H_{0}^{1,(p,q)}(\Omega).$
Without loss of generality, we can assume that $u_j \rightarrow u$ pointwise almost everywhere in $\Omega.$ Since the functions $u_j$ are compactly supported in
$\Omega$ and $f(0)=0,$ it follows that the functions $f \circ u_j$ are compactly supported in $\Omega.$ Moreover, since the functions $u_j$ are in $C^{1}(\Omega)$ and $f$ is in $C^{1}({\mathbf{R}}),$ it follows that the functions $f \circ u_j$ are in $C^{1}(\Omega).$ Thus, $f \circ u_j, j \ge 1$ is a sequence of functions in $C_{0}^{1}(\Omega) \subset H_{0}^{1,(p,q)}(\Omega)$ with $\nabla (f \circ u_j)=f'(u_j) \nabla u_j, j \ge 1.$ Since $f'$ is bounded on ${\mathbf{R}},$ we have

$$|(f \circ u_j)(x)-(f \circ u)(x)| \le ||f'||_{L^{\infty}({\mathbf{R}})} |u_j(x)-u(x)| \mbox{ for a.e. $x$ in } \Omega.$$
This implies that $f \circ u_j$ converges to $f \circ u$ in $L^{(p,q)}(\Omega).$

We have
\begin{eqnarray*}
||f'(u_j) \nabla u_j-f'(u) \nabla u||_{L^{(p,q)}(\Omega; {\mathbf{R}}^n)}&\le& ||f'||_{L^{\infty}({\mathbf{R}})} ||\nabla u_j-\nabla u||_{L^{(p,q)}(\Omega; {\mathbf{R}}^n)}\\
&&+ ||(f'(u_j)-f'(u)) \nabla u||_{L^{(p,q)}(\Omega; {\mathbf{R}}^n)}.
\end{eqnarray*}
The first term of the right-hand side trivially converges to $0.$ The second term of the right-hand side converges to $0$ via Bennett-Sharpley \cite[Proposition I.3.6]{BS} since $\nabla u$ has absolutely continuous $(p,q)$-norm, $f'$ is bounded and $f'(u_j)$ converges to $f'(u)$ pointwise
almost everywhere in $\Omega.$ Consequently, the sequences $f(u_j), f'(u_j) \nabla u_j$ converge to $f(u), f'(u) \nabla u$ respectively and thus $\nabla (f \circ u)=f'(u) \nabla u.$ This finishes the proof.

\end{proof}

Recall the notation
$$u^{+}=\max(u,0) \mbox { and } u^{-}=\min(u,0).$$

\begin{Lemma} \label{pos part revisited}
{\rm(See Costea \cite[Lemma V.12]{Cos0} and \cite[Lemma 3.4.4]{Cos3}).}
Suppose $1<p<\infty$ and $1\le q\le \infty.$
If $u \in W^{1, (p,q)}(\Omega),$ then $u^{+} \in W^{1,
(p,q)}(\Omega)$ and

$$ \nabla u^{+}=\left\{ \begin{array}{cl}
  \nabla u & \mbox{if $u>0$} \\
         0 & \mbox{if $u \le 0.$}
 \end{array}
\right.$$

\end{Lemma}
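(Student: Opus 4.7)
The plan is to approximate the map $t \mapsto t^+$ by smooth functions with bounded derivative, apply the chain rule of Theorem \ref{Chain Rule revisited}, then pass to the limit in the distributional definition. For every $\varepsilon > 0$ I would set
$$f_\varepsilon(t) = \begin{cases} \sqrt{t^2+\varepsilon^2} - \varepsilon, & t \ge 0, \\ 0, & t < 0, \end{cases}$$
so that $f_\varepsilon \in C^1(\mathbf{R})$, $f_\varepsilon(0) = 0$, and $0 \le f'_\varepsilon \le 1$ with $f'_\varepsilon(t) \to \chi_{\{t > 0\}}$ pointwise as $\varepsilon \to 0.$ Theorem \ref{Chain Rule revisited} then yields $f_\varepsilon \circ u \in W^{1,(p,q)}(\Omega)$ with $\nabla(f_\varepsilon \circ u) = f'_\varepsilon(u) \, \nabla u$ for every $\varepsilon > 0$.

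Next I would identify the distributional gradient of $u^+$ by passing to the limit. By Theorem \ref{H included in W} (applied to any $\Omega' \subset \subset \Omega$), both $u$ and $\nabla u$ belong to $L^1_{\mathrm{loc}}(\Omega).$ Fix a test function $\varphi \in C_0^\infty(\Omega)$ and a coordinate direction $i$; the definition of weak derivative gives
$$\int_\Omega (f_\varepsilon \circ u)\, \partial_i \varphi \, dx \;=\; -\int_\Omega f'_\varepsilon(u)\, \partial_i u\, \varphi \, dx.$$
Since $|f_\varepsilon(u)| \le |u|$ and $|f'_\varepsilon(u)\, \partial_i u\, \varphi| \le \|\varphi\|_\infty |\partial_i u| \, \chi_{\operatorname{supp}\varphi}$, with both dominating functions in $L^1(\operatorname{supp}\varphi),$ dominated convergence produces
$$\int_\Omega u^+ \, \partial_i \varphi \, dx \;=\; -\int_\Omega \chi_{\{u>0\}}\, \partial_i u\, \varphi \, dx.$$
Hence the $i$th weak partial derivative of $u^+$ equals $\chi_{\{u > 0\}}\, \partial_i u$, which is precisely the formula claimed (on $\{u \le 0\}$ the right-hand side vanishes almost everywhere).

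Finally, membership $u^+ \in W^{1,(p,q)}(\Omega)$ follows from the pointwise inequalities $|u^+| \le |u|$ and $|\nabla u^+| \le |\nabla u|$ together with the monotonicity property $g^* \le f^*$ whenever $|g| \le |f|$ a.e., recorded in Section \ref{Section Lorentz spaces}; this forces $\|u^+\|_{L^{(p,q)}(\Omega)} \le C\|u\|_{L^{(p,q)}(\Omega)}$ and likewise for the gradient. No significant obstacle is expected: the chain rule of Theorem \ref{Chain Rule revisited} supplies the essential work, and the remaining step is the standard dominated-convergence argument on $\operatorname{supp}\varphi$, which lives purely in $L^1$ and does not interact with the finer structure of the Lorentz (quasi)norm.
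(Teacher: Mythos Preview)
Your argument is correct. The approximation by $f_\varepsilon$, the application of Theorem \ref{Chain Rule revisited}, the dominated-convergence passage to the limit in the distributional identity, and the final pointwise domination step are all sound.

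The paper takes a shorter route: it fixes $s\in(1,p)$, invokes Theorem \ref{W1pqloc included in H1sloc s<p} to place $u$ in $H^{1,s}_{loc}(\Omega)$, and then simply cites the classical positive-part lemma for ordinary Sobolev spaces (Evans, Exercise 20) to obtain the gradient formula; membership in $W^{1,(p,q)}(\Omega)$ then follows from the same pointwise domination you use. Your approach instead re-derives the gradient formula from scratch via the $f_\varepsilon$ approximation and the Chain Rule theorem already established in the paper. This is exactly the argument that underlies the cited Evans exercise, so in effect you have reproduced that classical proof inside the Sobolev--Lorentz framework rather than quoting it. The trade-off: the paper's proof is a three-line reduction to known literature, while yours is longer but self-contained within the paper's own results and does not require the external reference.
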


\begin{proof} Let $s \in (1,p)$ be fixed. From Theorem \ref{W1pqloc included in H1sloc s<p} we have that $u \in H^{1,s}_{loc}(\Omega).$ Via Evans \cite[p.\ 291-292, Exercise 20]{Eva} it follows that
$$ \nabla u^{+}=\left\{ \begin{array}{cl}
  \nabla u & \mbox{if $u>0$} \\
         0 & \mbox{if $u \le 0.$}
 \end{array}
\right.$$
But in that case $\nabla u^{+} \in L^{(p,q)}(\Omega; {\mathbf{R}}^n)$ since $\nabla u$ is in $L^{(p,q)}(\Omega; {\mathbf{R}}^n)$ and $|\nabla u^{+}(x)| \le |\nabla u(x)|$ for almost every $x$ in $\Omega.$ We also have $u^{+} \in L^{(p,q)}(\Omega)$ since $|u^{+}| \le |u|$ in $\Omega$ and $u \in L^{(p,q)}(\Omega).$
So we have in fact that $u^{+} \in W^{1,(p,q)}(\Omega).$ The claim is proved.

\end{proof}

From Theorem \ref{H=W revisited} and Lemma \ref{pos part revisited} it follows immediately that the space $H^{1,(p,q)}(\Omega)$ is closed under truncations from above by nonnegative numbers and
from below by negative numbers whenever $1<p<\infty$ and $1 \le q<\infty.$ Moreover, we have the following density result.

\begin{Theorem} \label{bdd fns in H1pq are dense in H1pq q finite}
{\rm(See Costea \cite[Theorem 3.4.5]{Cos3}).}
Suppose $1<p<\infty$ and $1\le q<\infty.$ Bounded functions in $H^{1,(p,q)}(\Omega)$ are dense in $H^{1,(p,q)}(\Omega).$
\end{Theorem}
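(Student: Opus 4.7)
The strategy is to approximate any $u \in H^{1,(p,q)}(\Omega)$ by its symmetric truncations
$$u_k := T_k(u), \qquad T_k(s) := \max(-k, \min(s, k)), \qquad k = 1, 2, \ldots,$$
each of which satisfies $|u_k| \le k$. The task then reduces to proving (a) $u_k \in H^{1,(p,q)}(\Omega)$ and (b) $u_k \to u$ in $H^{1,(p,q)}(\Omega)$.

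For (a) I would use the pointwise decomposition
$$u_k = u - (u-k)^+ + (-u-k)^+,$$
verified by a three-case check on the signs of $u-k$ and $u+k$. Since $1 \le q < \infty$, Theorem \ref{H=W revisited} identifies $H^{1,(p,q)}(\Omega)$ with $W^{1,(p,q)}(\Omega)$, so $u \in W^{1,(p,q)}(\Omega)$. The shifted functions $u - k$ and $-u - k$ are generally not in $L^{(p,q)}(\Omega)$ globally, but both lie in $H^{1,s}_{\mathrm{loc}}(\Omega)$ for every $s \in (1, p)$ by Theorem \ref{W1pqloc included in H1sloc s<p}, with distributional gradients $\pm \nabla u$. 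The local chain-rule argument underlying Lemma \ref{pos part revisited} thus still applies and produces
$$\nabla(u-k)^+ = \nabla u\,\chi_{\{u > k\}}, \qquad \nabla(-u-k)^+ = -\nabla u\,\chi_{\{u < -k\}}.$$
Since $(u-k)^+, (-u-k)^+ \le |u|$ pointwise and the two gradients above are dominated a.e.\ by $|\nabla u|$, both positive parts belong to $W^{1,(p,q)}(\Omega)$, and so does $u_k$. Invoking Theorem \ref{H=W revisited} once more, $u_k \in H^{1,(p,q)}(\Omega)$.

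For (b) the same decomposition gives $u - u_k = (u-k)^+ - (-u-k)^+$, with $\nabla(u - u_k) = \nabla u\,\chi_{E_k}$ and $|u - u_k| \le |u|\,\chi_{E_k}$, where $E_k := \{|u| > k\}$. Since $u$ is finite almost everywhere, $E_k \to \emptyset$ a.e.\ as $k \to \infty$. Because $1 \le q < \infty$, both $u$ and $\nabla u$ have absolutely continuous $(p,q)$-norm, so Definition \ref{defn of fns with absolutely continuous norm} yields
$$\|u\,\chi_{E_k}\|_{L^{(p,q)}(\Omega)} \to 0, \qquad \|\nabla u\,\chi_{E_k}\|_{L^{(p,q)}(\Omega; \mathbf{R}^n)} \to 0.$$
Combining these limits with the pointwise dominations above gives $\|u - u_k\|_{1,(p,q);\Omega} \to 0$, completing the proof.

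The main technical issue is (a): shifting $u$ by the constant $k$ typically knocks it out of $L^{(p,q)}(\Omega)$ when $\Omega$ has infinite measure, so Lemma \ref{pos part revisited} cannot be cited verbatim for $u - k$. The essential observation is that only local Sobolev regularity is needed to identify the distributional gradient of a positive part, while global membership of $(u-k)^+$ and $u_k$ in $W^{1,(p,q)}(\Omega)$ is recovered afterwards from the pointwise dominations by $u$ and $\nabla u$.
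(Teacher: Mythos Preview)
The paper does not give a proof of this theorem; it only cites Costea \cite[Theorem 3.4.5]{Cos3}. The one-sentence remark immediately preceding the statement (``From Theorem \ref{H=W revisited} and Lemma \ref{pos part revisited} it follows immediately that the space $H^{1,(p,q)}(\Omega)$ is closed under truncations from above by nonnegative numbers and from below by negative numbers\ldots'') makes clear that the intended argument is precisely yours: identify $H^{1,(p,q)}$ with $W^{1,(p,q)}$, truncate, and use the absolute continuity of the $(p,q)$-norm for $q<\infty$ to get convergence. Your proof is correct and matches this route; you are simply more careful than the paper's ``immediately'' in handling the infinite-measure case, where the shifted function $u-k$ need not lie globally in $L^{(p,q)}(\Omega)$ and one must invoke the local $H^{1,s}$ chain rule before recovering global $W^{1,(p,q)}$-membership of $(u-k)^+$ via the pointwise domination $(u-k)^+\le |u|$.
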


It is important to notice that the Sobolev-Lorentz space
$W^{1, (p,q)}(\Omega)$ is a lattice.

\begin{Theorem} \label{W lattice}
{\rm(See Costea \cite[Theorem V.13]{Cos0} and \cite[Theorem 3.4.6]{Cos3}).}
Suppose $1<p<\infty$ and $1\le q \le \infty.$ If $u$ and $v$ are in $W^{1, (p,q)}(\Omega),$
then $\max(u,v)$ and $\min(u,v)$ are in $W^{1, (p,q)}(\Omega)$ with

$$\nabla \max(u,v)(x)=\left\{ \begin{array}{cl}
  \nabla u(x) & \mbox{if $u(x) \ge v(x)$} \\
  \nabla v(x) & \mbox{if $v(x) \ge u(x)$}
 \end{array}
\right.$$

and
$$ \nabla \min(u,v)(x)=\left\{ \begin{array}{cl}
  \nabla u(x) & \mbox{if $u(x) \le v(x)$} \\
  \nabla v(x) & \mbox{if $v(x) \le u(x).$}
 \end{array}
\right.$$

In particular, $|u|=u^{+}-u^{-}$ belongs to $W^{1, (p,q)}(\Omega).$
\end{Theorem}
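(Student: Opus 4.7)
The plan is to reduce everything to the positive-part result (Lemma \ref{pos part revisited}) via the elementary identities
\begin{equation*}
\max(u,v) = u + (v-u)^{+}, \qquad \min(u,v) = u - (u-v)^{+}.
\end{equation*}
Since $W^{1,(p,q)}(\Omega)$ is a vector space (by definition and the fact that $L^{(p,q)}$ is a linear space), $v-u$ and $u-v$ lie in $W^{1,(p,q)}(\Omega)$ whenever $u,v$ do. Lemma \ref{pos part revisited} then gives $(v-u)^{+}, (u-v)^{+}\in W^{1,(p,q)}(\Omega)$, and so $\max(u,v), \min(u,v)\in W^{1,(p,q)}(\Omega)$ by linearity.

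For the gradient formulas, I would apply Lemma \ref{pos part revisited} directly. For $\max(u,v) = u + (v-u)^{+}$, the lemma gives $\nabla (v-u)^{+}=\nabla v-\nabla u$ on $\{v>u\}$ and $0$ on $\{v\le u\}$, so
\begin{equation*}
\nabla \max(u,v) = \begin{cases} \nabla v & \text{on } \{v>u\},\\ \nabla u & \text{on } \{v\le u\},\end{cases}
\end{equation*}
and analogously
\begin{equation*}
\nabla \min(u,v) = \nabla u - \nabla(u-v)^{+} = \begin{cases}\nabla v & \text{on } \{u>v\},\\ \nabla u & \text{on } \{u\le v\}.\end{cases}
\end{equation*}

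The only subtle point is that the statement of the theorem writes the formulas with overlapping conditions $\{u\ge v\}$ and $\{v\ge u\}$, so I need to know that $\nabla u = \nabla v$ almost everywhere on the set $\{u=v\}$ for consistency. This is the classical Sobolev-space fact that $\nabla w = 0$ a.e.\ on $\{w=0\}$ applied to $w=u-v$. Since Theorem \ref{W1pqloc included in H1sloc s<p} gives $W^{1,(p,q)}(\Omega)\subset H_{loc}^{1,s}(\Omega)$ for any $1<s<p$, this fact is available from the classical Sobolev setting (e.g.\ Evans \cite{Eva}, p.~291--292, Exercise 20, which was already invoked in the proof of Lemma \ref{pos part revisited}). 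Reconciling the two formulas on $\{u=v\}$ is the one step that requires attention; everything else is a direct algebraic manipulation.

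For the last claim, use $|u|=u^{+}-u^{-}=\max(u,0)-\min(u,0)$, or equivalently $|u|=\max(u,-u)$. Since $u\in W^{1,(p,q)}(\Omega)$ implies $-u, 0\in W^{1,(p,q)}(\Omega)$, both representations produce $|u|\in W^{1,(p,q)}(\Omega)$ by the part of the theorem just established.
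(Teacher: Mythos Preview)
Your proof is correct. Note, however, that the paper does not actually supply a proof of this theorem in the text: it is stated with a reference to \cite[Theorem V.13]{Cos0} and \cite[Theorem 3.4.6]{Cos3}, and no \texttt{proof} environment follows. Given the placement of Lemma \ref{pos part revisited} immediately before this theorem, your reduction via the identities $\max(u,v)=u+(v-u)^{+}$ and $\min(u,v)=u-(u-v)^{+}$ is exactly the intended route, and your handling of the overlap set $\{u=v\}$ (reducing to the classical fact that $\nabla w=0$ a.e.\ on $\{w=0\}$ via Theorem \ref{W1pqloc included in H1sloc s<p}) is the correct way to make the gradient formulas consistent as stated.
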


\begin{Lemma} \label{closed truncation}
{\rm(See Costea \cite[Lemma V.14]{Cos0} and \cite[Lemma 3.4.7]{Cos3}).}
Suppose $1<p<\infty$ and $1\le q<\infty.$ If
$u_j, v_j \in H^{1, (p,q)}(\Omega)$ are such
that $u_j \rightarrow u$ and $v_j \rightarrow v$ in
$H^{1,(p,q)}(\Omega),$ then $\min(u_j, v_j) \rightarrow \min(u,v)$
and similarly $\max(u_j, v_j) \rightarrow \max(u,v)$ in
$H^{1,(p,q)}(\Omega).$
\end{Lemma}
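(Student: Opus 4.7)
My plan is to reduce the problem to showing that if $w_j \to w$ in $H^{1,(p,q)}(\Omega)$, then $|w_j| \to |w|$ in $H^{1,(p,q)}(\Omega)$. Indeed, the identities
\begin{equation*}
\max(u,v) = \tfrac{1}{2}\bigl(u+v+|u-v|\bigr), \qquad \min(u,v) = \tfrac{1}{2}\bigl(u+v-|u-v|\bigr)
\end{equation*}
reduce the general claim to the absolute-value case, applied to $w_j = u_j - v_j$ and $w = u-v$; the linear combinations $u_j+v_j$ cause no issue.

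Once reduced, the $L^{(p,q)}$-convergence of $|w_j|$ to $|w|$ follows at once from the pointwise inequality $||w_j|-|w|| \le |w_j-w|$ and $w_j \to w$ in $L^{(p,q)}(\Omega).$ The work is in the gradients. By Theorem \ref{W lattice} (applied to $\max(w,0)$ and $-\min(w,0)$), we know $\nabla|w| = \operatorname{sgn}(w)\,\nabla w$ a.e., with the convention that $\operatorname{sgn}(w)=0$ on $\{w=0\}$ and recalling the standard fact that $\nabla w = 0$ a.e.\ on $\{w=0\}.$ Then I would write
\begin{equation*}
\nabla|w_j| - \nabla|w| = \operatorname{sgn}(w_j)\bigl(\nabla w_j - \nabla w\bigr) + \bigl(\operatorname{sgn}(w_j)-\operatorname{sgn}(w)\bigr)\nabla w.
\end{equation*}
The first term is dominated in absolute value by $|\nabla w_j - \nabla w|$, so its $L^{(p,q)}$-norm tends to $0.$

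For the second term I use a subsequence argument: it suffices to show that every subsequence of $\{w_j\}$ has a further subsequence along which $\|\nabla|w_{j_k}|-\nabla|w|\|_{L^{(p,q)}(\Omega;\mathbf{R}^n)} \to 0.$ Passing to such a subsequence, by Theorem \ref{W1pqloc included in H1sloc s<p} with some $s\in(1,p)$ we obtain convergence in $H^{1,s}_{loc}(\Omega)$, hence a further subsequence along which $w_j \to w$ a.e.\ in $\Omega.$ Consequently $\operatorname{sgn}(w_j) \to \operatorname{sgn}(w)$ a.e.\ on $\{w\ne 0\}$, while on $\{w=0\}$ the product with $\nabla w$ vanishes a.e.\ anyway. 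Thus the pointwise sequence $(\operatorname{sgn}(w_j)-\operatorname{sgn}(w))\nabla w$ converges to $0$ a.e.\ and is bounded pointwise by the single function $2|\nabla w| \in L^{(p,q)}(\Omega;\mathbf{R}^n).$ Since $1\le q<\infty$, $L^{(p,q)}(\Omega;\mathbf{R}^n)$ has absolutely continuous norm, so Proposition \ref{characterization of fns with absolutely continuous norm} (the Lorentz analogue of dominated convergence) yields convergence to $0$ in $L^{(p,q)}$-norm along the subsequence. The subsequence-of-subsequence principle then gives convergence for the whole sequence, which completes the proof.

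The main obstacle is the second term in the decomposition: one cannot extract a common dominating function for each $j$ separately, which is precisely what forces the passage to an a.e.-convergent subsequence and the essential use of the hypothesis $q<\infty$ (equivalently, the absolute continuity of the $(p,q)$-norm) via Proposition \ref{characterization of fns with absolutely continuous norm}. Everything else is bookkeeping around the chain rule / lattice structure already established.
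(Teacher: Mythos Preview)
The paper does not actually prove this lemma; it only records the statement and cites the author's earlier works \cite[Lemma V.14]{Cos0} and \cite[Lemma 3.4.7]{Cos3}, so there is no in-paper proof to compare against.

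Your argument is correct and is the standard one. The reduction to $|w_j|\to|w|$ via $\max(u,v)=\tfrac12(u+v+|u-v|)$ is fine, the decomposition
\[
\nabla|w_j|-\nabla|w|=\operatorname{sgn}(w_j)(\nabla w_j-\nabla w)+(\operatorname{sgn}(w_j)-\operatorname{sgn}(w))\nabla w
\]
is the right one, and the treatment of the second term via the subsequence principle together with Proposition~\ref{characterization of fns with absolutely continuous norm} is exactly where the hypothesis $q<\infty$ enters. The only minor simplification: you need not route through Theorem~\ref{W1pqloc included in H1sloc s<p} to obtain an a.e.\ convergent subsequence; convergence $w_j\to w$ in $L^{(p,q)}(\Omega)$ already yields such a subsequence directly (e.g.\ via Corollary~\ref{Coro Holder for Lorentz} and $L^{p-\varepsilon}_{loc}$ convergence, or by general Banach function space theory). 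Apart from this cosmetic point, nothing is missing.
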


We recall next that the space $H_{0}^{1, (p,q)}(\Omega)$ is also a
lattice whenever $1<p<\infty$ and $1\le q \le \infty.$

\begin{Theorem} \label{H 0 lattice}
{\rm(See Costea \cite[Theorem V.15]{Cos0} and \cite[Theorem 3.4.8]{Cos3}).}
Suppose $1<p<\infty$ and $1\le q\le \infty.$ If $u$ and $v$ are in
$H_{0}^{1, (p,q)}(\Omega),$ then $\max(u,v)$ and $\min(u,v)$ are in
$H_{0}^{1, (p,q)}(\Omega).$ Moreover, if $u \in H_{0}^{1,
(p,q)}(\Omega)$ is nonnegative, then there exists a sequence of
nonnegative functions $\varphi_j \in C_{0}^{\infty}(\Omega)$ that
converges to $u$ in $H_{0}^{1,(p,q)}(\Omega).$
\end{Theorem}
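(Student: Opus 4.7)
The plan is to deduce everything from a single lemma: if $w\in H_{0}^{1,(p,q)}(\Omega)$, then $w^{+}\in H_{0}^{1,(p,q)}(\Omega)$. Granting this, the identities
\[
\max(u,v)=v+(u-v)^{+}, \qquad \min(u,v)=v-(v-u)^{+}
\]
together with the fact that $H_{0}^{1,(p,q)}(\Omega)$ is a vector space give the lattice assertion at once.

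To establish the lemma, I would pick a family $\{f_{\varepsilon}\}_{\varepsilon>0}\subset C^{\infty}(\mathbf{R})$ with $f_{\varepsilon}(0)=0$, $0\le f_{\varepsilon}'\le 1$, $f_{\varepsilon}(t)=0$ for $t\le 0$, and $f_{\varepsilon}(t)=t-\varepsilon$ for $t\ge 2\varepsilon$; these satisfy $|f_{\varepsilon}(t)|\le |t|$, $f_{\varepsilon}(t)\to t^{+}$ and $f_{\varepsilon}'(t)\to \chi_{(0,\infty)}(t)$ pointwise on $\mathbf{R}\setminus\{0\}$. For $w\in H_{0}^{1,(p,q)}(\Omega)$, Theorem \ref{Chain Rule revisited} yields $f_{\varepsilon}\circ w\in H_{0}^{1,(p,q)}(\Omega)$ with $\nabla(f_{\varepsilon}\circ w)=f_{\varepsilon}'(w)\,\nabla w$. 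The crucial point (this is the step I expect to be the main obstacle, since it is not automatic when $q=\infty$) is that $w$ and $\nabla w$ automatically have absolutely continuous $(p,q)$-norm: indeed, $H_{0}^{1,(p,q)}(\Omega)$ is by definition the closure of $C_{0}^{\infty}(\Omega)$, the components of $C_{0}^{\infty}$-functions lie in the absolutely continuous norm subspace $X_{a}=X_{b}\subset L^{(p,q)}$ (they are bounded with bounded support, hence approximable in $L^{(p,q)}$ by simple functions), and $X_{a}$ is closed. With this in hand, Proposition \ref{characterization of fns with absolutely continuous norm} applied with dominating functions $|w|$ and $|\nabla w|$ gives
\[
f_{\varepsilon}\circ w\to w^{+}\quad\text{and}\quad f_{\varepsilon}'(w)\,\nabla w\to \chi_{\{w>0\}}\nabla w=\nabla w^{+}
\]
in $L^{(p,q)}$. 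Hence $f_{\varepsilon}\circ w\to w^{+}$ in $H^{1,(p,q)}(\Omega)$, and since $H_{0}^{1,(p,q)}(\Omega)$ is closed in $H^{1,(p,q)}(\Omega)$, we conclude $w^{+}\in H_{0}^{1,(p,q)}(\Omega)$.

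For the final clause, suppose $u\ge 0$ lies in $H_{0}^{1,(p,q)}(\Omega)$, so $u^{+}=u$. Pick $\varphi_{j}\in C_{0}^{\infty}(\Omega)$ with $\varphi_{j}\to u$ in $H^{1,(p,q)}(\Omega)$, and, passing to a subsequence, assume $\varphi_{j}\to u$ pointwise a.e. Because $f_{\varepsilon}\in C^{\infty}(\mathbf{R})$ with $f_{\varepsilon}(0)=0$, the functions $\psi_{j,\varepsilon}:=f_{\varepsilon}\circ\varphi_{j}$ are nonnegative elements of $C_{0}^{\infty}(\Omega)$. I would finish by a diagonal argument: for each integer $k\ge 1$ first choose $\varepsilon_{k}>0$ so small that $\|f_{\varepsilon_{k}}\circ u-u\|_{H^{1,(p,q)}(\Omega)}<1/(2k)$ (this is the content of the previous paragraph, with $w=u$), and then choose $j_{k}$ so large that $\|\psi_{j_{k},\varepsilon_{k}}-f_{\varepsilon_{k}}\circ u\|_{H^{1,(p,q)}(\Omega)}<1/(2k)$. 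The $L^{(p,q)}$ bound for the latter is immediate from $|f_{\varepsilon_{k}}|\le 1$-Lipschitz; for the gradient one writes
\[
f_{\varepsilon_{k}}'(\varphi_{j})\,\nabla\varphi_{j}-f_{\varepsilon_{k}}'(u)\,\nabla u=f_{\varepsilon_{k}}'(\varphi_{j})(\nabla\varphi_{j}-\nabla u)+\bigl(f_{\varepsilon_{k}}'(\varphi_{j})-f_{\varepsilon_{k}}'(u)\bigr)\nabla u,
\]
the first term tending to $0$ in $L^{(p,q)}$ since $|f_{\varepsilon_{k}}'|\le 1$, and the second tending to $0$ by Proposition \ref{characterization of fns with absolutely continuous norm} (continuity of $f_{\varepsilon_{k}}'$, pointwise a.e.\ convergence $\varphi_{j}\to u$, and absolute continuity of the $(p,q)$-norm of $\nabla u$). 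The diagonal sequence $\psi_{j_{k},\varepsilon_{k}}$ is then a sequence of nonnegative $C_{0}^{\infty}(\Omega)$ functions converging to $u$ in $H^{1,(p,q)}(\Omega)$, completing the proof.
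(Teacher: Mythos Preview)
The paper does not supply a proof of this theorem; it is stated with references to \cite{Cos0} and \cite{Cos3} and no argument follows in the text, so there is nothing to compare against line by line. Your proof is correct and is assembled entirely from tools the paper does prove or state: Theorem~\ref{Chain Rule revisited} (whose $H_0^{1,(p,q)}$ clause is proved in full in the paper and is the engine of your argument), Lemma~\ref{pos part revisited} for the identification $\nabla w^{+}=\chi_{\{w>0\}}\nabla w$, and Proposition~\ref{characterization of fns with absolutely continuous norm} for the dominated-convergence step. The observation that members of $H_0^{1,(p,q)}(\Omega)$ automatically have absolutely continuous $(p,q)$-norm (even when $q=\infty$) is exactly what the paper records just before Lemma~\ref{zero on bdry Omega implies membership in H0}, so your ``main obstacle'' is already handled in the text. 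The reduction of the lattice property to $w\mapsto w^{+}$ via $\max(u,v)=v+(u-v)^{+}$ is clean, and your diagonal argument for the nonnegative approximants mirrors precisely the gradient-splitting argument used in the paper's proof of Theorem~\ref{Chain Rule revisited}. One cosmetic point: your $f_\varepsilon$ in fact satisfies $f_\varepsilon'(t)\to\chi_{(0,\infty)}(t)$ for every $t\in\mathbf{R}$ (including $t=0$, since $f_\varepsilon'(0)=0$), so the exclusion of $t=0$ is unnecessary, though harmless.
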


We have a result analogous to Theorem \ref{bdd fns in H1pq are dense in H1pq q finite} for $H_{0}^{1,(p,q)}(\Omega)$ whenever $1<p<\infty$ and $1 \le q \le \infty.$

\begin{Theorem} \label{bdd fns in H01pq are dense in H01pq 1 le q le infty}
{\rm(See Costea \cite[Theorem 3.4.9]{Cos3}).}
Suppose $1<p<\infty$ and $1 \le q \le \infty.$ Bounded functions in $H_{0}^{1,(p,q)}(\Omega)$ are dense in $H_{0}^{1,(p,q)}(\Omega).$
\end{Theorem}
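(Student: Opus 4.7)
The proof is essentially immediate from the definition of $H_{0}^{1,(p,q)}(\Omega)$, and requires no truncation or chain rule machinery. The plan is to observe that $C_{0}^{\infty}(\Omega)$ is already a dense subset of $H_{0}^{1,(p,q)}(\Omega)$ consisting entirely of bounded functions.

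More precisely, I would proceed as follows. Fix $u \in H_{0}^{1,(p,q)}(\Omega)$ and $\varepsilon>0$. By the definition of $H_{0}^{1,(p,q)}(\Omega)$ as the closure of $C_{0}^{\infty}(\Omega)$ inside $H^{1,(p,q)}(\Omega)$, there exists $\varphi \in C_{0}^{\infty}(\Omega)$ such that
\begin{equation*}
\|u-\varphi\|_{1,(p,q);\Omega}<\varepsilon.
\end{equation*}
Since $\varphi$ is continuous and has compact support in $\Omega$, it is bounded on $\Omega$, and by construction $\varphi \in C_{0}^{\infty}(\Omega) \subset H_{0}^{1,(p,q)}(\Omega)$. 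Thus $\varphi$ is a bounded function in $H_{0}^{1,(p,q)}(\Omega)$ that approximates $u$ to within $\varepsilon$ in the $H^{1,(p,q)}$-norm.

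Because this argument uses only the definition of $H_{0}^{1,(p,q)}(\Omega)$, it works uniformly for every $q \in [1,\infty]$, unlike Theorem \ref{bdd fns in H1pq are dense in H1pq q finite} for the space $H^{1,(p,q)}(\Omega)$, which is defined as the completion of $\{\phi \in C^{\infty}(\Omega): \|\phi\|_{1,(p,q);\Omega}<\infty\}$ whose elements need not be bounded (one really does need truncation and the absolute continuity of the $(p,q)$-norm in that setting). Here, by contrast, the test class $C_{0}^{\infty}(\Omega)$ already sits inside the bounded functions, so there is no obstacle at $q=\infty$ and nothing further to verify.
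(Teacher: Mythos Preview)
Your argument is correct: by definition $H_{0}^{1,(p,q)}(\Omega)$ is the closure of $C_{0}^{\infty}(\Omega)$ in $H^{1,(p,q)}(\Omega)$, and every $\varphi\in C_{0}^{\infty}(\Omega)$ is bounded, so the bounded elements of $H_{0}^{1,(p,q)}(\Omega)$ contain a dense subset and are therefore themselves dense. Nothing more is needed, and your remark about why the case $q=\infty$ causes no difficulty here (in contrast with $H^{1,(p,q)}(\Omega)$) is on point.

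As for comparison with the paper: the paper does not actually supply a proof of this theorem, it only records the statement and refers to \cite[Theorem 3.4.9]{Cos3}. The placement of the result right after the chain-rule and lattice machinery (Theorems \ref{Chain Rule revisited}, \ref{W lattice}, \ref{H 0 lattice}) suggests that the cited proof may proceed via truncation, as in Theorem \ref{bdd fns in H1pq are dense in H1pq q finite}, but your one-line observation bypasses all of that and is the most economical route.
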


It is easy to see that if a function $u$ is in $H_{0}^{1,(p,q)}(\Omega),$ then $u$ and its distributional gradient $\nabla u$ must have absolutely continuous $(p,q)$-norm. Next we give a sufficient condition for membership in $H_{0}^{1,(p,q)}(\Omega).$

\begin{Lemma} \label{zero on bdry Omega implies membership in H0}
Let $\Omega \subset {\mathbf{R}}^n$ be a bounded open set, where $n \ge 1$ is an integer. Suppose $1<p<\infty$ and $1 \le q \le \infty.$ Suppose that $u$ is a function in $W^{1, (p,q)}(\Omega)$ such that $\lim_{x \rightarrow y} u(x)=0$ for all $y \in \partial \Omega.$ If $\nabla u$ has absolutely continuous $(p,q)$-norm, then $u \in H_{0}^{1,(p,q)}(\Omega).$
\end{Lemma}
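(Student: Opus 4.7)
The plan is to approximate $u$ by the truncations $u_\varepsilon := \operatorname{sgn}(u)(|u|-\varepsilon)^+$, to show that each $u_\varepsilon$ lies in $H_0^{1,(p,q)}(\Omega)$, and then to pass to the limit $\varepsilon \to 0$. By the lattice identities (Theorem \ref{W lattice}) one has $u_\varepsilon \in W^{1,(p,q)}(\Omega)$ with $\nabla u_\varepsilon = \nabla u \cdot \chi_{\{|u|>\varepsilon\}}$; the hypothesis $\lim_{x \to y} u(x) = 0$ for every $y \in \partial \Omega$ forces $\{|u| \ge \varepsilon\}$ to stay a positive distance from $\partial \Omega$, so $\operatorname{supp} u_\varepsilon$ is a compact subset of $\Omega$.

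Convergence $u_\varepsilon \to u$ in $W^{1,(p,q)}(\Omega)$ follows easily. The elementary bound $|u - u_\varepsilon| = \min(|u|,\varepsilon) \le \varepsilon$ combined with $\|\chi_\Omega\|_{L^{(p,q)}(\Omega)} < \infty$ (using boundedness of $\Omega$) gives $\|u - u_\varepsilon\|_{L^{(p,q)}} \to 0$. For the gradient, $\nabla u - \nabla u_\varepsilon = \nabla u \cdot \chi_{\{|u| \le \varepsilon\}}$ is dominated by $|\nabla u|$ and converges pointwise a.e.\ to $\nabla u \cdot \chi_{\{u=0\}} = 0$; Proposition \ref{characterization of fns with absolutely continuous norm} applied to $\nabla u$, which has absolutely continuous $(p,q)$-norm by hypothesis, delivers $\|\nabla u - \nabla u_\varepsilon\|_{L^{(p,q)}(\Omega;\mathbf{R}^n)} \to 0$.

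The crux is placing $u_\varepsilon$ in $H_0^{1,(p,q)}(\Omega)$ for each fixed $\varepsilon$. Picking $\varphi \in C_0^\infty(\Omega)$ with $\varphi \equiv 1$ on $\operatorname{supp} u_\varepsilon$ gives $u_\varepsilon = u_\varepsilon \varphi$, and the Product Rule (Lemma \ref{Product Rule for W1pq}) yields $u_\varepsilon \varphi \in H_0^{1,(p,q)}(\Omega)$ provided both $u_\varepsilon$ and $\nabla u_\varepsilon$ carry absolutely continuous $(p,q)$-norm. For $\nabla u_\varepsilon$ this is immediate from $|\nabla u_\varepsilon| \le |\nabla u|$; for $u_\varepsilon$ the case $1 \le q < \infty$ is automatic (every element of $L^{(p,q)}$ has absolutely continuous norm), so the only real obstacle is $q = \infty$.

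To dispatch the $q=\infty$ case I exploit the compact support of $u_\varepsilon$. Theorem \ref{W1pqloc included in H1sloc s<p}(i) places $u_\varepsilon \in H^{1,s}(\Omega)$ for every $1 < s < p$, and extension by zero puts it in $W^{1,s}(\mathbf{R}^n)$ with compact support. Classical Sobolev embedding (Morrey when $s > n$, available whenever $p > n$ or $n = 1$; the Gagliardo--Nirenberg--Sobolev embedding into $L^{s^*}$ with $s^* = ns/(n-s) > p$ obtained by choosing $s$ close enough to $p$ when $1 < p \le n$) places $u_\varepsilon \in L^r(\Omega)$ for some $r > p$. The classical H\"{o}lder inequality together with the continuous inclusion $L^p(\Omega) \subset L^{(p,\infty)}(\Omega)$ then yields
\[
\|u_\varepsilon \chi_{E_k}\|_{L^{(p,\infty)}(\Omega)} \le C \|u_\varepsilon \chi_{E_k}\|_{L^p(\Omega)} \le C \|u_\varepsilon\|_{L^r(\Omega)} \, |E_k|^{1/p - 1/r} \longrightarrow 0
\]
whenever $E_k \to \emptyset$ a.e.\ in $\Omega$, confirming absolute continuity of the $(p,\infty)$-norm of $u_\varepsilon$. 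The Product Rule Lemma now places each $u_\varepsilon$ in $H_0^{1,(p,q)}(\Omega)$, and closedness of $H_0^{1,(p,q)}(\Omega)$ inside $W^{1,(p,q)}(\Omega)$ combined with the convergence $u_\varepsilon \to u$ established above forces $u \in H_0^{1,(p,q)}(\Omega)$.
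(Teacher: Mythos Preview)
Your proof is correct and follows the same overall strategy as the paper: truncate $u$, use Lemma \ref{Product Rule for W1pq} to place the compactly supported truncations in $H_0^{1,(p,q)}(\Omega)$, and pass to the limit using the absolute continuity of $\|\nabla u\|_{(p,q)}$. Your organization is slightly tidier than the paper's: the two-sided truncation $\operatorname{sgn}(u)(|u|-\varepsilon)^+$ avoids the paper's preliminary reduction to $u \ge 0$, and you verify absolute continuity of the $(p,q)$-norm only for the compactly supported $u_\varepsilon$ (via Sobolev embedding into some $L^r$, $r>p$) rather than first proving it for $u$ itself, which in the paper required a forward reference to Theorem \ref{Holder 1/p' continuity for u in W1pq n equal 1} when $n=1$ and an appeal to Heinonen--Kilpel\"{a}inen--Martio when $n \ge 2$.
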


\begin{proof} We first show that $u$ has absolutely continuous $(p,q)$-norm if $u$ satisfies the hypotheses of this lemma, a fact which is trivial when $1 \le q<\infty.$ We have to consider the cases $n=1$ and $n \ge 2$ separately.

\vskip 2mm

Case I. We assume first that $n=1.$ Then via Theorem \ref{Holder 1/p' continuity for u in W1pq n equal 1} it follows that $u$ has a version $\overline{u}$ that is H\"{o}lder continuous in $\overline{\Omega}$ with exponent $1-\frac{1}{p}.$ Without loss of generality we can assume that $u$ is H\"{o}lder continuous in $\overline{\Omega}$ with exponent $1-\frac{1}{p}.$ Thus, it follows that $u$ has absolutely continuous $(p,q)$-norm on $\Omega$ if $n=1$ because $u$ is continuous on the bounded set $\overline{\Omega}.$

\vskip 2mm

Case II. We assume now that $n \ge 2.$ Let $s$ be chosen in $(1,p).$ Since $\Omega$ is bounded, it follows via Theorem \ref{W1pqloc included in H1sloc s<p} that $u$ is in $H^{1,s}(\Omega).$ Thus, it follows via Heinonen-Kilpel\"{a}inen-Martio \cite[Lemma 1.26]{HKM} that $u \in H_{0}^{1,s}(\Omega).$
If in addition, $s$ is chosen such that $p<\frac{ns}{n-s},$ then we have via Sobolev's embedding theorem for $H_{0}^{1,s}(\Omega)$ (see Gilbarg-Trudinger \cite[Theorem 7.10]{GT}) that in fact $u \in L^{\frac{ns}{n-s}}(\Omega).$ This, (\ref{relation between Lpr and Lps norm}), Theorem \ref{Holder for Lorentz} and
Bennett-Sharpley \cite[Proposition IV.4.2 and Lemma IV.4.5]{BS} show that actually $u$ has absolutely continuous $(p,q)$-norm for $n \ge 2$ if it satisfies the hypotheses of the lemma.

Since we now know that $u$ and $\nabla u$ have absolutely continuous $(p,q)$-norm whenever $u$ satisfies the hypotheses of the lemma, it follows via Theorem \ref{H=W revisited} that $u$ is in fact in $H^{1,(p,q)}(\Omega).$

By recalling that $u=u^{+}+u^{-},$ it follows immediately via Lemma \ref{pos part revisited} that both $\nabla u^{+}$ and $\nabla u^{-}$ have absolutely
continuous $p,q$-norm since $\nabla u$ has absolutely continuous $p,q$-norm and since $|\nabla u^{+}|, |\nabla u^{-}| \le |\nabla u|$ a.e. in $\Omega.$

We also notice that both $u^{+}$ and $u^{-}$ have absolutely continuous $(p,q)$-norm since $|u^{+}|, |u^{-}| \le |u|$ and since $u$ has absolutely continuous $(p,q)$-norm. Moreover, $\lim_{x \rightarrow y} u^{+}(x)=\lim_{x \rightarrow y} u^{-}(x)=0$ for all $y \in \partial \Omega$ since
$\lim_{x \rightarrow y} u(x)=0$ for all $y \in \partial \Omega.$ Hence, $u^{+}$ and $u^{-}$ satisfy the hypotheses of the lemma if $u$ does, which implies via Theorem \ref{H=W revisited} that $u^{+}$ and $u^{-}$ are in fact in $H^{1,(p,q)}(\Omega).$ Thus, it is enough to prove the claim of the lemma for $u^{+}$ and $u^{-}.$ This means that we can assume without loss of generality that $u \ge 0.$

Fix $\varepsilon>0.$ Let $u_{\varepsilon}=(u-\varepsilon)^{+}=\max(u-\varepsilon, 0).$ Then $u_{\varepsilon}$ has compact support in $\Omega.$
Moreover, via Theorem \ref{W lattice}, we see that $u_{\varepsilon} \in W^{1,(p,q)}(\Omega)$ and
$$ \nabla u_{\varepsilon}=\left\{ \begin{array}{cl}
  \nabla u & \mbox{if $u>\varepsilon$} \\
         0 & \mbox{if $0 \le u \le \varepsilon.$}
 \end{array}
\right.$$

We now show that $u_{\varepsilon}$ is in $H_{0}^{1,(p,q)}(\Omega).$ The function $u_{\varepsilon}$ has absolutely continuous $(p,q)$-norm since $0\le  u_{\varepsilon} \le u$ pointwise in $\Omega$ and since $u$ has absolutely continuous $(p,q)$-norm. Similarly, $\nabla u_{\varepsilon}$ has absolutely continuous $(p,q)$-norm since $0 \le  |\nabla u_{\varepsilon}| \le |\nabla u|$ almost everywhere in $\Omega$ and since $\nabla u$ has absolutely continuous $(p,q)$-norm.
These two facts plus the membership of $u_{\varepsilon}$ in $W^{1,(p,q)}(\Omega)$ yield the membership of $u_{\varepsilon}$ in $H_{0}^{1,(p,q)}(\Omega)$ via Lemma \ref{Product Rule for W1pq}.

We now show that $u_{\varepsilon}$ converges to $u$ in $W^{1,(p,q)}(\Omega).$ Indeed, we see that $0 \le u-u_{\varepsilon} \le \varepsilon$ pointwise on the bounded set $\Omega,$ which implies
$$\lim_{\varepsilon \rightarrow 0} ||u_{\varepsilon}-u||_{L^{(p,q)}(\Omega)}=0.$$

We also see via Theorem \ref{W lattice} and the definition of $u_{\varepsilon}$ that
$$\nabla u-\nabla u_{\varepsilon}=\nabla u \chi_{0<u\le \varepsilon} \mbox{ a.e. in } \Omega.$$
This and the absolute continuity of the $(p,q)$-norm of $\nabla u$ yield
$$\lim_{\varepsilon \rightarrow 0} ||\nabla u_{\varepsilon}-\nabla u||_{L^{(p,q)}(\Omega; {\mathbf{R}^n})}=\lim_{\varepsilon \rightarrow 0} ||\nabla u \chi_{0 < u \le \varepsilon}||_{L^{(p,q)}(\Omega; {\mathbf{R}^n})}=0.$$
This finishes the proof.

\end{proof}

\section{H\"{o}lder continuity of functions in Sobolev-Lorentz spaces} \label{Section Morrey embedding theorems}

In this section we extend some of the known classical embedding theorems to the spaces $H_{0}^{1,(p,q)}(\Omega),$ $C_{0}(\Omega) \cap W^{1,(p,q)}(\Omega)$ and $W_{loc}^{1,(p,q)}(\Omega)$ for $1 \le n<p<\infty$ and $1 \le q \le \infty,$ where $\Omega \subset {\mathbf{R}}^n$ is open. First we recall the definition of H\"{o}lder continuous functions with exponent $0<\alpha<1.$

\begin{Definition} {\rm(See Gilbarg-Trudinger \cite[p.\ 52-53]{GT} and Ziemer \cite[p.\ 2-3]{Zie}).}
Let $n \ge 1$ and $0<\alpha<1.$ Let $u$ be a function defined on a set $D \subset {\mathbf{R}}^n.$ We say that $u$ is \textit{H\"{o}lder continuous in $D$ with exponent $\alpha$} if the quantity
$$[u]_{0, \alpha; D}:=\sup_{x, y \in D, x \neq y} \frac{|u(x)-u(y)|}{|x-y|^{\alpha}}$$
is finite. We say that $u$ is \textit{locally H\"{o}lder continuous in $D$ with exponent $\alpha$}
if $u$ is H\"{o}lder continuous with exponent $\alpha$ on compact subsets of $D.$

Let $\Omega \subset {\mathbf{R}}^n$ be an open set. Let $u$ be a continuous function on $\overline{\Omega}.$ We say that $u$ is in $C^{0, \alpha}(\overline{\Omega})$ if $u$ is H\"{o}lder
continuous in $\Omega$ with exponent $\alpha.$

\end{Definition}

Before we state and prove these embedding results, we need to prove an extension result for functions in $H_{0}^{1,(p,q)}(\Omega).$

\begin{Proposition} \label{Extension by zero in H01pq}
Let $\Omega \subset \widetilde{\Omega}$ be two open sets in ${\mathbf{R}}^n,$ where $n \ge 1$ is an integer. Suppose $1<p<\infty$ and $1\le q \le \infty.$ Let $u$ be a function in $H_{0}^{1,(p,q)}(\Omega)$ and let $\widetilde{u}$ be the extension of $u$ by zero to $\widetilde{\Omega}.$ Then $\widetilde{u} \in H_{0}^{1,(p,q)}(\widetilde{\Omega}).$

\end{Proposition}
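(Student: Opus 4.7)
The plan is to argue directly from the definition of $H_{0}^{1,(p,q)}$ as the closure of test functions. By definition, there exists a sequence $\varphi_j \in C_{0}^{\infty}(\Omega)$ with $\varphi_j \to u$ in $H^{1,(p,q)}(\Omega)$, i.e. $\varphi_j \to u$ in $L^{(p,q)}(\Omega)$ and $\nabla \varphi_j \to \nabla u$ in $L^{(p,q)}(\Omega; \mathbf{R}^n)$. Since each $\varphi_j$ is compactly supported in $\Omega$, its extension by zero $\widetilde{\varphi_j}$ lies in $C_{0}^{\infty}(\widetilde{\Omega})$, with classical gradient equal to the extension by zero of $\nabla \varphi_j$. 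The goal is then to show $\widetilde{\varphi_j} \to \widetilde{u}$ in $H^{1,(p,q)}(\widetilde{\Omega})$, which by definition places $\widetilde{u}$ in $H_{0}^{1,(p,q)}(\widetilde{\Omega})$.

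The key observation, and essentially the only nontrivial point, is that the $L^{(p,q)}$-norm is preserved under extension by zero. Indeed, if $f: \Omega \to \mathbf{R}$ is measurable and $\widetilde{f}: \widetilde{\Omega} \to \mathbf{R}$ is its extension by zero, then for every $t \ge 0$ one has
\begin{equation*}
\lambda_{[\widetilde{f}]}(t) = |\{x \in \widetilde{\Omega}: |\widetilde{f}(x)|>t\}| = |\{x \in \Omega: |f(x)|>t\}| = \lambda_{[f]}(t),
\end{equation*}
so $f$ and $\widetilde{f}$ share the same distribution function, hence the same nonincreasing rearrangement $f^{*} = \widetilde{f}^{*}$ and the same maximal function $f^{**} = \widetilde{f}^{**}$. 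From the very definition of $\|\cdot\|_{L^{(p,q)}}$ in terms of $f^{**}$, this yields
\begin{equation*}
\|f\|_{L^{(p,q)}(\Omega)} = \|\widetilde{f}\|_{L^{(p,q)}(\widetilde{\Omega})},
\end{equation*}
and analogously for the vector-valued norm on $\mathbf{R}^n$-valued functions.

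Applying this identity to the differences $\varphi_j - \varphi_k$ and to $\nabla\varphi_j - \nabla\varphi_k$, we see that $(\widetilde{\varphi_j})_{j\ge 1}$ is a Cauchy sequence in $H^{1,(p,q)}(\widetilde{\Omega})$ whenever $(\varphi_j)_{j\ge 1}$ is Cauchy in $H^{1,(p,q)}(\Omega)$. Applying it again to $\varphi_j - u$ and to $\nabla\varphi_j - \nabla u$ gives
\begin{equation*}
\|\widetilde{\varphi_j} - \widetilde{u}\|_{L^{(p,q)}(\widetilde{\Omega})} = \|\varphi_j - u\|_{L^{(p,q)}(\Omega)} \longrightarrow 0,
\end{equation*}
and likewise the gradient term tends to zero, where $\nabla \widetilde{u}$ is understood as the extension of $\nabla u$ by zero (a legitimate element of $L^{(p,q)}(\widetilde{\Omega}; \mathbf{R}^n)$ by the same norm-preservation identity). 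Hence $\widetilde{\varphi_j} \to \widetilde{u}$ in $H^{1,(p,q)}(\widetilde{\Omega})$, and since $\widetilde{\varphi_j} \in C_{0}^{\infty}(\widetilde{\Omega})$, this exhibits $\widetilde{u}$ as a limit of test functions in $\widetilde{\Omega}$, giving $\widetilde{u} \in H_{0}^{1,(p,q)}(\widetilde{\Omega})$.

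The only subtle point is verifying that the object we are calling $\nabla \widetilde{u}$ really is the distributional gradient of $\widetilde{u}$ on $\widetilde{\Omega}$ (so that the limit we constructed is the correct $H^{1,(p,q)}(\widetilde{\Omega})$-element rather than merely a pair in $L^{(p,q)}(\widetilde{\Omega}) \times L^{(p,q)}(\widetilde{\Omega}; \mathbf{R}^n)$). This is immediate via Theorem \ref{H included in W}: the limit $\widetilde{u}$ of $\widetilde{\varphi_j}$ in $H^{1,(p,q)}(\widetilde{\Omega})$ has a distributional gradient that coincides with the $L^{(p,q)}$-limit of $\nabla \widetilde{\varphi_j}$, which by the extension-by-zero identity equals the extension by zero of $\nabla u$, as required. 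This completes the argument, and no essential obstacle arises.
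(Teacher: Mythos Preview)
Your proof is correct and follows essentially the same approach as the paper: approximate $u$ by $C_0^\infty(\Omega)$ functions, extend these by zero to $\widetilde{\Omega}$, and use the fact that extension by zero preserves the Lorentz norm (via equality of distribution functions) to conclude. The paper identifies the limit by additionally passing to a subsequence with pointwise a.e.\ convergence, whereas you identify it directly by applying the norm-preservation identity to $\varphi_j-u$ and $\nabla\varphi_j-\nabla u$; both arguments are valid.
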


\begin{proof} Let $u_k, k \ge 1$ be a sequence of functions in $C_{0}^{\infty}(\Omega)$ such that $u_k$ converges to $u$ in $H_{0}^{1,(p,q)}(\Omega).$ By passing to a subsequence if necessary, we can assume without loss of generality that $u_k$ converges to $u$ pointwise almost everywhere in $\Omega$ and that $\nabla u_k$ converges to $\nabla u$ pointwise almost everywhere in $\Omega.$
For every $k \ge 1$ let $\widetilde{u}_k$ be the extension of $u_k$ by $0$ to $\widetilde{\Omega}.$
Then $\widetilde{u}_k \in C_{0}^{\infty}(\widetilde{\Omega})$ for all $k \ge 1$ and
$$||\widetilde{u}_{k}-\widetilde{u}_{l}||_{H_{0}^{1,(p,q)}(\widetilde{\Omega})}=
||u_{k}-u_{l}||_{H_{0}^{1,(p,q)}(\Omega)}$$
for all $l, k \ge 1.$ Hence the sequence $(\widetilde{u}_{k})_{k \ge 1}$ is fundamental in $H_{0}^{1,(p,q)}(\widetilde{\Omega})$
since the sequence $(u_{k})_{k \ge 1}$ is fundamental in $H_{0}^{1,(p,q)}(\Omega).$ Thus, the sequence $(\widetilde{u}_{k})_{k \ge 1}$ converges in $H_{0}^{1,(p,q)}(\widetilde{\Omega})$ to some function $v \in H_{0}^{1,(p,q)}(\widetilde{\Omega}).$ Since
$\widetilde{u}_k$ converges to $\widetilde{u}$ in $L^{p,q}(\widetilde{\Omega})$ and pointwise almost everywhere in $\widetilde{\Omega},$ it follows in fact that $\widetilde{u}=v$ almost everywhere in $\widetilde{\Omega}.$ Thus, $\widetilde{u} \in H_{0}^{1,(p,q)}(\widetilde{\Omega}).$ This finishes the proof.

\end{proof}

We prove later that if $n=1$ and $\Omega \subset {\mathbf{R}}$ is an open interval, then all the functions in $H_{0}^{1,(p,q)}(\Omega)$ and in $W^{1,(p,q)}(\Omega)$ have representatives that are H\"{o}lder continuous in $\overline{\Omega}$ with exponent $1-\frac{1}{p}.$ The following result is the first step in this direction.

\begin{Proposition}
\label{Holder 1/p' continuity for u in C1R n equal 1}
Suppose $n=1<p<\infty$ and $1 \le q \le \infty$. Let $\Omega \subset {\mathbf{R}}$ be an open interval. If $u \in C^{1}(\Omega)$ and $x, y \in \Omega$ with $x<y,$ then
\begin{equation}
\label{Holder 1/p' continuity estimate for u in C1R n equal 1}
|u(x)-u(y)| \le C(p,q) |x-y|^{1-\frac{1}{p}} ||u'||_{L^{p,q}((x,y))},
\end{equation}
where
\begin{equation}
\label{Cpq is Lp'q' norm of 01}
C(p,q)=\left\{ \begin{array}{ll}
1 & \mbox{ if $q=1$}\\
\left(\frac{p'}{q'}\right)^{\frac{1}{q'}} & \mbox {if $1<q \le \infty.$}
\end{array}
\right.
\end{equation}

\end{Proposition}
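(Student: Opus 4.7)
The plan is to combine the fundamental theorem of calculus with the H\"{o}lder inequality for Lorentz spaces (Theorem \ref{Holder for Lorentz}). Since $u \in C^{1}(\Omega)$ and $[x,y] \subset \Omega$, I would first write
$$|u(x)-u(y)| = \left| \int_{x}^{y} u'(t)\,dt \right| \le \int_{x}^{y} |u'(t)| \, \chi_{(x,y)}(t)\,dt,$$
and then apply Theorem \ref{Holder for Lorentz} on the open interval $(x,y)$ with $f=u'$ and $g=\chi_{(x,y)}$ to obtain
$$|u(x)-u(y)| \le \|u'\|_{L^{p,q}((x,y))} \, \|\chi_{(x,y)}\|_{L^{p',q'}((x,y))}.$$

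All that then remains is an exact evaluation of $\|\chi_{(x,y)}\|_{L^{p',q'}((x,y))}$, which is straightforward since $\chi_{(x,y)}^{*}=\chi_{[0,y-x)}$. In the case $q=1$, so $q'=\infty$, I would compute
$$\|\chi_{(x,y)}\|_{L^{p',\infty}((x,y))} = \sup_{0<t<y-x} t^{1/p'} = (y-x)^{1-\frac{1}{p}},$$
which yields the constant $C(p,1)=1$. In the case $1<q\le \infty$, so $1 \le q'<\infty$, a direct integration gives
$$\|\chi_{(x,y)}\|_{L^{p',q'}((x,y))} = \left( \int_{0}^{y-x} t^{q'/p'-1}\,dt \right)^{1/q'} = \left(\frac{p'}{q'}\right)^{1/q'} (y-x)^{1-\frac{1}{p}},$$
which yields the constant $C(p,q)=(p'/q')^{1/q'}$ (and specializes correctly to $p'$ when $q=\infty$, i.e.\ $q'=1$).

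Combining the H\"{o}lder step with either evaluation of $\|\chi_{(x,y)}\|_{L^{p',q'}((x,y))}$ produces the inequality in the statement with the piecewise constant given in \eqref{Cpq is Lp'q' norm of 01}. I do not anticipate a real obstacle here: the argument is essentially a one-variable fundamental theorem of calculus followed by H\"{o}lder, and the only care required is in splitting the computation of the Lorentz (quasi)norm of the characteristic function into the two cases $q'=\infty$ and $q'<\infty$ so that the constant in the final bound matches the piecewise definition given in \eqref{Cpq is Lp'q' norm of 01}.
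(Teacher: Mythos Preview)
Your proposal is correct and follows essentially the same route as the paper: the fundamental theorem of calculus followed by the H\"{o}lder inequality for Lorentz spaces (Theorem \ref{Holder for Lorentz}), after which the constant comes from the $L^{p',q'}$-norm of the characteristic function of an interval. The only difference is cosmetic: the paper rescales to $(0,1)$ and asserts $\|1\|_{L^{p',q'}((0,1))}=C(p,q)$, whereas you carry out the explicit computation of $\|\chi_{(x,y)}\|_{L^{p',q'}((x,y))}$ in the two cases $q'=\infty$ and $q'<\infty$.
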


\begin{proof} Let $x, y \in \Omega$ with $x<y$ and $u \in C^{1}(\Omega).$
Then $u(y)-u(x)=\int_{x}^{y} u'(t) dt.$ By taking absolute values on both sides and using Theorem \ref{Holder for Lorentz}, we obtain
\begin{eqnarray*}
|u(x)-u(y)| &\le& \int_{x}^{y} |u'(t)| dt = ||u'||_{L^{1}((x,y))} \le ||u'||_{L^{p,q}((x,y))} ||1||_{L^{p',q'}((x,y))}\\
&=& |x-y|^{1-\frac{1}{p}} ||1||_{L^{p',q'}((0,1))} ||u'||_{L^{p,q}((x,y))}.
\end{eqnarray*}
This finishes the proof of the theorem, since $||1||_{L^{p',q'}((0,1))}=C(p,q),$ the constant defined in (\ref{Cpq is Lp'q' norm of 01}).

\end{proof}

\begin{Definition}
We say that the function $\overline{u}$ defined on $\Omega$ is a version of a given function $u$ on $\Omega$ if $u=\overline{u}$ a.e. in $\Omega.$
\end{Definition}

Now we prove (among other things) that if $n=1$ and $\Omega \subset {\mathbf{R}}$ is an open interval, then all the functions in $H_{0}^{1,(p,q)}(\Omega)$ and in $W^{1,(p,q)}(\Omega)$ have representatives that are H\"{o}lder continuous in $\overline{\Omega}$ with exponent $1-\frac{1}{p}.$

\begin{Theorem} \label{Holder 1/p' continuity for u in W1pq n equal 1}
Suppose $n=1<p<\infty$ and $1 \le q \le \infty.$ Let $\Omega \subset {\mathbf{R}}$ be an open set. Let $C(p,q)$ be the constant from {\rm{(\ref{Cpq is Lp'q' norm of 01})}}.

\par {\rm(i)} Suppose that $\Omega$ is an interval. If $u \in W^{1,(p,q)}(\Omega),$ then there exists a version $\overline{u} \in C(\overline{\Omega})$ that is H\"{o}lder continuous in $\overline{\Omega}$ with exponent $1-\frac{1}{p}$ and
$$[\overline{u}]_{0, 1-\frac{1}{p}; \overline{\Omega}} \le C(p,q) ||u'||_{L^{p,q}(\Omega)}.$$

\par {\rm(ii)} If $u \in W_{loc}^{1,(p,q)}(\Omega),$ then there exists a version $\overline{u} \in C(\Omega)$ that is locally H\"{o}lder continuous in $\Omega$ with exponent $1-\frac{1}{p}$ and
$$[\overline{u}]_{0, 1-\frac{1}{p}; \overline{\Omega'}} \le C(p,q) ||u'||_{L^{p,q}(\Omega')},$$
whenever $\Omega'$ is an open subinterval of $\Omega$ such that $\Omega' \subset \subset \Omega.$
Moreover, if $u' \in L^{(p,q)}(\Omega)$ and $\Omega$ is an interval, then $\overline{u}$ is H\"{o}lder continuous in $\overline{\Omega}$ with exponent $1-\frac{1}{p}$ and
$$[\overline{u}]_{0, 1-\frac{1}{p}; \overline{\Omega}} \le C(p,q) ||u'||_{L^{p,q}(\Omega)}.$$

\par {\rm(iii)} If $u \in H_{0}^{1,(p,q)}(\Omega),$ then there exists a version $\overline{u} \in C(\overline{\Omega})$ that is H\"{o}lder continuous in $\overline{\Omega}$ with exponent $1-\frac{1}{p}$ and
$$[\overline{u}]_{0, 1-\frac{1}{p}; \overline{\Omega}} \le C(p,q) ||u'||_{L^{p,q}(\Omega)}.$$

\end{Theorem}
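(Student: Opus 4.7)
The strategy is to reduce all three claims to a single computation: the fundamental theorem of calculus combined with the Hölder inequality for Lorentz spaces (Theorem \ref{Holder for Lorentz}), exactly as in the proof of Proposition \ref{Holder 1/p' continuity for u in C1R n equal 1}, but applied to the distributional derivative rather than to a classical derivative. The essential preliminary fact I would establish is this: if $I \subset \mathbf{R}$ is an open interval and $u \in W^{1,(p,q)}(I)$, then $u$ has an absolutely continuous representative $\overline{u}$ on $I$ satisfying $\overline{u}(y) - \overline{u}(x) = \int_x^y u'(t)\,dt$ for every $x,y \in I$ with $x<y$. This follows from the inclusion chain $W^{1,(p,q)}(I) \subset H_{loc}^{1,s}(I) \subset W_{loc}^{1,1}(I)$ (for any $s \in (1,p)$, via Theorem \ref{W1pqloc included in H1sloc s<p}(i) and the fact that $L^{s}_{loc}\subset L^{1}_{loc}$) together with the classical characterization of one-dimensional Sobolev functions as absolutely continuous functions (see e.g.\ Evans \cite{Eva} or Ziemer \cite{Zie}).

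Given $\overline{u}$, the Hölder inequality for Lorentz spaces applied to $u'\chi_{(x,y)} \in L^{p,q}(I)$ and $\chi_{(x,y)} \in L^{p',q'}(I)$ yields
\[
|\overline{u}(y) - \overline{u}(x)| \le \int_{x}^{y} |u'(t)|\,dt \le ||u'||_{L^{p,q}((x,y))} \, ||\chi_{(x,y)}||_{L^{p',q'}((x,y))} = C(p,q)\, |x-y|^{1-\frac{1}{p}}\, ||u'||_{L^{p,q}((x,y))}
\]
for every $x<y$ in $I$, where the last equality uses $||\chi_{(x,y)}||_{L^{p',q'}} = |x-y|^{1/p'} ||\chi_{(0,1)}||_{L^{p',q'}} = C(p,q)|x-y|^{1/p'}$. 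This is the key estimate. Part (i) follows at once by taking $I = \Omega$ and observing that the estimate forces $\overline{u}$ to be uniformly continuous on $\Omega$, hence to extend continuously to $\overline{\Omega}$ with the same Hölder seminorm. Part (ii) follows by applying the same argument on each open subinterval $\Omega' \subset \subset \Omega$; when additionally $u' \in L^{(p,q)}(\Omega)$ and $\Omega$ is itself an interval, the global estimate extends $\overline{u}$ continuously to $\overline{\Omega}$ with the claimed bound.

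For part (iii) I would invoke the extension-by-zero construction of Proposition \ref{Extension by zero in H01pq}: given $u \in H_{0}^{1,(p,q)}(\Omega)$, its zero extension $\widetilde{u}$ belongs to $H_{0}^{1,(p,q)}(\mathbf{R}) \subset W^{1,(p,q)}(\mathbf{R})$ (using Theorem \ref{H included in W}), and an approximation by functions in $C_{0}^{\infty}(\Omega)$ readily gives $\widetilde{u}\,' = u'\chi_{\Omega}$ in the distributional sense, so $||\widetilde{u}\,'||_{L^{p,q}(\mathbf{R})} = ||u'||_{L^{p,q}(\Omega)}$. Applying part (i) with the interval $I = \mathbf{R}$ produces a continuous version $\overline{\widetilde{u}}$ of $\widetilde{u}$ that is Hölder continuous with exponent $1-\frac{1}{p}$ on $\mathbf{R}$ and the required seminorm bound. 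Since $\widetilde{u} \equiv 0$ on the open set $\mathbf{R}\setminus \overline{\Omega}$, continuity of $\overline{\widetilde{u}}$ forces $\overline{\widetilde{u}} \equiv 0$ on $\mathbf{R}\setminus\Omega$; the restriction $\overline{u} := \overline{\widetilde{u}}\big|_{\overline{\Omega}}$ is then the sought-after representative.

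The only real obstacle is the invocation of the absolutely continuous representative for $W_{loc}^{1,1}(I)$ on an interval; this is the crucial one-dimensional ingredient that makes the theorem possible and has no analogue in higher dimensions. Once that classical fact is cited, the proof is a clean bookkeeping of the Hölder estimate combined with either a continuous extension to $\overline{\Omega}$ or a zero extension to $\mathbf{R}$, depending on the case.
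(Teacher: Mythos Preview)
Your proposal is correct and reaches the same conclusions, but the tactical route to the key pointwise inequality differs from the paper's. The paper obtains $|\overline{u}(x)-\overline{u}(y)|\le \int_x^y |u'(t)|\,dt$ by first reducing to $u$ compactly supported, choosing an approximating sequence $u_k\in C_0^\infty(\Omega)$ converging in $H_0^{1,s}(\Omega)$ for some $s\in(1,p)$, applying Proposition \ref{Holder 1/p' continuity for u in C1R n equal 1} to each $u_k$, and passing to the limit via the uniform convergence $u_k\to u$ that the one-dimensional Morrey embedding for $H_0^{1,s}$ provides. You bypass this approximation entirely by citing the classical fact that a $W^{1,1}_{loc}$ function on an interval has an absolutely continuous representative, which hands you the integral identity directly; the H\"older inequality for Lorentz spaces then finishes as in the paper. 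Your route is shorter and sidesteps a step in the paper's argument that is phrased a bit loosely (the paper speaks of $u_k'\to u'$ in $L^{(p,q)}$, though only convergence in $L^s$ was arranged; in fact only $L^1$ convergence of $u_k'$ is needed there). For part (iii) the paper extends $u$ by zero to the smallest open interval containing $\Omega$, while you extend all the way to $\mathbf{R}$; both choices work, and the restriction/seminorm bookkeeping you describe is correct.
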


\begin{proof} From Theorem \ref{W1pqloc included in H1sloc s<p} we have $W_{loc}^{1,(p,q)}(\Omega) \subset H_{loc}^{1,s}(\Omega)$ for every $1<s<p.$ Hence, it follows via Evans \cite[p.\ 290, Exercise 6]{Eva} that $u$ has a version $\overline{u} \in C(\Omega)$ that is locally H\"{o}lder continuous in
$\Omega$ with exponent $1-\frac{1}{s}.$ Without loss of generality we can assume that $u$ is itself continuous in $\Omega \subset {\mathbf{R}}.$

For both (i) and (ii) we prove first that
$$|u(x)-u(y)| \le C(p,q) |x-y|^{1-\frac{1}{p}} ||u'||_{L^{p,q}((x,y))}$$
whenever $x$ and $y$ are two points in $\Omega$ such that $x<y$ and $(x,y) \subset \Omega.$ Here $C(p,q)$ is the constant from (\ref{Cpq is Lp'q' norm of 01}).

The function $u$ is assumed to be continuous in $\Omega$ and the above pointwise inequality is local; thus, in order to prove it, we can assume without loss of generality for both (i) and (ii) that $\Omega \subset {\mathbf{R}}$ is a bounded open interval and that $u$ is compactly supported in $\Omega.$

From Theorem \ref{W1pqloc included in H1sloc s<p} it follows (since $\Omega$ is assumed to be bounded) that $u \in H_{0}^{1,s}(\Omega)$ for every $s \in (1,p).$ Fix such an $s \in (1,p).$ Let $u_k, k \ge 1$ be a sequence in $C_{0}^{\infty}(\Omega)$ converging to $u \in H_{0}^{1,s}(\Omega).$

Since $H_{0}^{1,(p,q)}(\Omega) \subset H_{0}^{1,s}(\Omega),$ $u$ is continuous in $\Omega$ and $n=1<p<\infty,$ it follows immediately that $u_k$ in fact converges to $u$ uniformly on $\Omega.$

The pointwise and uniform convergence of $u_k$ to $u$ on $\Omega,$ the fact that $u_k'$ converges to $u'$ in $L^{(p,q)}(\Omega)$ and the fact that (\ref{Holder 1/p' continuity estimate for u in C1R n equal 1}) holds for every $k \ge 1$ and for all $x,y \in \Omega$ with $x<y$ imply immediately by passing to the limit in
(\ref{Holder 1/p' continuity estimate for u in C1R n equal 1}) that
\begin{eqnarray*}
|u(x)-u(y)| &\le& \int_{x}^{y} |u'(t)| dt = ||u'||_{L^{1}((x,y))} \\
&\le& C(p,q) |x-y|^{1-\frac{1}{p}} ||u'||_{L^{p,q}((x,y))} \\
&\le& C(p,q) |x-y|^{1-\frac{1}{p}} ||u'||_{L^{p,q}(\Omega)}
\end{eqnarray*}
for all $x, y \in \Omega$ with $x<y.$ Here $C(p,q)$ is the constant
from (\ref{Cpq is Lp'q' norm of 01}).
This finishes the proof of the desired pointwise inequality.

\vskip 2mm

Now we prove claim (i). Since $u' \in L^{(p,q)}(\Omega),$ the above pointwise inequality implies that
$$|u(x)-u(y)| \le C(p,q) |x-y|^{1-\frac{1}{p}} ||u'||_{L^{p,q}(\Omega)}$$
for all $x, y \in \Omega$ with $x<y.$ In particular, $u$ is uniformly continuous on $\Omega.$

We claim that $u$ admits a continuous extension to $\overline{\Omega}.$ This is obvious when $\Omega={\mathbf{R}},$ so we can assume without loss of generality that $\Omega \neq {\mathbf{R}}.$

If $\Omega=(a,b)$ is a bounded interval, then it follows that $u$ is bounded on $\Omega$ and uniformly continuous on $\Omega,$ so in this case we can indeed extend $u$ continuously to $\overline{\Omega}=[a,b].$ We denote the extension to $\overline{\Omega}=[a,b]$ by $u$ as well.

If $\Omega \neq {\mathbf{R}}$ is an unbounded interval, then $\Omega$ is either $(a, \infty)$ or $(-\infty, a)$ for some $a \in {\mathbf{R}}.$ In both situations, $u$ is uniformly continuous on $\Omega$ and bounded near $x=a,$ so in this case we can also extend $u$ continuously to the unbounded set $\overline{\Omega}=\Omega \cup \{a\}.$ We denote this continuous extension to $\overline{\Omega}$ by $u$ as well.

The above pointwise inequality and the continuity of $u$ on $\overline{\Omega}$ imply that
$$|u(x)-u(y)| \le C(p,q) |x-y|^{1-\frac{1}{p}} ||u'||_{L^{p,q}(\Omega)}$$
for all $x, y \in \overline{\Omega}$ with $x<y.$ This finishes the proof of claim (i).

\vskip 2mm

Now we prove claim (ii). The first part of claim (ii) follows immediately from (i).

Assume now that $u \in W_{loc}^{1,(p,q)}(\Omega),$ $\Omega$ is an interval and $u' \in L^{(p,q)}(\Omega).$ By mimicking the argument from the proof of claim (i), we see that $u$ admits a continuous extension to $\overline{\Omega}.$ If we denote that extension by $u$ as well, we see that
$$|u(x)-u(y)| \le C(p,q) |x-y|^{1-\frac{1}{p}} ||u'||_{L^{p,q}(\Omega)}$$
for all $x, y \in \overline{\Omega}$ with $x<y.$ This finishes the proof of claim (ii).

Now we prove claim (iii). If $\Omega \subset {\mathbf{R}}$ is an interval, then claim (iii) follows obviously from (i).

Suppose now that $\Omega$ is not an interval. Let $U$ be the smallest open interval containing $\Omega.$ That is, $U=(a,b),$ where
$$a=\inf_{x \in \Omega} x \mbox{ and } b=\sup_{x \in \Omega} x.$$
Let $\widetilde{u}$ be the extension of $u$ by $0$ to $U.$ Then $\widetilde{u} \in H_{0}^{1,(p,q)}(U)$ via Proposition \ref{Extension by zero in H01pq}. Thus, claim (iii) holds for $\widetilde{u} \in H_{0}^{1,(p,q)}(U).$

We notice that $||\widetilde{u}'||_{L^{p,q}(U)}=||u'||_{L^{p,q}(\Omega)}.$ Moreover, it is easy to see that $u$ is in $C^{0,1-\frac{1}{p}}(\overline{\Omega})$ if and only if $\widetilde{u}$ is in $C^{0,1-\frac{1}{p}}(\overline{U})$ with
$$[\widetilde{u}]_{0, 1-\frac{1}{p}; \overline{U}}=[u]_{0, 1-\frac{1}{p}; \overline{\Omega}}.$$
Thus, claim (iii) holds also for $u \in H_{0}^{1,(p,q)}(\Omega)$ when $\Omega$ is not an interval. This finishes the proof of the theorem.

\end{proof}

Now we prove (among other things) that if $1<n<p<\infty$ and $1 \le q \le \infty,$ then the spaces $H_{0}^{1,(p,q)}(\Omega)$ and $C_{0}(\Omega) \cap W^{1,(p,q)}(\Omega)$ embed into $C^{0, 1-\frac{n}{p}}(\overline{\Omega}).$ Since we work with functions in $H_{0}^{1,(p,q)}(\Omega)$ and in $C_{0}(\Omega) \cap W^{1,(p,q)}(\Omega),$ no regularity assumptions on $\partial \Omega$ are needed.
This extends the Morrey embedding theorem to the Sobolev-Lorentz spaces in the Euclidean setting. We prove this theorem by relying on the well-known Poincar\'{e} inequality in the Euclidean setting and by invoking the classical Morrey embedding theorem for $1<n<s<p<\infty,$ proved by Evans in \cite{Eva} and by Gilbarg-Trudinger in \cite{GT}. Theorem \ref{Morrey embedding 1<n<p} (i) was also obtained via a different proof by Cianchi-Pick in \cite{CiPi2}. (See Cianchi-Pick \cite[Theorem 1.3]{CiPi2}).

\begin{Theorem} \label{Morrey embedding 1<n<p}
Suppose $1<n<p<\infty$ and $1 \le q \le \infty.$ Let $\Omega \subset {\mathbf{R}}^n$ be open.

\par {\rm(i)} If $u \in W^{1,(p,q)}(\Omega)$ is compactly supported in $\Omega,$
then $u$ has a version $\overline{u} \in C^{0,1-\frac{n}{p}}(\overline{\Omega})$ and
\begin{equation} \label{Holder 1-n/p estimate Morrey embedding 1<n<p}
[\overline{u}]_{0, 1-\frac{n}{p}; \overline{\Omega}} \le C(n,p,q) ||\nabla u||_{L^{p,q}(\Omega; {\mathbf{R}}^n)},
\end{equation}
where $C(n,p,q)>0$ is a constant that depends only on $n,p,q.$

\par {\rm(ii)} If $u \in W^{1,(p,q)}_{loc}(\Omega),$ then $u$ has a version $\overline{u}$ that is locally H\"{o}lder continuous in $\Omega$ with exponent $1-\frac{n}{p}.$

\par {\rm(iii)} If $u \in W^{1,(p,q)}_{loc}({\mathbf{R}}^n)$ and $|\nabla u| \in L^{(p,q)}({\mathbf{R}}^n),$
then $u$ has a version $\overline{u} \in C^{0,1-\frac{n}{p}}({\mathbf{R}}^n)$ and
$$[\overline{u}]_{0, 1-\frac{n}{p}; {\mathbf{R}}^n} \le C(n,p,q) ||\nabla u||_{L^{p,q}({\mathbf{R}}^n; {\mathbf{R}}^n)},$$
where $C(n,p,q)$ is the constant from {\rm(\ref{Holder 1-n/p estimate Morrey embedding 1<n<p})}.

\par {\rm(iv)} If $u \in H_{0}^{1,(p,q)}(\Omega),$ then $u$ has a version $\overline{u} \in C^{0,1-\frac{n}{p}}(\overline{\Omega})$ and
$$[\overline{u}]_{0, 1-\frac{n}{p}; \overline{\Omega}} \le C(n,p,q) ||\nabla u||_{L^{p,q}(\Omega; {\mathbf{R}}^n)},$$
where $C(n,p,q)$ is the constant from {\rm(\ref{Holder 1-n/p estimate Morrey embedding 1<n<p})}.

\end{Theorem}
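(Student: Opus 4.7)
The plan is to reduce every part of this theorem to a single pointwise Morrey--Poincar\'{e} type estimate for smooth functions, formulated directly in the Lorentz scale. Specifically, the key lemma I would establish first is: if $\varphi \in C^{\infty}(\mathbf{R}^n)$, $r>0$, and $B \subset \mathbf{R}^n$ is an open ball of radius $r$ containing two points $x, y$, then
$$|\varphi(x)-\varphi(y)| \le C(n,p,q) \, r^{1-\frac{n}{p}} \, \|\nabla \varphi\|_{L^{p,q}(B;\mathbf{R}^n)}.$$
To establish this I would start from the standard Poincar\'{e}-type representation $|\varphi(z)-\varphi_B| \le C(n) \int_B |\nabla \varphi(w)|\,|z-w|^{-(n-1)} \, dw$ valid for every $z \in B$, apply it both at $z=x$ and $z=y$, and control the right-hand side through the H\"older inequality for Lorentz spaces (Theorem \ref{Holder for Lorentz}):
$$\int_B \frac{|\nabla \varphi(w)|}{|z-w|^{n-1}} \, dw \le \|\nabla \varphi\|_{L^{p,q}(B;\mathbf{R}^n)} \cdot \bigl\| |z-\cdot|^{-(n-1)}\chi_B \bigr\|_{L^{p',q'}(B)}.$$
The second factor would then be evaluated explicitly by computing the nonincreasing rearrangement of $w \mapsto |z-w|^{-(n-1)}\chi_B(w)$, analogous in spirit to the computation in Proposition \ref{function in Lpinfty but not in Lpq q finite}: the rearrangement behaves like $(\Omega_n/t)^{(n-1)/n}$ on a bounded interval, and the $L^{p',q'}$-integral converges to $C(n,p,q) \, r^{1-n/p}$ precisely because $p>n$, with the cases $1\le q<\infty$ and $q=\infty$ handled separately as in Theorem \ref{Lpr stricly included in Lps}.

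For part (i), I would extend $u$ by zero to all of $\mathbf{R}^n$ to obtain a compactly supported element of $W^{1,(p,q)}(\mathbf{R}^n)$. Picking $s \in (n,p)$, Theorem \ref{W1pqloc included in H1sloc s<p} together with Corollary \ref{Coro Holder for Lorentz} places $u$ in $W^{1,s}(\mathbf{R}^n)=H^{1,s}(\mathbf{R}^n)$, so the classical Morrey theorem (Evans \cite{Eva}, Gilbarg--Trudinger \cite{GT}) produces a compactly supported continuous representative that I would identify with $u$. Then I would mollify $u_\varepsilon:=\eta_\varepsilon * u \in C_0^\infty(\mathbf{R}^n)$, which converges to $u$ uniformly on $\mathbf{R}^n$, apply the key pointwise estimate to each $u_\varepsilon$ on any ball $B$ of radius $|x-y|$ containing $x$ and $y$, and use Theorem \ref{properties of convolutions in Lorentz spaces}(i) to obtain the uniform bound $\|\nabla u_\varepsilon\|_{L^{p,q}(\mathbf{R}^n;\mathbf{R}^n)} \le C(p,q) \|\nabla u\|_{L^{p,q}(\mathbf{R}^n;\mathbf{R}^n)}$. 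Passing to the limit $\varepsilon \to 0$ yields (\ref{Holder 1-n/p estimate Morrey embedding 1<n<p}).

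Part (ii) would follow from (i) by multiplying $u$ by a cutoff $\phi \in C_0^\infty(\Omega)$ equal to $1$ on a prescribed precompact subset $\Omega' \subset\subset \Omega$: Lemma \ref{Product Rule for W1pq} applied locally ensures $u\phi \in W^{1,(p,q)}(\Omega)$ is compactly supported in $\Omega$, and applying (i) to $u\phi$ delivers a H\"older continuous representative on $\overline{\Omega'}$. For part (iii) I would repeat the mollification argument of (i), but localized: given $x,y \in \mathbf{R}^n$ with $r=|x-y|$, I would pick concentric balls $B=B((x+y)/2, r) \subset\subset B'$, mollify the restriction $u|_{B'}$ with a parameter $\varepsilon$ smaller than $\mathrm{dist}(\overline{B},\partial B')$, apply the key pointwise estimate on $B$ to the smooth mollifications, and invoke Theorem \ref{properties of convolutions in Lorentz spaces}(i) to bound their $L^{p,q}(B)$ gradient norms by $C(p,q)\|\nabla u\|_{L^{p,q}(\mathbf{R}^n;\mathbf{R}^n)}$. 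Letting $\varepsilon \to 0$ and using the continuity of $u$ granted by (ii) finishes (iii). Finally, part (iv) reduces to (iii) directly: extend $u$ by zero to $\mathbf{R}^n$, invoke Proposition \ref{Extension by zero in H01pq} to place the extension $\widetilde u$ in $H_0^{1,(p,q)}(\mathbf{R}^n) \subset W^{1,(p,q)}(\mathbf{R}^n)$ with $|\nabla \widetilde u| \in L^{(p,q)}(\mathbf{R}^n)$, apply (iii) to obtain a H\"older continuous representative of $\widetilde u$ on $\mathbf{R}^n$, and restrict to $\overline{\Omega}$.

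The main technical obstacle I anticipate is the explicit computation of $\| |z-\cdot|^{-(n-1)}\chi_B\|_{L^{p',q'}(B)}$, where the cases $1 \le q<\infty$ and $q=\infty$ must be handled separately and both must produce exactly the scaling $r^{1-n/p}$; a secondary delicate point is propagating the pointwise H\"older estimate through the mollification limit while preserving the sharp dependence on $\|\nabla u\|_{L^{p,q}}$, for which the Lorentz convolution bound in Theorem \ref{properties of convolutions in Lorentz spaces}(i) is indispensable.
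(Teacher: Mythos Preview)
Your proposal is correct and takes a genuinely different route for the core pointwise estimate in part (i). You rely on the Riesz potential representation $|\varphi(z)-\varphi_B| \le C(n)\int_B |\nabla\varphi(w)|\,|z-w|^{-(n-1)}\,dw$, followed by the Lorentz H\"older inequality and an explicit computation of $\| |z-\cdot|^{-(n-1)}\chi_B\|_{L^{p',q'}}$; you then propagate this to non-smooth $u$ via mollification and the convolution bound from Theorem \ref{properties of convolutions in Lorentz spaces}(i). The paper instead works directly with the (continuous representative of the) extended function $v$, using a telescoping chain of ball averages $v_{B_{x,j}}$ together with the $(1,1)$-Poincar\'e inequality on each ball, and only then applies the Lorentz H\"older inequality to bound $\frac{1}{|B|}\int_B |\nabla v|$ by $C(n,p,q)|B|^{-1/p}\|\nabla v\|_{L^{p,q}(B)}$. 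Your approach is more direct once the Riesz kernel norm is computed, but requires that computation (with separate cases $q'<\infty$ and $q'=\infty$) and the detour through mollification; the paper's chain-of-averages argument avoids both, needing only the elementary Poincar\'e inequality and the trivial Lorentz norm of a characteristic function, and applies immediately to any $W^{1,1}_{loc}$ function once continuity is in hand. For parts (ii)--(iv) your reductions (cutoff for (ii), localized estimate for (iii), extension by zero via Proposition \ref{Extension by zero in H01pq} for (iv)) are essentially the same as the paper's.
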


\begin{proof} Let $s \in (n,p)$ be fixed. We have via Gilbarg-Trudinger \cite[Theorem 7.17]{GT} and via Theorem \ref{W1pqloc included in H1sloc s<p} that $W^{1,(p,q)}_{loc}(\Omega)$ embeds into the space of locally H\"{o}lder continuous functions in $\Omega$ with exponent $1-\frac{n}{s}.$ Thus, if $u \in W^{1,(p,q)}_{loc}(\Omega)$ with $1<n<p<\infty,$ we can assume without loss of generality throughout the proof of this theorem (after possibly redefining $u$ on a subset of $\Omega$ of Lebesgue measure $0$) that $u$ is in fact locally H\"{o}lder continuous in $\Omega$ with exponent $1-\frac{n}{s}.$

We prove now (i). Suppose that $u \in W^{1,(p,q)}(\Omega)$ is compactly supported in $\Omega.$
Then we can assume without loss of generality that $\Omega$ is bounded. Since $u$ is compactly supported in $\Omega$ and $u$ is locally H\"{o}lder continuous in $\Omega$ with exponent $1-\frac{n}{s},$ it follows in fact that $u$ can be extended continuously by $0$ on $\partial \Omega$ and this extension (denoted by $u$ as well) is in fact H\"{o}lder continuous in $\overline{\Omega}$ with exponent $1-\frac{n}{s},$ where $1<n<s<p.$

We extend $u$ by $0$ to ${\mathbf{R}}^n \setminus \Omega$ and we denote this extension by $v.$ Since $u \in C_{0}(\Omega) \cap W^{1,(p,q)}(\Omega),$ it follows immediately from the definition of $v$ that $v \in C_{0}({\mathbf{R}}^n) \cap W^{1,(p,q)}({\mathbf{R}}^n)$ and
$$\nabla v(x)=\left\{\begin{array}{ll}
\nabla u(x) & \mbox{if $x \in \Omega$}\\
0 & \mbox{if $x \in {\mathbf{R}}^n \setminus \Omega$}.
\end{array}
\right.
$$
Moreover, since $u \in C^{0, 1-\frac{n}{s}}(\overline{\Omega}),$ it is easy to see that $v \in C^{0, 1-\frac{n}{s}}({\mathbf{R}}^n)$ with
$$[v]_{0, 1-\frac{n}{s}; {\mathbf{R}}^n}=[u]_{0, 1-\frac{n}{s}; \overline{\Omega}}.$$
It is also easy to see that $v \in C^{0, 1-\frac{n}{p}}({\mathbf{R}}^n)$ if and only if $u \in C^{0, 1-\frac{n}{p}}(\overline{\Omega})$ with
$$[v]_{0, 1-\frac{n}{p}; {\mathbf{R}}^n}=[u]_{0, 1-\frac{n}{p}; \overline{\Omega}}.$$

It is enough to show that $$[v]_{0, 1-\frac{n}{p}; {\mathbf{R}}^n} \le C(n,p,q) ||\nabla v||_{L^{p,q}({\mathbf{R}}^n;{\mathbf{R}}^n)}.$$

Let $x \neq y$ be two points from ${\mathbf{R}}^n$ and let $a$ be the midpoint of the segment connecting $x$ and $y.$ Let $R=|x-y|.$

For every integer $j \ge 0$ let $B_{x,j}=B(x, 2^{-j-1}R)$ and $B_{y,j}=B(y, 2^{-j-1}R).$ Let $B_{a}=B(a,R).$ It is easy to see that $B_{x,0} \cup B_{y,0} \subset B_a.$

Since $v$ is continuous in ${\mathbf{R}}^n,$ all the points in ${\mathbf{R}}^n$ are Lebesgue points for $v.$ Thus,
$$v(x)=\lim_{j \rightarrow \infty} v_{B_{x,j}} \mbox{ and } v(y)=\lim_{j \rightarrow \infty} v_{B_{y,j}}.$$
Hence
\begin{eqnarray*}
v(x)-v(y)&=&\left((v_{B_{x,0}}-v_{B_{a}})+\sum_{j=1}^{\infty} \frac{1}{|B_{x,j+1}|} \int_{B_{x, j+1}} (v(z)-v_{B_{x,j}}) \, dz\right)\\
& &-\left((v_{B_{y,0}}-v_{B_{a}})+\sum_{j=1}^{\infty} \frac{1}{|B_{y,j+1}|} \int_{B_{y, j+1}} (v(z)-v_{B_{y,j}}) \, dz\right).
\end{eqnarray*}
This implies
\begin{eqnarray*}
|v(x)-v(y)|&\le&\left(|v_{B_{x,0}}-v_{B_{a}}|+\sum_{j=1}^{\infty} \frac{1}{|B_{x,j+1}|} \int_{B_{x, j+1}} |v(z)-v_{B_{x,j}}| \, dz\right)\\
& &+\left(|v_{B_{y,0}}-v_{B_{a}}|+\sum_{j=1}^{\infty} \frac{1}{|B_{y,j+1}|} \int_{B_{y, j+1}} |v(z)-v_{B_{y,j}}| \, dz\right).
\end{eqnarray*}
Since $v \in W^{1,(p,q)}({\mathbf{R}}^n)$ is compactly supported in ${\mathbf{R}}^n,$ then via Theorem \ref{W1pqloc included in H1sloc s<p} we have $v \in H_{0}^{1,s}({\mathbf{R}}^n).$ Thus, via Poincar\'{e}'s inequality, we have
\begin{equation} \label{Poincare 11 inequality Rn}
\frac{1}{B(w,r)} \int_{B(w,r)} |v(z)-v_{B(w,r)}| dz \le C(n) r \frac{1}{B(w,r)} \int_{B(w,r)} |\nabla v(z)| dz
\end{equation}
for every $w \in {\mathbf{R}}^n$ and every $r>0,$ where $C(n)>0$ is a constant that depends only on $n.$

Since $B_{x,0} \subset B_{a},$ we have via H\"{o}lder's inequality for Lorentz spaces (see Theorem \ref{Holder for Lorentz}) and Poincar\'{e}'s inequality (\ref{Poincare 11 inequality Rn})
\begin{eqnarray*}
|v_{B_{x,0}}-v_{B_a}|&=&\frac{1}{|B_{x,0}|} \left| \int_{B_{x,0}} (v(z)-v_{B_a}) \, dz \right|\\
&\le&\frac{1}{|B_{x,0}|} \int_{B_{x,0}} |v(z)-v_{B_a}| \, dz\\
&\le& \frac{2^n}{|B_a|} \int_{B_{a}} |v(z)-v_{B_a}| \, dz\\
&\le& C(n) R \frac{1}{|B_a|} \int_{B_{a}} |\nabla v(z)| \, dz \\
&\le& C(n,p,q) R \left(\frac{||\nabla v||_{L^{p,q}(B_a; {\mathbf{R}}^n)}^p}{|B_a|}\right)^{1/p} \\
&=& C(n,p,q) R^{1-\frac{n}{p}} ||\nabla v||_{L^{p,q}(B_a; {\mathbf{R}}^n)}\\
&=& C(n,p,q) R^{1-\frac{n}{p}} ||\nabla u||_{L^{p,q}(\Omega \cap B_a; {\mathbf{R}}^n)}\\
&\le&C(n,p,q) R^{1-\frac{n}{p}} ||\nabla u||_{L^{p,q}(\Omega; {\mathbf{R}}^n)}\\
&=& C(n,p,q) R^{1-\frac{n}{p}} ||\nabla v||_{L^{p,q}({\mathbf{R}}^n; {\mathbf{R}}^n)}.
\end{eqnarray*}

Similarly, since $B_{y,0} \subset B_{a},$ we obtain (after an almost identical reasoning, by replacing $B_{x,0}$ with $B_{y,0}$)
\begin{eqnarray*}
|v_{B_{y,0}}-v_{B_a}| &\le& C(n,p,q) R^{1-\frac{n}{p}} ||\nabla u||_{L^{p,q}(\Omega \cap B_a; {\mathbf{R}}^n)}\\
&\le& C(n,p,q) R^{1-\frac{n}{p}} ||\nabla u||_{L^{p,q}(\Omega; {\mathbf{R}}^n)}.
\end{eqnarray*}

We want to obtain upper estimates for
\begin{eqnarray*}
|u_{B_{x,j+1}}-u_{B_{x,j}}| &=& \frac{1}{|B_{x,j+1}|} \left| \int_{B_{x,j+1}} (u(z)-u_{B_{x,j}}) \, dz  \right| \mbox{ and } \\
|u_{B_{y,j+1}}-u_{B_{y,j}}| &=& \frac{1}{|B_{y,j+1}|} \left| \int_{B_{y,j+1}} (u(z)-u_{B_{y,j}}) \, dz  \right|
\end{eqnarray*}
for all $j \ge 0.$

For all $j \ge 0$ we only do the estimate for $|u_{B_{x,j+1}}-u_{B_{x,j}}|$ because we would use an almost identical reasoning to obtain the estimate for $|u_{B_{x,j+1}}-u_{B_{x,j}}|.$

Since $B_{x,j+1} \subset B_{x,j} \subset B_a$ for all $j \ge 0,$ we have via H\"{o}lder's inequality for Lorentz spaces (see Theorem \ref{Holder for Lorentz}) and Poincar\'{e}'s inequality (\ref{Poincare 11 inequality Rn})
\begin{eqnarray*}
|v_{B_{x,j+1}}-v_{B_{x,j}}|&=&\frac{1}{|B_{x,j+1}|} \left| \int_{B_{x,j+1}} (v(z)-v_{B_{x,j}}) \, dz \right|\\
&\le& \frac{1}{|B_{x,j+1}|} \int_{B_{x,j+1}} |v(z)-v_{B_{x,j}}| \, dz\\
&\le& \frac{2^n}{|B_{x,j}|} \int_{B_{x,j}} |v(z)-v_{B_{x,j}}| \, dz\\
&\le& C(n) R \frac{1}{|B_{x,j}|} \int_{B_{x,j}} |\nabla v(z)| \, dz \\
&\le& C(n,p,q) 2^{-j}R \left(\frac{||\nabla v||_{L^{p,q}(B_{x,j}; {\mathbf{R}}^n)}^p}{|B_{x,j}|}\right)^{1/p} \\
&\le& C(n,p,q) 2^{-j}R \left(\frac{||\nabla v||_{L^{p,q}(B_a; {\mathbf{R}}^n)}^p}{|B_{x,j}|}\right)^{1/p} \\
&=& C(n,p,q) (2^{-j}R)^{1-\frac{n}{p}} ||\nabla v||_{L^{p,q}(B_a; {\mathbf{R}}^n)}\\
&\le& C(n,p,q) (2^{-j}R)^{1-\frac{n}{p}} ||\nabla v||_{L^{p,q}({\mathbf{R}}^n; {\mathbf{R}}^n)}.
\end{eqnarray*}

By summing the above inequalities and taking into account that $|x-y|=R,$ we have
\begin{eqnarray*}
|v(x)-v(y)| &\le& C(n,p,q) ||\nabla v||_{L^{p,q}(B_a; {\mathbf{R}}^n)} \left(\sum_{j=0}^{\infty} (2^{-j}R)^{1-\frac{n}{p}}\right)\\
&=& C(n,p,q) R^{1-\frac{n}{p}} ||\nabla v||_{L^{p,q}(B_a; {\mathbf{R}}^n)}\\
&=& C(n,p,q) |x-y|^{1-\frac{n}{p}} ||\nabla v||_{L^{p,q}(B_a; {\mathbf{R}}^n)}\\
&\le& C(n,p,q) |x-y|^{1-\frac{n}{p}} ||\nabla v||_{L^{p,q}({\mathbf{R}}^n; {\mathbf{R}}^n)}.
\end{eqnarray*}

Claim (i) holds with constant $C(n,p,q)$ from the last line in the above sequence of inequalities. This finishes the proof of claim (i).

 \vskip 2mm

 We prove now claim (ii). Let $\Omega' \subset \subset \Omega$ be an open subset of $\Omega$ and let $u \in W^{1,(p,q)}_{loc}(\Omega).$ We choose a cut-off function $\varphi \in C_{0}^{\infty}(\Omega)$ such that $0 \le \varphi \le 1$ and such that $\varphi=1$ in $\overline{\Omega'}.$ Then $u \varphi$ is compactly supported in
 $\Omega$ and via Lemma \ref{Product Rule for W1pq} we have $u \varphi \in W^{1,(p,q)}(\Omega).$
 From part (i) we have that $u \varphi$ is H\"{o}lder continuous in $\overline{\Omega}$ with exponent $1-\frac{n}{p}$ and
 $$[u\varphi]_{0, 1-\frac{n}{p}; \overline{\Omega}} \le C(n,p,q) ||\nabla(u \varphi)||_{L^{p,q}(\Omega; {\mathbf{R}}^n)},$$
 where $C(n,p,q)>0$ is the constant from (\ref{Holder 1-n/p estimate Morrey embedding 1<n<p}). Since $\Omega' \subset \subset \Omega$ and $\varphi=1$ in $\overline{\Omega'},$ it follows that $u \varphi=u$ in $\overline{\Omega'}.$ Thus, $u$ is H\"{o}lder continuous in $\overline{\Omega'}$ with exponent $1-\frac{n}{p}$ and
 \begin{eqnarray*}
 [u]_{0, 1-\frac{n}{p}; \overline{\Omega'}}&=&[u \varphi]_{0, 1-\frac{n}{p}; \overline{\Omega'}}
 \le [u \varphi]_{0, 1-\frac{n}{p}; \overline{\Omega}}\\
 &\le& C(n,p,q) ||\nabla(u \varphi)||_{L^{p,q}(\Omega; {\mathbf{R}}^n)},
 \end{eqnarray*}
 where $C(n,p,q)>0$ is the constant from (\ref{Holder 1-n/p estimate Morrey embedding 1<n<p}).
 This finishes the proof of (ii).

 \vskip 2mm

 We prove now claim (iii). We use the notation from part (i).

 Let $x \neq y$ be two points in ${\mathbf{R}}^n,$ let $a$ be the midpoint of the segment $[x,y]$ and let $R=|x-y|.$ Let $\varphi_{x,y} \in C_{0}^{\infty}({\mathbf{R}}^n)$ be a function such that $0 \le \varphi_{x,y} \le 1$ and such that $\varphi_{x,y}=1$ on $B_a:=B(a,R).$

 By running the argument from (i) with the function $u \varphi_{x,y} \in W^{1,(p,q)}(\Omega)$ that is compactly supported in ${\mathbf{R}}^n$ we obtain, since $\varphi_{x,y}=1$ on $B_{a} \ni \{x,y \}$
 \begin{eqnarray*}
 |u(x)-u(y)|&=&|(u \varphi_{x,y})(x)-(u \varphi_{x,y})(y)|\\
 &\le& C(n,p,q) ||\nabla (u \varphi_{x,y})||_{L^{p,q}(B_a; {\mathbf{R}}^n)}\\
 &=&C(n,p,q) ||\nabla u||_{L^{p,q}(B_a; {\mathbf{R}}^n)} \le C(n,p,q) ||\nabla u||_{L^{p,q}({\mathbf{R}}^n; {\mathbf{R}}^n)},
 \end{eqnarray*}
 where $C(n,p,q)$ is the constant from (\ref{Holder 1-n/p estimate Morrey embedding 1<n<p}) and from the last line of the last sequence of inequalities in the proof of claim (i). This finishes the proof of claim (iii).

 \vskip 2mm

 We prove now claim (iv). We have to consider the cases $\Omega={\mathbf{R}}^n$ and
 $\Omega \subsetneq {\mathbf{R}}^n$ separately.

 Suppose first that $\Omega={\mathbf{R}}^n.$ In this case the claim follows immediately from (iii)
 because the membership of $u$ in $H_{0}^{1, (p,q)}({\mathbf{R}}^n)$ implies that $u$ is in $W_{loc}^{1, (p,q)}({\mathbf{R}}^n)$ and $|\nabla u| \in L^{(p,q)}({\mathbf{R}}^n).$

 Suppose now that $\Omega \subsetneq {\mathbf{R}}^n.$ Let $v$ be the extension by $0$ of $u$ to ${\mathbf{R}}^n \setminus \Omega.$ We claim that $v$ is continuous in ${\mathbf{R}}^n.$

 Indeed, let $(u_k)_{k \ge 1} \subset C_{0}^{\infty}(\Omega)$ be a sequence of functions such that
 $u_k$ converges to $u$ in $H_{0}^{1,(p,q)}(\Omega)$ and pointwise almost everywhere in $\Omega.$ For every $k \ge 1$ let $v_k$ be the extension by $0$ of $u_k$ to ${\mathbf{R}}^n \setminus \Omega.$ We see immediately that $(v_k)_{k \ge 1} \subset C_{0}^{\infty}({\mathbf{R}}^n)$ and that $v_k$ converges to $v$ pointwise almost everywhere in ${\mathbf{R}}^n.$ Moreover, from Proposition \ref{Extension by zero in H01pq}, it follows that $v_k$ converges to $v$ in $H_{0}^{1,(p,q)}({\mathbf{R}^n}).$

 Since the sequence $(u_k)_{k \ge 1} \subset C_{0}^{\infty}(\Omega)$ converges to $u$ in $H_{0}^{1,(p,q)}(\Omega),$ $u$ is continuous in $\Omega,$ and $1<n<p<\infty,$ it follows immediately that in fact $u_k$ converges to $u$ uniformly on compact subsets of $\Omega.$ In particular, $u_k$ converges pointwise to $u$ everywhere in $\Omega.$

 From this, the definition of $v$ and of the functions $v_k$ and from the fact that $v_k=v=0$ everywhere on ${\mathbf{R}}^n \setminus \Omega$ for all $k \ge 1,$ it follows that the sequence $v_k$ converges pointwise to $v$ everywhere in ${\mathbf{R}}^n.$ Since the sequence $(v_k)_{k \ge 1} \subset C_{0}^{\infty}({\mathbf{R}}^n)$ converges to $v$ in $H_{0}^{1,(p,q)}({\mathbf{R}}^n)$ and pointwise in
 ${\mathbf{R}}^n$ and since $1<n<p<\infty,$ it follows immediately that in fact $v \in C({\mathbf{R}}^n)$ and the sequence $v_k$ converges to $v$ uniformly on compact subsets of ${\mathbf{R}}^n.$ Thus, we proved that $v$ is continuous in ${\mathbf{R}}^n.$ If we denote the extension of $u$ by $0$ to $\partial \Omega$ by $u$ as well, the above argument proved that $u \in C(\overline{\Omega}).$

 We see now that the claim (iv) holds for $v$ via (iii). Since $v \in C^{0, 1-\frac{n}{p}}({\mathbf{R}}^n),$ and $v$ is the continuous extension by $0$ of the function $u \in C(\overline{\Omega})$ to ${\mathbf{R}}^n,$ it follows immediately that $u \in C^{0, 1-\frac{n}{p}}(\overline{\Omega})$ and
 $$[u]_{0, 1-\frac{n}{p}; \overline{\Omega}}=[v]_{0, 1-\frac{n}{p}; {\mathbf{R}}^n}.$$
 This implies immediately that the claim (iv) holds for $u$ as well, because
 $$||\nabla u||_{L^{p,q}(\Omega; {\mathbf{R}}^n)}=||\nabla v||_{L^{p,q}({\mathbf{R}}^n; {\mathbf{R}}^n)}.$$
 This finishes the proof of the theorem.

\end{proof}

 Theorems \ref{Holder 1/p' continuity for u in W1pq n equal 1} and \ref{Morrey embedding 1<n<p} together with Proposition \ref{up in Lpinfty loc and in W1pinfty loc minus H1pinfty} yield the following corollary.

 \begin{Corollary}
 Suppose $1 \le n<p<\infty,$ where $n$ is an integer. Let
 $u_p: {\mathbf{R}}^n \rightarrow {\mathbf{R}},$ $u_p(x)=|x|^{1-\frac{n}{p}}.$
 Then $u_p$ is H\"{o}lder continuous in ${\mathbf{R}}^n$ with exponent $1-\frac{n}{p}.$
 \end{Corollary}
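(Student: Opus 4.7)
The plan is to reduce the Corollary directly to the machinery already assembled. Proposition \ref{up in Lpinfty loc and in W1pinfty loc minus H1pinfty} (taken with $q=\infty$) tells us exactly what we need about the regularity of $u_p$: namely, $u_p$ lies in $W^{1,(p,\infty)}_{loc}({\mathbf{R}}^n)$, and from the explicit computation in that proof we also have the global estimate
\begin{equation*}
\bigl\| |\nabla u_p| \bigr\|_{L^{p,\infty}({\mathbf{R}}^n)} = |C(n,p)|\,\Omega_n^{1/p} < \infty,
\end{equation*}
where $C(n,p)=1-\tfrac{n}{p}$ since $p\neq n$ in the regime $n<p$. In particular $|\nabla u_p|\in L^{(p,\infty)}({\mathbf{R}}^n)$. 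Also, because $1-\tfrac{n}{p}>0$ under the hypothesis $n<p$, the function $u_p(x)=|x|^{1-n/p}$ is continuous (indeed vanishing) at the origin and hence continuous on all of ${\mathbf{R}}^n$, so no passage to an a.e.\ version is required.

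Next I would split into two cases according to whether $n=1$ or $n\ge 2$, feeding the hypotheses above into the appropriate Morrey-type embedding proved earlier. For $n=1$ and $\Omega={\mathbf{R}}$, apply Theorem \ref{Holder 1/p' continuity for u in W1pq n equal 1}(ii) with $q=\infty$: since $u_p\in W^{1,(p,\infty)}_{loc}({\mathbf{R}})$ and $u_p'\in L^{(p,\infty)}({\mathbf{R}})$, the theorem provides a continuous representative that is H\"older continuous on $\overline{{\mathbf{R}}}={\mathbf{R}}$ with exponent $1-\tfrac{1}{p}=1-\tfrac{n}{p}$, and by continuity of $u_p$ this representative is $u_p$ itself. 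For $1<n<p<\infty$, apply Theorem \ref{Morrey embedding 1<n<p}(iii) with $q=\infty$: the same two hypotheses ($u_p\in W^{1,(p,\infty)}_{loc}({\mathbf{R}}^n)$ and $|\nabla u_p|\in L^{(p,\infty)}({\mathbf{R}}^n)$) yield a version in $C^{0,1-n/p}({\mathbf{R}}^n)$, which again is $u_p$ itself by continuity.

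There is essentially no obstacle to overcome here; the Corollary is a direct corollary in the literal sense. The one thing to be careful about is merely bookkeeping: one must verify that the $q=\infty$ versions of the two Morrey embeddings really do apply to $u_p$, and this is where Proposition \ref{up in Lpinfty loc and in W1pinfty loc minus H1pinfty} is essential, since for $q<\infty$ the function $|\nabla u_p|$ fails to be in $L^{(p,q)}(B(0,r))$ and the classical Sobolev embedding with exponent $p$ would not apply either. The point of the Corollary is precisely that the Lorentz-scale refinements proved in Theorems \ref{Holder 1/p' continuity for u in W1pq n equal 1} and \ref{Morrey embedding 1<n<p} are sharp enough to capture the borderline H\"older regularity of the model function $|x|^{1-n/p}$.
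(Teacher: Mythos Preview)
Your proposal is correct and follows essentially the same approach as the paper: both invoke Proposition \ref{up in Lpinfty loc and in W1pinfty loc minus H1pinfty} to place $u_p$ in $W^{1,(p,\infty)}_{loc}({\mathbf{R}}^n)$ with $|\nabla u_p|\in L^{(p,\infty)}({\mathbf{R}}^n)$, then split into the cases $n=1$ and $n>1$, applying Theorem \ref{Holder 1/p' continuity for u in W1pq n equal 1}(ii) and Theorem \ref{Morrey embedding 1<n<p}(iii) respectively with $q=\infty$. Your version is slightly more explicit about the global gradient bound and about why no passage to a version is needed, but the argument is identical in substance.
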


 \begin{proof} We proved in Proposition \ref{up in Lpinfty loc and in W1pinfty loc minus H1pinfty}
 that $u_p \in W_{loc}^{1,(p,\infty)}({\mathbf{R}}^n)$ for all $p \in (1, \infty).$ The claim follows immediately by invoking Theorem \ref{Holder 1/p' continuity for u in W1pq n equal 1} (ii) for $n=1$ and respectively Theorem \ref{Morrey embedding 1<n<p} (iii) for $n>1.$
 One can also see that the claim holds via a direct and easy computation.
 \end{proof}

 \vspace{2mm}

 \noindent {\bf{Acknowledgements.}} I started writing this article towards the end of my stay
 at the University of Trento and I finished it during my stay at the University of Pisa.
 I would like to thank Professor Giuseppe Buttazzo for his helpful suggestions.
 I would also like to thank the referees for their comments and for helping me improve the paper.

\end{document}